\newtheorem{theorem}{Theorem}[section]
\newtheorem{lemma}[theorem]{Lemma}
\newtheorem{proposition}[theorem]{Proposition}
\newtheorem{corollary}[theorem]{Corollary}
\theoremstyle{definition}
\newtheorem{definition}[theorem]{Definition}
\newtheorem{remark}[theorem]{Remark}
\newtheorem{convention and reminder}[theorem]{Convention and Reminder}
\newtheorem{convention and remark}[theorem]{Convention and Remark}
\newtheorem{definition and remark}[theorem]{Definition and Remark}
\newtheorem{reminders and definition}[theorem]{Reminders and Definition}
\newtheorem{notation and remarks}[theorem]{Notation and Remarks}
\newtheorem{notation and remark}[theorem]{Notation and Remark}
\newtheorem{notation and reminder}[theorem]{Notation and Reminder}
\newtheorem{example}[theorem]{Example}
\newtheorem{problem}[theorem]{Problem}
\newcommand\Spec{\operatorname{Spec}}
\newcommand\Proj{\operatorname{Proj}}
\newcommand\Rad{\operatorname{Rad}}
\newcommand\Img{\operatorname{im}}
\numberwithin{equation}{section}
\begin{document}

\title[Local Bezout estimates]{Local Bezout estimates and multiplicities of 
parameter and primary ideals}


\author[Bo\u{d}a]{Eduard Bo\u{d}a}

\address{Comenius University, 
Faculty of Mathematics, Physics and Informatics, 
SK --- 842 48 Brati\-slava, Slovakia}

\email{eduard.boda@fmph.uniba.sk}

\author[Schenzel]{Peter Schenzel}

\address{Martin-Luther-Universit\"at Halle-Wittenberg,
Institut f\"ur Informatik, D --- 06 099 Halle (Saale),
Germany}

\email{peter.schenzel@informatik.uni-halle.de}

\subjclass[2010]{Primary: 13H15, 13D40 ; Secondary: 14C17, 13D25}

\keywords{multiplicity, system of parameters, Rees ring, local 
	Bezout inequality, blowing up, Euler characteristic}
\thanks{The authors are grateful to the Slovakian Ministry of 
	Education (Grant No. 1/0730/09), DAAD and Martin-Luther University Halle-Wittenberg 
	for supporting this research.}

\begin{abstract} 
	Let $\mathfrak{q}$ denote an $\mathfrak{m}$-primary ideal of 
	a $d$-dimensional local ring $(A, \mathfrak{m}).$ Let $\underline{a} = a_1,\ldots,a_d 
	\subset \mathfrak{q}$ be a system of parameters. Then there is the following 
	inequality for the multiplicities 
	$c  \cdot e(\mathfrak{q};A)  \leq   e(\underline{a};A)$ 
	where $c$ denotes the product of the initial degrees of $a_i$ in the form ring 
	$G_A(\mathfrak{q}).$ The aim of the paper is a characterization of the equality as well 
	as a description of the difference by various homological methods via Koszul 
	homology. To this end we have to characterize when the sequence of initial 
	elements $\underline{a^{\star}} = a_1^{\star}, \ldots,a_d^{\star}$ is a 
	homogeneous system of parameters of $G_A(\mathfrak{q}).$ In the case of 
	$\dim A = 2$ this leads to results on the local Bezout inequality. 
	In particular, we give several equations for improving the classical 
	Bezout inequality to an equality. 
\end{abstract}

\maketitle
\section{Introduction}
Let $C, D \subset \mathbb{A}^2_k$ be  two affine plane curves with no 
components in common. Let $f, g \in k[x,y]$ denote their defining equations, 
i.e. $C = V(f)$ and $D = V(g).$ Suppose that $0 \in C \cap D.$ Let 
$A = k[x,y]_{(x,y)}$ denote the local ring at the origin. Then the local 
Bezout inequality in the plane says that 
\[
e(f,g;A)\geq c\cdot d,
\]
where $e(f,g;A) $ denotes the local intersection multiplicity 
and $c$ and $d$ denote the (initial) degrees of $C$ and $D$ respectively. 
Equality holds if and only if $C$ and $D$ intersect transversally in $0,$ i.e. 
if and only if the initial forms $f^{\star}, g^{\star} \in k[X,Y]$ form a homogeneous system 
of parameters in $k[X,Y].$ This is a classical result, see \cite{BK} or \cite{Fi} for 
references. 

One of the aims of the present paper is the following Theorem. 

\begin{theorem} \label{0.1} With the previous notation there are the 
following results: 
\begin{itemize}
\item[(a)] $e(f,g;A) = c\cdot d +t +\ell,$ where $t$ denotes the number of common 
tangents in $(0,0)$ counted with multiplicities and $\ell$ is a non-negative number defined 
in local data.
\item[(b)] $e(f,g;A) = c \cdot d + e(f_1,g_1;A[x/y]) + 
e(f_2,g_2;A[y/x])-e(f_1,g_1;A[x/y,y/x])$, where $f_i,g_i, i = 1,2,$ denote the 
corresponding strict transforms of $f,g$ in the blowing up rings $A[x/y]$ and  $A[y/x]$. 
\item[(c)] Suppose $C$ and $D$ do not intersect transversally in the origin. Then 
\[
e(f,g;A) \leq c \cdot d + e(f_1,g_1;A[x/y]) + e(f_2,g_2;A[y/x])
\] 
with equality if and only if one of the coordinate axes is a common tangent in $(0,0)$. 
\end{itemize}
\end{theorem}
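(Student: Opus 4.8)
The plan is to pass to the blow-up of $\Spec A$ at the closed point and to read all three assertions off from intersection theory on the resulting regular surface. Write $\pi\colon X'\to \Spec A$ for this blow-up; since $A$ is a two-dimensional regular local ring with $G_A(\mathfrak m)\cong k[X,Y]$, the exceptional fibre is $E=\Proj k[X,Y]\cong\mathbb P^1_k$ and $X'$ is again regular. First I would record the three standard facts on $X'$: the total transforms satisfy $\pi^{*}C=\tilde C+cE$ and $\pi^{*}D=\tilde D+dE$ (because $C,D$ have multiplicity $c,d$ at the origin); the strict transforms meet the exceptional line in $\tilde C\cdot E=c$ and $\tilde D\cdot E=d$ points counted with multiplicity, these being the zeros of the initial forms $f^{\star},g^{\star}$ on $E$; and $E\cdot E=-1$. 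Expanding $\pi^{*}C\cdot\pi^{*}D=(\tilde C+cE)(\tilde D+dE)$ and using that pulling back divisors preserves their local intersection number, so that $\pi^{*}C\cdot\pi^{*}D=e(f,g;A)$, yields the master formula
\[
e(f,g;A)=c\cdot d+\tilde C\cdot\tilde D ,
\]
where $\tilde C\cdot\tilde D=\sum_{P\in E}(\tilde C\cdot\tilde D)_{P}$ is the total intersection number of the strict transforms along the exceptional fibre. All three parts are then extracted from this identity. Homologically the same decomposition is the statement that the Euler characteristic of the Koszul complex $K_\bullet(f,g;A)$ splits into the leading term $cd=c\cdot e(\mathfrak m;A)$ and the contribution of the Koszul homology supported on $E$.

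For (b) I would cover $E$ by the two standard affine charts of the blow-up, $\Spec A[x/y]$ and $\Spec A[y/x]$, on which $C,D$ acquire the strict transforms $f_1,g_1$ and $f_2,g_2$. The first chart covers $E\setminus\{[1:0]\}$, the second $E\setminus\{[0:1]\}$, and their overlap $\Spec A[x/y,y/x]$ covers $E$ minus both coordinate directions $[1:0],[0:1]$. Summing the local intersection numbers over the fibre of the origin in each chart gives $e(f_1,g_1;A[x/y])=\sum_{P\neq[1:0]}(\tilde C\cdot\tilde D)_P$ and, symmetrically, the corresponding sums for the second chart and for the overlap. A one-line inclusion–exclusion over the three charts recovers $\sum_{P\in E}(\tilde C\cdot\tilde D)_P=\tilde C\cdot\tilde D$, and combined with the master formula this is exactly (b).

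Part (c) is then immediate from (b): the displayed inequality is the assertion that the overlap term $e(f_1,g_1;A[x/y,y/x])$ subtracted in (b) is non-negative, which holds because it is a sum of lengths. Equality holds precisely when this overlap term vanishes, i.e. when the strict transforms share no point over $E\setminus\{[1:0],[0:1]\}$. Since the two excluded points are exactly the coordinate directions, and the hypothesis that $C,D$ do not intersect transversally forces $f^{\star},g^{\star}$ to have a common zero on $E$, the vanishing of the overlap term is equivalent to the statement that a coordinate axis occurs among the common tangents while no common tangent carrying intersection of the strict transforms lies in the open part of $E$; this is the claimed geometric characterization.

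Finally, for (a) I would analyse $\tilde C\cdot\tilde D=\sum_{P}(\tilde C\cdot\tilde D)_P$ point by point. The strict transforms meet $E$ only over the common zeros of $f^{\star}$ and $g^{\star}$, that is, over the common tangents, and at such a point $P$ the contact orders $(\tilde C\cdot E)_P=m_P$ and $(\tilde D\cdot E)_P=n_P$ are the multiplicities with which the tangent occurs in $f^{\star}$ and $g^{\star}$. Setting $t=\sum_P\min(m_P,n_P)$, the number of common tangents counted with multiplicities, equivalently $\deg\gcd(f^{\star},g^{\star})$, and $\ell=\sum_P\big((\tilde C\cdot\tilde D)_P-\min(m_P,n_P)\big)$, the identity $e(f,g;A)=cd+t+\ell$ is formal once one knows $\ell\ge 0$. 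The main obstacle is therefore the local lower bound $(\tilde C\cdot\tilde D)_P\ge\min(m_P,n_P)$, relating the intersection multiplicity of the two strict transforms to their separate contact orders with $E$. I would prove it by working inside the two-dimensional regular local ring $\mathcal O_{X',P}$, writing $E=(t)$, $\tilde C=(\phi)$, $\tilde D=(\psi)$, and comparing $(\tilde C\cdot\tilde D)_P=\operatorname{length}\big(\mathcal O_{X',P}/(\phi,\psi)\big)$ with $m_P=\operatorname{ord}(\phi\bmod t)$ and $n_P=\operatorname{ord}(\psi\bmod t)$ in the discrete valuation ring $\mathcal O_{X',P}/(t)$; this is where both the positivity of $\ell$ and its interpretation as genuinely local data reside.
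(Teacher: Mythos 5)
Your proposal is correct, but it reaches the theorem by a genuinely different route than the paper. You work with classical intersection theory on the blown-up surface $X'\to\Spec A$: Noether's formula $e(f,g;A)=c\cdot d+\tilde C\cdot\tilde D$ (from $\pi^{*}C=\tilde C+cE$, $E\cdot E=-1$ and the projection formula), after which (b) is inclusion--exclusion of the points of $\tilde C\cap\tilde D\subset E$ over the two affine charts, (c) reads off when the subtracted overlap term vanishes, and (a) rests on the pointwise bound $(\tilde C\cdot\tilde D)_P\geq\min(m_P,n_P)$ --- which does hold, since locally $(\phi,\psi)\subseteq(\tau)+ \mathfrak{n}^{\min(m_P,n_P)}$ with $\mathfrak n$ generated by $\tau$ and a coordinate on $E$, so your sketched DVR comparison goes through. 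The paper never leaves commutative algebra: it writes $e(f,g;A)-c\cdot d$ as the asymptotic Euler characteristic of the graded pieces of the Koszul complex $K(\underline{aT};R_A(\mathfrak m))$ over the Rees ring (Theorem \ref{3.4}), computes that homology in dimension two as colon-ideal quotients (Theorem \ref{5.1}, Proposition \ref{5.2}), identifies $t=\ell_A([f^{\star}B:g^{\star}/f^{\star}B]_n)$ via the common factor $h=\gcd(f^{\star},g^{\star})$ to get (a) (Theorem \ref{6.1}); for (b) it tensors the Koszul complex with the \v{C}ech complex of $R_A(\mathfrak m)$ with respect to $xT,yT$ and uses vanishing of local cohomology in large degrees (Theorems \ref{7.1}, \ref{7.2}) --- the algebraic counterpart of your chart inclusion--exclusion; and for (c) it argues on $\Proj$: the overlap multiplicity vanishes iff $(G_A(\mathfrak m)/(f^{\star},g^{\star}))_{xTyT}=0$ iff $XY\in\Rad(hk[X,Y])$ (Theorem \ref{7.4}). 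Your route is more elementary granted the classical surface machinery, and it gives $\ell$ a transparent geometric meaning (the excess of $(\tilde C\cdot\tilde D)_P$ over the contact forced by common tangency), which is relevant to the paper's Problem \ref{6.2}; the paper's route, by contrast, applies to any $\mathfrak m$-primary $\mathfrak q$, any dimension, and quasi-unmixed non-regular rings (Theorems \ref{0.2}, \ref{0.3}), where no smooth exceptional $\mathbb{P}^1$ is available, and it produces $\ell$ as an explicitly computable algebraic length. Three details you should make explicit to be airtight: in expanding $\pi^{*}C\cdot\pi^{*}D$ the two total transforms share the component $E$, so the pairing must be taken in the bilinear-extension sense (equivalently, compute $\tilde C\cdot\pi^{*}D$ and push forward along the finite degree-one map $\tilde C\to C$); in (b) the paper's multiplicities (Remark \ref{7.0a} (B)) are Samuel multiplicities, which agree with your local colengths only because the local rings of $X'$ are regular, hence Cohen--Macaulay; and your equality criterion in (c), ``no common tangent direction lies in $E\setminus\{[1{:}0],[0{:}1]\}$,'' i.e.\ every common tangent is a coordinate axis, is exactly the condition $XY\in\Rad(hk[X,Y])$ that the paper's own proof establishes, so it matches the paper's proof even though the theorem's wording (``one of the coordinate axes is a common tangent'') is looser.
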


For the precise notion of $e$ we refer to  Remark \ref{7.0a}  (B).
The proof of Theorem \ref{0.1} is given in Theorems \ref{6.1},  \ref{7.2} and \ref{7.4}. 
The inequality  $e(f,g;A) \geq c\cdot d +t$ was proved by Byd\u{z}ovsk\'y (see 
\cite{By}) through the study of resultants. His result was one of the motivations 
for the investigations in the present paper. The formula in Theorem \ref{0.1} (b) was 
inspired by those of Greuel, Lossen and Shustin (see \cite[Proposition 3.21]{GLS}). 
In fact, we correct their formula by showing that it 
depends upon the embedding $C, D \subset \mathbb{A}^2_k$ in contrast to the claim in the proof 
of \cite[Proposition 3.21]{GLS}.

Another motivation for the authors was 
the paper \cite{Pr}. Let $\underline{a} = a_1,\ldots,a_d$ be a system of parameters 
in the local ring $(A,\mathfrak{m})$. Let $\mathfrak{q}$ denote an $\mathfrak{m}$-primary 
ideal with $(\underline{a}) \subseteq \mathfrak{q}$ and $a_i \in \mathfrak{q}^{c_i} 
\setminus \mathfrak{q}^{c_i+1}, i = 1,\ldots,d.$ Then $c_1\cdots c_d \; e(\mathfrak{q};A) 
\leq e(\underline{a};A)$ (see Lemma \ref{2.1}). In the case of $\mathfrak{q} = \mathfrak{m}$ 
it was claimed in \cite{Pr} that equality holds if and only if the sequence of initial 
elements $a_1^{\star},\ldots,a_d^{\star}$ forms an $G_A(\mathfrak{q})$-regular sequence. 
This is not true (see the Examples  \ref{2.2}).  Therefore we investigate the relation between both of these 
multiplicities. 
  
\begin{theorem} \label{0.2} Let $\mathfrak{q} \subset A$ denote an $\mathfrak{m}$-primary 
ideal. Let $\underline{a} = a_1,\ldots,a_d \subset \mathfrak{q}$ be a system of parameters and 
$a_i \in \mathfrak{q}^{c_i} \setminus \mathfrak{q}^{c_i+1}, i = 1,\ldots,d.$ 
\begin{itemize}
\item[(a)]  $e(\underline{a};A) = c_1\cdots c_d \, e(\mathfrak{q};A) + \chi(\underline{a},\mathfrak{q})$ 
for a certain  non-negative Euler characteristic $\chi(\underline{a},\mathfrak{q}).$ 
\item[(b)] If the sequence of initial  elements $\underline{a^{\star}} = a_1^{\star},\ldots,a_d^{\star}$ is a system 
of parameters in the form ring $G_A(\mathfrak{q}),$ then  $e(\underline{a};A) = c_1\cdots c_d \, e(\mathfrak{q};A).$ 
\item[(c)] The converse of the statement in (b) is true, provided $A$ is quasi-unmixed.
\end{itemize}
\end{theorem}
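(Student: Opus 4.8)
The plan is to compute $e(\underline{a};A)$ as the Euler--Poincar\'e characteristic of a Koszul complex and to compare it, through the $\mathfrak{q}$-adic filtration, with the corresponding invariant of the form ring $G := G_A(\mathfrak{q})$. Throughout I use two standard facts. First, since $\underline{a}$ is a system of parameters, Serre's formula gives
\[
e(\underline{a};A) \;=\; \chi\bigl(K_{\bullet}(\underline{a};A)\bigr) \;=\; \sum_{i\ge 0}(-1)^i\,\ell\bigl(H_i(\underline{a};A)\bigr).
\]
Second, the Hilbert--Samuel multiplicity of $\mathfrak{q}$ is the degree of the form ring, $e(\mathfrak{q};A)=e(G)$, so that writing $H_G(t)=Q(t)/(1-t)^d$ one has $Q(1)=e(\mathfrak{q};A)$ and $\dim G=d$. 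I then filter $K_{\bullet}(\underline{a};A)$ by placing the exterior generator dual to $a_i$ in filtration degree $c_i$ and imposing the $\mathfrak{q}$-adic filtration on coefficients; since $a_i\in\mathfrak{q}^{c_i}$ the differential is filtered, and by construction the associated graded complex is the graded Koszul complex $K_{\bullet}(\underline{a^{\star}};G)$ of the initial forms, whose terms satisfy
\[
\sum_{i\ge 0}(-1)^i\, H_{K_i(\underline{a^{\star}};G)}(t) \;=\; \prod_{i=1}^{d}\bigl(1-t^{c_i}\bigr)\,H_G(t) \;=\; \Bigl(\prod_{i=1}^{d}\tfrac{1-t^{c_i}}{1-t}\Bigr)Q(t).
\]

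For (b), assume $\underline{a^{\star}}$ is a homogeneous system of parameters of $G$. Then the modules $H_i(\underline{a^{\star}};G)$ have finite length, the displayed expression is a polynomial, and evaluating it at $t=1$ yields $\chi\bigl(K_{\bullet}(\underline{a^{\star}};G)\bigr)=c_1\cdots c_d\,Q(1)=c_1\cdots c_d\,e(\mathfrak{q};A)$. The spectral sequence of the filtered complex, with $E^1$-term $H_{\bullet}(\underline{a^{\star}};G)$ converging to $H_{\bullet}(\underline{a};A)$, preserves Euler characteristics in this finite-length situation, so $e(\underline{a};A)=\chi(K_{\bullet}(\underline{a};A))=\chi(K_{\bullet}(\underline{a^{\star}};G))=c_1\cdots c_d\,e(\mathfrak{q};A)$.

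For (a), I set $\chi(\underline{a},\mathfrak{q}):=e(\underline{a};A)-c_1\cdots c_d\,e(\mathfrak{q};A)$; its non-negativity is exactly Lemma \ref{2.1}. To display it as an Euler characteristic I reuse the filtration: the finite-length homologies $H_i(\underline{a};A)$ carry induced filtrations whose associated graded pieces are the $E^{\infty}$-subquotients of $H_i(\underline{a^{\star}};G)$, so that $\chi(\underline{a},\mathfrak{q})$ is read off from the finite-length contributions of the Koszul homology of $\underline{a^{\star}}$ on $G$ together with the spectral-sequence differentials recording the failure of $\underline{a^{\star}}$ to be a parameter sequence. Part (b) is precisely the case in which these contributions cancel.

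The genuinely hard part is the converse (c), where quasi-unmixedness is indispensable. I argue by contraposition: suppose $\underline{a^{\star}}$ is \emph{not} a homogeneous system of parameters, so $\dim G/(\underline{a^{\star}})\ge 1$. Since $A$ is quasi-unmixed (formally equidimensional), the form ring $G$ is equidimensional and catenary of dimension $d$; every minimal prime $P$ satisfies $\dim G/P=d$. The failure of the parameter condition then forces a minimal prime $P$ with $\dim\bigl(G/(P+(\underline{a^{\star}}))\bigr)\ge 1$, i.e.\ the initial forms fail to cut down dimension on a full-dimensional component. Feeding this into the associativity formula for multiplicities and comparing the resulting length count term by term with the Hilbert-series computation above produces a strictly positive contribution to $e(\underline{a};A)$ beyond $c_1\cdots c_d\,e(\mathfrak{q};A)$, so $\chi(\underline{a},\mathfrak{q})>0$. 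The main obstacle is upgrading ``non-negative'' to ``strictly positive'': without equidimensionality a lower-dimensional component could absorb the failure of the parameter condition while contributing nothing to the top-dimensional multiplicity, and it is exactly this loophole that the quasi-unmixed hypothesis closes. Making the strictness precise is where the equidimensional and catenary properties of $G$ enter, presumably through an equimultiplicity argument in the spirit of B\"oger's theorem, which identifies equality of multiplicities with the parameter and integral-dependence conditions.
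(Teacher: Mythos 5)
The decisive gap is in part (c), which you yourself call the genuinely hard part: your argument stops exactly where the work begins. From quasi-unmixedness you correctly get that $G = G_A(\mathfrak{q})$ is equidimensional and that failure of the parameter property yields a minimal prime $P$ of $G$ with $\dim G/(P+(\underline{a^{\star}}))\geq 1$; but the next step, that ``feeding this into the associativity formula \ldots\ produces a strictly positive contribution to $e(\underline{a};A)$ beyond $c_1\cdots c_d\,e(\mathfrak{q};A)$,'' is precisely the statement to be proved, and you defer it to an argument that ``presumably'' follows B\"oger's theorem. Note also that the comparison you propose (``term by term with the Hilbert-series computation above'') is not available here: that computation presupposed that $H_i(\underline{a^{\star}};G)$ has finite length, which fails exactly in the case under consideration. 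The paper's proof (Theorem \ref{4.1}, Corollary \ref{4.3}) closes this gap with two ingredients, both absent from your sketch: Rees' multiplicity theorem for quasi-unmixed rings, which upgrades the multiplicity equality $e(a_1^{e_1},\ldots,a_d^{e_d};A)=e(\mathfrak{q}^c;A)$ (equivalent to $e(\underline{a};A)=c_1\cdots c_d\,e(\mathfrak{q};A)$ by Lemma \ref{2.1}) to the statement that $(a_1^{e_1},\ldots,a_d^{e_d})$ is a reduction of $\mathfrak{q}^c$; and Theorem \ref{1.1}, the bridge translating ``reduction of $\mathfrak{q}^c$'' into ``$\underline{a^{\star}}$ is a system of parameters of $G$.'' Even granting Rees/B\"oger as a black box, without that bridge you never reach the assertion about initial forms.

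Part (a) has a smaller but real gap. The content of (a) is that the difference $e(\underline{a};A)-c_1\cdots c_d\,e(\mathfrak{q};A)$ \emph{is} an Euler characteristic of an explicit complex; defining $\chi(\underline{a},\mathfrak{q})$ to be the difference and citing Lemma \ref{2.1} only gives non-negativity (that part matches the paper's own use of Lemma \ref{2.1}). Your proposal to read it off from ``finite-length contributions'' of the $E^1$-page does not parse when $\underline{a^{\star}}$ is not a system of parameters, since then the modules $H_i(\underline{a^{\star}};G)$ have infinite length. The paper's solution (Lemma \ref{3.2}, Theorem \ref{3.4}) is to work with the graded pieces $[H_i(\underline{aT};R_A(\mathfrak{q}))]_n$, which are always of finite length, and to show that their alternating sum is constant for $n\gg 0$.

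Part (b), by contrast, is essentially correct and follows a genuinely different route from the paper's. Two remarks. First, your filtration is the paper's object in disguise: for $n\gg 0$ one has $F^nK_{\bullet}(\underline{a};A)=[K(\underline{aT};R_A(\mathfrak{q}))]_n$ and $K_{\bullet}/F^nK_{\bullet}=L(\underline{a};A)_n$ in the notation of Definition \ref{3.1}; but the paper never runs the spectral sequence, using instead additivity of $\chi$ on $0\to F^nK_{\bullet}\to K_{\bullet}\to K_{\bullet}/F^nK_{\bullet}\to 0$ together with the Hilbert--Samuel difference computation, which avoids all convergence questions. Second, your phrase ``the spectral sequence \ldots\ preserves Euler characteristics in this finite-length situation'' hides a genuine issue: the filtration is neither bounded nor finite, so convergence ($E^{\infty}=\operatorname{gr}H_{\bullet}(\underline{a};A)$ with separated, exhaustive induced filtrations) must be justified, e.g.\ by passing to the completion and/or an Artin--Rees argument; this is standard but has to be said. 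The paper's proof of (b) (Theorem \ref{4.2}) instead goes through Theorem \ref{1.1} and the invariance of multiplicity under reductions, trading homological algebra for reduction theory; your Hilbert-series argument buys a proof that does not need the reduction machinery, at the cost of the convergence discussion.
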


The investigation of the Euler characteristic $\chi(\underline{a},\mathfrak{q})$ is the main technical 
tool in order to prove the results in Theorems \ref{0.1} and \ref{0.2}. This Euler characteristic is defined 
in terms of a certain Koszul complex of the Rees algebra $R_A(\mathfrak{q}).$ To this end there are several 
investigations on Koszul homology modules. 
In Section 2 we study when the sequence of initial elements $\underline{a^{\star}} $ forms a
system of parameters in the form ring $G_A(\mathfrak{q}).$ This is of some independent interest. 

As a certain extension of Theorem \ref{0.2} we discuss the situation where 
$\underline{a^{\star}} = a_1^{\star},\ldots,a_d^{\star}$ is not necessarily a system of parameters in 
$G_A(\mathfrak{q})$. As a partial result we will be able to prove the following result.

\begin{theorem} \label{0.3} 
Let $(A,\mathfrak{m})$ denote a quasi-unmixed local ring. 
Let $\mathfrak{q} \subset A$ denote an $\mathfrak{m}$-primary 
ideal. Let $\underline{a} = a_1,\ldots,a_d \subset \mathfrak{q} = (q_1,\ldots,q_s)A$ be a system of parameters and 
$a_i \in \mathfrak{q}^{c_i} \setminus \mathfrak{q}^{c_i+1}, i = 1,\ldots,d.$ Suppose that  
$\underline{a^{\star}} = a_1^{\star},\ldots,a_d^{\star}$  
satisfies  
$\dim G_A(\mathfrak{q})/(\underline{a^{\star}}) = 1$. Then 
\[
e(\underline{a};A) = c_1\cdots c_d \, e(\mathfrak{q};A) + \sum_{i=1}^s (-1)^{i-1} \sum_{1 \leq j_1 < \ldots <j_i \leq s}
e(\underline{\tilde{a}}_{j_1}; A[\mathfrak{q}/q_{j_1}, \ldots, \mathfrak{q}/q_{j_i}]),
\]
where $\underline{\tilde{a}}_i = \tilde{a}_{1,i},\ldots,\tilde{a}_{d,i}$ denotes the sequence of 
strict transforms of $\underline{a} = a_1,\ldots,a_d$ on the blowing up rings $A[\mathfrak{q}/q_i], i = 1,\ldots,s$.
\end{theorem}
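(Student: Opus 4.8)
The starting point is Theorem \ref{0.2}\,(a), which already gives $e(\underline{a};A) = c_1\cdots c_d\, e(\mathfrak{q};A) + \chi(\underline{a},\mathfrak{q})$. Thus the task reduces to evaluating the Euler characteristic $\chi(\underline{a},\mathfrak{q})$ and showing that, under the hypothesis $\dim G_A(\mathfrak{q})/(\underline{a^{\star}}) = 1$, it equals the alternating sum over the blowing-up charts. I would first record the geometric meaning of the hypothesis: since $\dim G_A(\mathfrak{q}) = d$ (here the quasi-unmixedness is used), the graded quotient $G_A(\mathfrak{q})/(\underline{a^{\star}})$ has dimension one, so its $\Proj$ is a finite set of closed points on the exceptional divisor $E = \Proj G_A(\mathfrak{q}) \subset X := \Proj R_A(\mathfrak{q})$. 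Because $\underline{a}$ is a system of parameters, the common zero locus of the strict transforms $\tilde{a}_{k,i}$ on $X$ lies in the fibre $\pi^{-1}(\mathfrak{m}) = E$ over the closed point, where $\pi\colon X \to \Spec A$ is the blowing up; indeed, away from $E$ the map $\pi$ is an isomorphism and the $\tilde{a}_{k,i}$ agree with $\underline{a}$ up to units, which have no common zero off $\{\mathfrak{m}\}$. Hence this locus is exactly the finite point set above.

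The heart of the argument is to identify $\chi(\underline{a},\mathfrak{q})$ with the total local multiplicity $\sum_{p \in X} e_p(\underline{\tilde{a}})$ of the strict transforms on $X$. Recall that $\chi(\underline{a},\mathfrak{q})$ is the Euler characteristic of the Koszul complex $K_\bullet(a_1 t^{c_1},\ldots,a_d t^{c_d}; R_A(\mathfrak{q}))$ of the Rees algebra. I would sheafify this complex on $X$: each $a_k t^{c_k}$ is a global section of $\mathcal{O}_X(c_k)$, and on the affine chart $U_i = \Spec A[\mathfrak{q}/q_i]$ the ideal $\mathfrak{q}$ becomes principal, $\mathfrak{q}\,A[\mathfrak{q}/q_i] = q_i\,A[\mathfrak{q}/q_i]$, so that $a_k = q_i^{c_k}\,\tilde{a}_{k,i}$ with $\tilde{a}_{k,i} = a_k/q_i^{c_k}$ the strict transform. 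Trivialising $\mathcal{O}_X(1)|_{U_i}$ by the unit $q_i t$ turns the Koszul complex of the sections $a_k t^{c_k}$ into the ordinary Koszul complex of the strict transforms $\underline{\tilde{a}}_i$. Consequently the homology sheaves of $K_\bullet$ are supported on the finite point set of the previous paragraph, and at a point $p \in U_i$ the alternating sum of their stalk-lengths is the Euler characteristic of the local Koszul complex, i.e.\ the intersection multiplicity $e_p(\underline{\tilde{a}}_i)$, by the Auslander--Buchsbaum--Serre formula. Since these homology sheaves are $0$-dimensional they have no higher cohomology, so the global Euler characteristic collapses to the sum of stalk contributions, giving $\chi(\underline{a},\mathfrak{q}) = \sum_{p\in X} e_p(\underline{\tilde{a}})$, where at each $p$ the strict transforms are read off in any chart containing $p$ (they differ only by units of $\mathcal{O}_{X,p}$ on overlaps, so the multiplicity is unambiguous).

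It then remains to organise the finite sum $\sum_{p\in X} e_p(\underline{\tilde{a}})$ by the affine cover $X = \bigcup_{i=1}^s U_i$. For a point $p$ put $T_p = \{\, i : p \in U_i \,\} \neq \emptyset$; then $p$ lies in the intersection $U_S = \bigcap_{j\in S} U_j = \Spec A[\mathfrak{q}/q_{j} : j\in S]$ precisely when $S \subseteq T_p$, and $\sum_{\emptyset\neq S\subseteq T_p}(-1)^{|S|-1} = 1$. Summing this combinatorial identity over all $p$ and reading the strict transforms on each overlap $U_S$ off the chart of smallest index $j_1$ yields
\[
\sum_{p\in X} e_p(\underline{\tilde{a}}) = \sum_{i=1}^{s}(-1)^{i-1}\!\!\sum_{1\le j_1<\cdots<j_i\le s}\!\! e(\underline{\tilde{a}}_{j_1}; A[\mathfrak{q}/q_{j_1},\ldots,\mathfrak{q}/q_{j_i}]),
\]
which together with Theorem \ref{0.2}\,(a) is the asserted formula.

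The step I expect to be the main obstacle is the sheaf-theoretic identification $\chi(\underline{a},\mathfrak{q}) = \sum_{p\in X} e_p(\underline{\tilde{a}})$ of the second paragraph: one must prove that the Euler characteristic defined on the Rees algebra genuinely computes the Euler characteristic of the sheafified Koszul homology on $X$, and this is exactly where the hypothesis $\dim G_A(\mathfrak{q})/(\underline{a^{\star}}) = 1$ is indispensable, for it forces the homology sheaves to be $0$-dimensional so that the passage from the graded module to its cohomology introduces no further correction terms. Controlling the high-degree behaviour of the graded Koszul homology, namely the vanishing of higher sheaf cohomology in large internal degree, and matching it with the definition of $\chi(\underline{a},\mathfrak{q})$, is the technical core; the quasi-unmixedness of $A$ enters both here, to guarantee $\dim G_A(\mathfrak{q}) = d$ and the additivity of the local multiplicities over the charts, and in interpreting each $e(\underline{\tilde{a}}_{j_1}; A[\mathfrak{q}/q_{j_1},\ldots,\mathfrak{q}/q_{j_i}])$ as $\sum_{p\in U_S} e_p(\underline{\tilde{a}})$.
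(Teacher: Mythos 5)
Your strategy is sound and, once its flagged core step is filled in, does prove the theorem; but it is worth seeing that it is the geometric repackaging of the paper's own argument rather than a genuinely independent route. The paper (Theorem \ref{7.1}, combined with Theorem \ref{3.4}) stays entirely inside graded algebra: it tensors $K(\underline{aT};R_A(\mathfrak{q}))$ with the \v{C}ech complex $C^{\cdot}$ of $R_A(\mathfrak{q})$ with respect to $q_1T,\ldots,q_sT$, uses the vanishing of $[H^i_{\mathfrak{q}T}(R_A(\mathfrak{q}))]_n$ for $n\gg 0$ together with additivity of Euler characteristics to obtain $\chi(\underline{a},\mathfrak{q};n)=\sum_{i=1}^s(-1)^{i-1}\chi([K\otimes C^i]_n)$, and then converts each summand into multiplicities of strict transforms via Lemmas \ref{7.0} and \ref{8.0} and Remarks \ref{7.0a}--\ref{7.0d}. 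That single \v{C}ech computation accomplishes simultaneously the three things you separate out: your ``main obstacle'' --- matching $\chi(\underline{a},\mathfrak{q})$ with the sheaf-level Euler characteristic on the blow-up $X=\Proj R_A(\mathfrak{q})$ --- is exactly the Serre--Grothendieck correspondence $[M]_n\simeq\Gamma(X,\widetilde{M}(n))$ for $n\gg 0$, which is equivalent to the graded local cohomology vanishing the paper invokes, so it is standard and fillable; the collapse onto the finite support and the inclusion--exclusion over the charts are what the \v{C}ech complex encodes automatically. What your route buys is the intermediate identity $\chi(\underline{a},\mathfrak{q})=\sum_{p}e_p(\underline{\tilde a})$ over the closed points of $\Proj G_A(\mathfrak{q})/(\underline{a}^{\star})$, a clean pointwise statement that makes both the chart-independence (the strict transforms differ by units across charts, Remark \ref{7.0c}) and the combinatorics transparent; what the paper's route buys is that no sheaf machinery is needed and the bookkeeping over chart intersections comes for free. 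Two points in your write-up need repair. First, quasi-unmixedness is not what gives $\dim G_A(\mathfrak{q})=d$ --- that holds for every Noetherian local ring; it is needed so that every relevant closed point $p$ of the blow-up has $\dim\mathcal{O}_{X,p}=d$ (Remarks \ref{7.0a} and \ref{7.0c}), which combined with the finite-length statement of Lemma \ref{7.0b} is what makes the strict transforms systems of parameters and hence the local Koszul Euler characteristics equal to multiplicities (Remark \ref{7.0d}); without this, your $e_p$ would not be computed by Koszul complexes at all. Second, your stalk lengths must be taken as $A$-lengths (or the residue-field degrees $[k(p):k]$ must be tracked) in order for the pointwise sum to agree with $\chi(\underline{a},\mathfrak{q})$, which is defined via $\ell_A$; this is precisely the convention built into the paper's definition of $e(I;A[\mathfrak{q}/q])$ in Remark \ref{7.0a} (B).
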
 

The corresponding statement of Theorem \ref{0.3} for $\dim G_A(\mathfrak{q})/(\underline{a^{\star}}) > 1$ 
does not hold. That is, we get an expression 
of $\chi(\underline{a},\mathfrak{q})$ in the case of $\dim G_A(\mathfrak{q})/(\underline{a^{\star}}) 
\leq 1$. It is an open problem  how to go on in the remaining cases. A discussion in affine three space 
$\mathbb{A}^3_k$ is in preparation.

Section 3 is devoted to some motivating examples. In Section 4 we investigate the Euler 
characteristics related to certain Koszul complexes of the Rees algebra. In Section 5 we study 
the equality of the two multiplicities we are interested. The particular situation of dimension 2 
of the underlying ring is the contents of Sections 6 and 7.  The proof of Theorem \ref{0.2} is done in 
Theorems \ref{3.4}, \ref{4.1} and \ref{4.2}. In Section 8 we study the Euler characteristic 
$\chi(\underline{a},\mathfrak{q})$ in terms of the blowing up ring $R_A(\mathfrak{q})$ and the 
local cohomology of the \v{C}ech complex (see Theorem \ref{7.1} for the details). In the 
final Section 10 we illustrate the results by a few examples. 

In the terminology we follow Matsumura's textbook (see \cite{Ma}). For some basic 
results on the $\Proj$ of a graded ring we refer to \cite{HIO}.

\section{On Systems of Parameters} 
In the following let $(A,\mathfrak{m})$ denote a local Noetherian 
ring and $d = \dim A.$ Let $\mathfrak{q}$ be an $\mathfrak{m}$-primary 
ideal. Furthermore, let $\underline{a} = a_1,\ldots,a_d$ denote a system of 
parameters of $A.$ We write $(\underline{a})$ for the ideal generated 
by the elements $\underline{a}.$ 

Let $G_A(\mathfrak{q}) = \oplus_{n\geq 0} \mathfrak{q}^n/\mathfrak{q}^{n+1}$ 
denote the form ring of $A$ with respect to $\mathfrak{q}.$ The  
Rees ring is defined by $R_A(\mathfrak{q}) = \oplus_{n\in \mathbb N} 
\mathfrak{q}^nT^n \subset A[T].$ It follows that 
\[
R_A(\mathfrak{q})/\mathfrak{q}R_A(\mathfrak{q}) = G_A(\mathfrak{q}).
\]
Moreover, it is well-known that $\dim G_A(\mathfrak{q}) = \dim A$ and 
$\dim R_A(\mathfrak{q}) = \dim A +1.$ Now we assume that $\underline{a} 
\subseteq \mathfrak{q}.$ By the Krull Intersection Theorem for each 
$i \in \{1,\ldots,d\}$ there is a unique integer $c_i$ such that 
\[
a_i \in \mathfrak{q}^{c_i} \setminus \mathfrak{q}^{c_i +1} .
\] 
Moreover, for $i = 1,\ldots,d$ let 
\[
a_i^{\star} := a_i + \mathfrak{q}^{c_i+1}/\mathfrak{q}^{c_i+1} \in [G_A(\mathfrak{q})]_{c_i}
\]
denote the initial form of $a_i$ in $G_A(\mathfrak{q}).$ We 
define $c = c_1\cdots c_d$ and $e_i = c/c_i$ for $i = 1,\ldots,d.$  Then 
$a_i^{e_i} \in \mathfrak{q}^c$ and $(a_1^{e_1},\ldots,a_d^{e_d})  
\subseteq \mathfrak{q}^c.$ 

For the notion of a reduction of $\mathfrak{q}$ resp. a minimal reduction of 
$\mathfrak{q}$ we refer to \cite{Re} and \cite{SH}. Note that if for a system 
of elements $\underline{b} = b_1,\ldots,b_d$ of  $\mathfrak{q}$ the 
sequence of initial elements $\underline{b^{\star}} = b_1^{\star}, \ldots, b_d^{\star}$ 
is a homogeneous system of parameters in $G_A(\mathfrak{q}),$ then $\underline{b}$ 
is a system of parameters in $A.$ Here we need the following partial converse.

\begin{theorem} \label{1.1} With the previous notation the following conditions 
are equivalent: 
\begin{itemize}
\item[(i)] The ideal $(a_1^{e_1},\ldots,a_d^{e_d})A$ is a minimal reduction of 
$\mathfrak{q}^c.$ 
\item[(ii)] There is an integer $k$ such that $\mathfrak{q}^n = \sum_{i=1}^da_i \mathfrak{q}^{n-c_i}$ 
for all $n \geq k.$ 
\item[(iii)] The sequence $\underline{a}^{\star} = a_1^{\star},\ldots,
a_d^{\star}$ forms a system of parameters of $G_A(\mathfrak{q}).$
\end{itemize}
\end{theorem}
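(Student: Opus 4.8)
The plan is to pass everything to graded data and show that (i), (ii), (iii) all translate into the same statement about the modules $\mathfrak{q}^n/N_n$, where I abbreviate $N_n := \sum_{i=1}^d a_i\,\mathfrak{q}^{n-c_i}\subseteq\mathfrak{q}^n$, so that (ii) reads $\mathfrak{q}^n=N_n$ for $n\gg0$. First I would record the graded meaning of (iii). Since $a_i^{\star}$ is homogeneous of degree $c_i$, multiplication by it carries $[G_A(\mathfrak{q})]_{n-c_i}=\mathfrak{q}^{n-c_i}/\mathfrak{q}^{n-c_i+1}$ into degree $n$, and a direct computation gives
\[
\bigl[G_A(\mathfrak{q})/(\underline{a^{\star}})\bigr]_n=\mathfrak{q}^n/\bigl(N_n+\mathfrak{q}^{n+1}\bigr).
\]
Because $[G_A(\mathfrak{q})]_0=A/\mathfrak{q}$ is Artinian and $\dim G_A(\mathfrak{q})=d$, the $d$ homogeneous elements $\underline{a^{\star}}$ form a system of parameters if and only if $G_A(\mathfrak{q})/(\underline{a^{\star}})$ has dimension $0$, i.e. if and only if the displayed module vanishes for $n\gg0$; that is, $\mathfrak{q}^n=N_n+\mathfrak{q}^{n+1}$ for $n\gg0$. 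Now for each fixed $n$ this last equation already forces $\mathfrak{q}^n=N_n$: reducing modulo $N_n$ identifies $\mathfrak{q}^n/N_n$ with the image of $\mathfrak{q}^{n+1}=\mathfrak{q}\cdot\mathfrak{q}^n$, which is $\mathfrak{q}\cdot(\mathfrak{q}^n/N_n)$, so Nakayama applied to the finitely generated $A$-module $\mathfrak{q}^n/N_n$ yields $\mathfrak{q}^n=N_n$. This proves (ii)$\Leftrightarrow$(iii) degree by degree.

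Next I would treat (i)$\Leftrightarrow$(ii) through reduction theory. Since $\mathfrak{q}^c$ is $\mathfrak{m}$-primary its analytic spread is $\ell(\mathfrak{q}^c)=d$, so any reduction of $\mathfrak{q}^c$ generated by $d$ elements is automatically a minimal reduction (standard reduction theory, see \cite{SH}); thus (i) is equivalent to the bare reduction equality $\mathfrak{q}^{c(m+1)}=(a_1^{e_1},\ldots,a_d^{e_d})\,\mathfrak{q}^{cm}$ for $m\gg0$, and the substance of the argument uses only this. For (i)$\Rightarrow$(ii) I would factor $a_i^{e_i}=a_i\cdot a_i^{e_i-1}$ with $a_i^{e_i-1}\in\mathfrak{q}^{(e_i-1)c_i}=\mathfrak{q}^{\,c-c_i}$; then
\[
\mathfrak{q}^{cm}=\sum_i a_i^{e_i}\,\mathfrak{q}^{c(m-1)}\subseteq\sum_i a_i\,\mathfrak{q}^{\,cm-c_i}=N_{cm},
\]
so $\mathfrak{q}^{cm}=N_{cm}$ for $m\gg0$. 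Finally I would fill in the intermediate degrees: writing $n=cm+s$ with $0\le s<c$, multiplying $\mathfrak{q}^{cm}=N_{cm}$ by $\mathfrak{q}^s$ gives $\mathfrak{q}^n=\mathfrak{q}^sN_{cm}=N_n$, which is (ii) for all $n\gg0$.

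The remaining implication (ii)$\Rightarrow$(i) is the main obstacle. Starting from $\mathfrak{q}^n=N_n$ for $n\ge k$, I would iterate the substitution $\mathfrak{q}^m=\sum_i a_i\,\mathfrak{q}^{m-c_i}$ to expand
\[
\mathfrak{q}^n=\sum_{(i_1,\ldots,i_r)} a_{i_1}\cdots a_{i_r}\,\mathfrak{q}^{\,n-c_{i_1}-\cdots-c_{i_r}},
\]
carrying each branch until it contains more than $\sum_{i=1}^d(e_i-1)$ factors. A pigeonhole argument then guarantees that in every resulting monomial some index $i$ occurs at least $e_i$ times, so $a_{i_1}\cdots a_{i_r}$ is divisible by $a_i^{e_i}\in\mathfrak{q}^c$; collecting the remaining factors, which lie in $\mathfrak{q}^{\,c_{i_1}+\cdots+c_{i_r}-c}$, shows that each term lies in $(a_1^{e_1},\ldots,a_d^{e_d})\,\mathfrak{q}^{\,n-c}$. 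Hence $\mathfrak{q}^n\subseteq(a_1^{e_1},\ldots,a_d^{e_d})\,\mathfrak{q}^{\,n-c}$ for $n\gg0$, and specializing to $n=cm$ gives the reduction equality, i.e. (i). The delicate point is the bookkeeping of exponents: one must take $n$ large enough, namely $n\ge k+(\sum_i e_i-d)\max_i c_i$, that along every branch the $\mathfrak{q}$-exponent stays $\ge k$ until the pigeonhole threshold is passed, so the substitution remains licit at each step. Equivalently, and perhaps more transparently, condition (ii) says that $R_A(\mathfrak{q})$ is module-finite over the subalgebra $A[a_1T^{c_1},\ldots,a_dT^{c_d}]$, and the same pigeonhole, applied to monomials in the $a_iT^{c_i}$, shows that the $c$-th Veronese $R_A(\mathfrak{q}^c)$ is module-finite over $A[a_1^{e_1}T^c,\ldots,a_d^{e_d}T^c]$, which is exactly (i).
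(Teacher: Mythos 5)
Your proof is correct, and while it splits the theorem exactly as the paper does --- (ii)$\Leftrightarrow$(iii) on graded pieces, then (i)$\Leftrightarrow$(ii) separately --- your second equivalence follows a genuinely different route. For (ii)$\Leftrightarrow$(iii) you and the paper argue identically: identify $[G_A(\mathfrak{q})/(\underline{a}^{\star})]_n$ with $\mathfrak{q}^n/(N_n+\mathfrak{q}^{n+1})$, where $N_n=\sum_{i=1}^d a_i\mathfrak{q}^{n-c_i}$, and apply Nakayama; your version is in fact slightly cleaner, since observing that the image of $\mathfrak{q}^{n+1}$ in $\mathfrak{q}^n/N_n$ equals $\mathfrak{q}\cdot(\mathfrak{q}^n/N_n)$ kills each degree separately, where the paper only gestures at an induction. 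For (i)$\Leftrightarrow$(ii) the paper argues structurally: it forms the commutative square of graded inclusions $A[a_1^{e_1}T^c,\ldots,a_d^{e_d}T^c]\subset A[a_1T^{c_1},\ldots,a_dT^{c_d}]$ and $A[\mathfrak{q}^cT^c]\subset A[\mathfrak{q}T]$, notes that the horizontal inclusions are integral and hence module-finite, identifies (i) resp.\ (ii) with module-finiteness of the left resp.\ right vertical inclusion (an Artin--Rees type criterion, \cite[Lemma 10.8]{AM}), and transfers finiteness across the square via \cite[Proposition 2.16]{AM}. You instead prove both implications by direct computation: (i)$\Rightarrow$(ii) by factoring $a_i^{e_i}=a_i\cdot a_i^{e_i-1}\in a_i\mathfrak{q}^{c-c_i}$ and filling in intermediate degrees by multiplying with $\mathfrak{q}^s$, and (ii)$\Rightarrow$(i) by iterated substitution plus pigeonhole, with the explicit threshold $n\ge k+(\sum_i e_i-d)\max_i c_i$ keeping every substitution licit (the bookkeeping is right; one should also normalize $k\ge\max_i c_i$ so no negative powers of $\mathfrak{q}$ occur, which is harmless). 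The trade-off: the paper's argument is short and conceptual but outsources the substance to cited finiteness results, while yours is elementary, self-contained and effective, and your closing remark --- that (i) and (ii) each assert module-finiteness of a Rees algebra over the corresponding parameter subalgebra --- is essentially the paper's proof rediscovered. A final point in your favor: both arguments actually produce the word ``reduction'' and must upgrade it to ``minimal reduction''; you justify this explicitly (a $d$-generated reduction of the $\mathfrak{m}$-primary ideal $\mathfrak{q}^c$ is automatically minimal because the analytic spread of $\mathfrak{q}^c$ is $d=\dim A$), whereas the paper leaves it implicit.
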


\begin{proof} 
First of all note that the condition (iii) is equivalent to the existence of 
an integer $k$ such that 
\[
[G_A(\mathfrak{q})/(\underline{a}^{\star})]_n  = \mathfrak{q}^n/(\sum_{i=1}^da_i \mathfrak{q}^{n-c_i} ,
\mathfrak{q}^{n+1}) = 0 \text{ for all } n \geq k.
\] 
By induction and Nakayama Lemma this is equivalent to the statement in (ii).  

It remains to prove the equivalence of the statements (i) and (ii). To this end 
we consider the commutative diagram of inclusions of graded $A$-algebras:
\[
\begin{array}{ccc}
A[a_1^{e_1}T^c, \ldots, a_d^{e_d}T^c] &  \subset  & A[a_1T^{c_1},\ldots,a_dT^{c_d}]\\ 
\cap &   & \cap \\ 
A[\mathfrak{q}^cT^c]  & \subset & A[\mathfrak{q}T].
\end{array} 
\] 
We have that $A[\mathfrak{q}^cT^c]  \simeq R_A(\mathfrak{q}^c)$ and $A[\mathfrak{q}T]
= R_A(\mathfrak{q}).$ First note that the two horizontal inclusions 
are integral as easily seen. Then both of the horizontal inclusions are 
finitely generated extensions (see \cite[Corollary 5.2]{AM}). Second we claim that the vertical inclusions 
are finitely generated extension if and and only if the condition (i) resp. the condition 
(ii) is fulfilled. This follows by a variant of the Artin-Rees Lemma (see \cite[Lemma 10.8]{AM}). 
Therefore the equivalence of (i) and (ii) follow by virtue of \cite[Proposition 2.16]{AM}. 
\end{proof} 

It is of  some interest to decide whether the system of parameters 
$\underline{a}^{\star} = a_1^{\star},\ldots,a_d^{\star}$ in $G_A(\mathfrak{q})$ is a 
$G_A(\mathfrak{q})$-regular sequence.  To this end we need the following definition:

\begin{definition} \label{1.1a} Let $\mathfrak{q}$ denote an $\mathfrak{m}$-primary ideal 
and $\underline{a} = a_1,\ldots,a_d$ a system of parameters contained in $\mathfrak{q}$ such that 
$\underline{a}^{\star} = a_1^{\star},\ldots,a_d^{\star}$ is a system of parameters in $G_A(\mathfrak{q})$. 
Let 
\[
f = f(\underline{a},\mathfrak{q})= \max \{n \in \mathbb{Z} | [G_A(\mathfrak{q})/(\underline{a}^{\star})]_n \not= 0\}
\]
denote the degree of nilpotency. 
Note that $f$ is a well-defined positive integer in case $\underline{a}^{\star}$ is a system 
of parameters in $G_A(\mathfrak{q}).$ Moreover define 
\[
a = a(\mathfrak{q}) = \max \{n \in \mathbb{Z} | [H^d_{\mathfrak{q}T}(G_A(\mathfrak{q}))]_n \not= 0\}
\]
the $a$-invariant of $\mathfrak{q}$. Here  $H^i_{\mathfrak{q}T}(G_A(\mathfrak{q}))$ denotes the $i$-th 
local cohomology of $G_A(\mathfrak{q}) = R_A(\mathfrak{q})/\mathfrak{q}R_A(\mathfrak{q})$ with respect 
to $\mathfrak{q}T$.  Clearly this is a finite number.  Note that $a(\mathfrak{q})$ is related to the $a$-invariant 
introduced by G\^{o}to and Watanabe (see \cite{GW}). 
\end{definition} 

\begin{theorem} \label{1.2}  Let $\underline{a} = a_1,\ldots,a_d$ be a system of parameters of 
the Cohen-Macaulay ring $(A,\mathfrak{m}).$ Let $\mathfrak{q} \supset (\underline{a})$ be an 
$\mathfrak{m}$-primary ideal. Then the following conditions are equivalent:
\begin{itemize}
\item[(i)] The sequence $\underline{a}^{\star} = a_1^{\star},\ldots,
a_d^{\star}$ is a $G_A(\mathfrak{q})$-regular sequence.
\item[(ii)] The sequence $\underline{a}^{\star} = a_1^{\star},\ldots,
a_d^{\star}$ is a system of parameters of $G_A(\mathfrak{q})$ and 
\[
(\underline{a}) \cap 
\mathfrak{q}^n = \sum_{i=1}^da_i \mathfrak{q}^{n-c_i} 
\]
for all $n =1,\ldots,f.$
\end{itemize} 
\end{theorem}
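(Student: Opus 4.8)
The plan is to deduce the equivalence from the Cohen--Macaulay hypothesis together with a Valabrega--Valla type criterion, and then to reduce the infinitely many intersection conditions to the finite range $n = 1,\ldots,f$. Since $A$ is Cohen--Macaulay and $\underline{a} = a_1,\ldots,a_d$ is a system of parameters, it is automatically an $A$-regular sequence; this is the only place the Cohen--Macaulay assumption enters, and it makes the $A$-regularity of $\underline{a}$ free of charge. I would first show that both (i) and (ii) are equivalent to the single condition
\[
(\ast) \qquad \underline{a}^{\star} \text{ is a system of parameters of } G_A(\mathfrak{q}) \text{ and } (\underline{a}) \cap \mathfrak{q}^n = \sum_{i=1}^d a_i \mathfrak{q}^{n-c_i} \text{ for all } n \geq 1 .
\]

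The equivalence of (i) and $(\ast)$ is the heart of the matter and rests on the Valabrega--Valla criterion, adapted to the non-standard initial degrees $c_i = \deg a_i^{\star}$. The key single-element statement is that, for an $A$-regular element $a \in \mathfrak{q}^{c}\setminus\mathfrak{q}^{c+1}$, the initial form $a^{\star} \in [G_A(\mathfrak{q})]_{c}$ is a non-zerodivisor on $G_A(\mathfrak{q})$ if and only if $(a) \cap \mathfrak{q}^n = a\,\mathfrak{q}^{n-c}$ for all $n$; this is a direct unwinding of what $a^{\star}\xi^{\star} = 0$ means for the filtration. One then proceeds by induction on $d$: the intersection condition for $a_1$ yields a degree-preserving ring isomorphism $G_A(\mathfrak{q})/(a_1^{\star}) \cong G_{A/(a_1)}(\mathfrak{q}(A/(a_1)))$, since in degree $n$ the ideal $(a_1^{\star})$ equals $(a_1\mathfrak{q}^{n-c_1}+\mathfrak{q}^{n+1})/\mathfrak{q}^{n+1} = (\mathfrak{q}^{n+1}+(a_1)\cap\mathfrak{q}^n)/\mathfrak{q}^{n+1}$, so the shift by $c_1$ causes no difficulty. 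The inductive hypothesis is then applied to the images of $a_2,\ldots,a_d$ in $A/(a_1)$. Finally, a homogeneous regular sequence of length $d = \dim G_A(\mathfrak{q})$ consisting of elements of positive degree is automatically a homogeneous system of parameters, so regularity in (i) delivers the system-of-parameters clause of $(\ast)$ for free, while $(\ast)$ supplies exactly the intersection data the criterion consumes.

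It remains to pass between $(\ast)$ and (ii), i.e.\ to see that the intersection conditions for $n > f$ are automatic once $\underline{a}^{\star}$ is a system of parameters; this is the concrete, self-contained step. By the definition of the degree of nilpotency $f$, for every $n > f$ one has $[G_A(\mathfrak{q})/(\underline{a}^{\star})]_n = 0$, that is $\mathfrak{q}^n = \sum_{i=1}^d a_i\mathfrak{q}^{n-c_i} + \mathfrak{q}^{n+1}$. Writing $M = \mathfrak{q}^n / \sum_{i=1}^d a_i \mathfrak{q}^{n-c_i}$, this identity reads $M = \mathfrak{q}M$, so Nakayama's Lemma forces $M = 0$ and hence $\mathfrak{q}^n = \sum_{i=1}^d a_i\mathfrak{q}^{n-c_i}$. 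In particular $(\underline{a}) \cap \mathfrak{q}^n = \mathfrak{q}^n = \sum_{i=1}^d a_i \mathfrak{q}^{n-c_i}$ for all $n > f$. Thus the condition in $(\ast)$ for all $n$ is equivalent to its restriction to $1 \leq n \leq f$, which is precisely the second clause of (ii); since the system-of-parameters clause is common to both, $(\ast)$ and (ii) coincide, and chaining the equivalences gives (i) $\Leftrightarrow (\ast) \Leftrightarrow$ (ii).

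I expect the main obstacle to lie in the Valabrega--Valla step, not in the finite reduction. The delicate point is that $(\ast)$ posits the intersection identity only for the full ideal $(\underline{a})$, whereas a naive induction would want the analogous identity for each partial ideal $(a_1,\ldots,a_j)$ and, after dividing by $a_1$, would need to verify that the images $\bar a_2,\ldots,\bar a_d$ still have initial degrees $c_2,\ldots,c_d$ and satisfy the transported intersection conditions in $A/(a_1)$. Reconciling the single full-ideal hypothesis with these per-step requirements is exactly the content of Valabrega--Valla, and the bookkeeping must be carried out with the genuine initial degrees $c_i$ in place of the classical degree-one case. By contrast, the Nakayama reduction to $n \leq f$ is short and robust, and the Cohen--Macaulay hypothesis cleanly removes any separate regularity verification in $A$.
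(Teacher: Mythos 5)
Your proposal is correct and takes essentially the same route as the paper: both arguments rest on the Valabrega--Valla criterion (\cite[Corollary 2.7]{VV}, which needs $\underline{a}$ to be an $A$-regular sequence --- exactly where the Cohen--Macaulay hypothesis enters) to identify (i) with the intersection identity for all $n \geq 1$, and both dispose of the range $n > f$ by the same Nakayama argument, giving $\mathfrak{q}^n = \sum_{i=1}^d a_i\mathfrak{q}^{n-c_i}$ and hence $(\underline{a}) \cap \mathfrak{q}^n = \mathfrak{q}^n$ there (the paper packages this via its Theorem \ref{1.1} and the inclusion $\mathfrak{q}^{f+1} \subseteq (\underline{a})$). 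The only difference is cosmetic: you sketch a proof of the Valabrega--Valla criterion (single-element case plus induction through $G_A(\mathfrak{q})/(a_1^{\star}) \cong G_{A/(a_1)}(\mathfrak{q}A/(a_1))$) where the paper simply cites it.
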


\begin{proof}
If $\underline{a^{\star}}$ is a $G_A(\mathfrak{q})$-regular sequence it is  a system of 
parameters since $d = \dim G_A(\mathfrak{q}).$ The relation  $(\underline{a}) \cap 
\mathfrak{q}^n = \sum_{i=1}^da_i \mathfrak{q}^{n-c_i} $ is true for all $n \geq 1$ as shown by 
Valabrega and Valla (see \cite[Corollary 2.7]{VV}). So the implication (i) $\Rightarrow$ (ii) is true. 

For the proof of the reverse implication we have to show that  $(\underline{a}) \cap\mathfrak{q}^n = 
\sum_{i=1}^da_i \mathfrak{q}^{n-c_i}$ is fulfilled  for all $n \geq 1$ (see \cite[Corollary 2.7]{VV}). 
By the definition of $f$ and since $\underline{a^{\star}}$ is a system of parameters it 
follows that 
\[
(\underline{a}) \cap \mathfrak{q}^n  = \mathfrak{q}^n = \sum_{i=1}^da_i \mathfrak{q}^{n-c_i} 
\text{ for all } n \geq f+1
\]
by Theorem \ref{1.1} and because of $\mathfrak{q}^{f+1} \subset (\underline{a})$ by the definition 
of $f = f(\underline{a}, \mathfrak{q})$.  Together with the assumption this completes the argument. 
\end{proof}

The advantage of the characterization of a regular sequence as
in Theorem \ref{1.2}   is its effectiveness 
for computational reasons. It is quite 
effective to check that the sequence $\underline{a}^{\star} = a_1^{\star},\ldots,
a_d^{\star}$ is a system of parameters. Then one has to check only finitely many 
equalities (depending on the number $f$) for the regularity of the 
sequence in $G_A(\mathfrak{q}).$ 

\begin{lemma} \label{1.3} With the previous notation suppose that $\underline{a}^{\star} = a_1^{\star},\ldots,a_d^{\star}$ 
is a system of parameters in $G_A(\mathfrak{q})$.  Then 
$
a(\mathfrak{q}) + \sum_{i=1}^d c_i \leq f(\underline{a},\mathfrak{q}).
$
If $G_A(\mathfrak{q})$ is a Cohen-Macaulay ring, equality holds.
\end{lemma}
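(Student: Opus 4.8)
The plan is to peel off the initial forms $a_1^{\star},\ldots,a_d^{\star}$ one at a time and track how the top graded local cohomology changes at each step, reading $a(\mathfrak q)$ and $f(\underline a,\mathfrak q)$ off the two ends of this process. Write $G = G_A(\mathfrak{q})$ and set $G^{(j)} = G/(a_1^{\star},\ldots,a_j^{\star})$, so that $G^{(0)} = G$ and $G^{(d)} = G/(\underline{a}^{\star})$. Since $\underline{a}^{\star}$ is a homogeneous system of parameters, $\dim G^{(j)} = d-j$; write $a(G^{(j)})$ for the top nonzero degree of the top local cohomology $H^{d-j}_{\mathfrak{q}T}(G^{(j)})$, a well-defined integer since this top cohomology is nonzero and Artinian as a graded module. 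At the two ends, $a(G^{(0)}) = a(\mathfrak q)$ by definition, while $G^{(d)}$ has dimension zero, so $H^0_{\mathfrak{q}T}(G^{(d)}) = G^{(d)}$ and $a(G^{(d)}) = f(\underline a,\mathfrak q)$. It therefore suffices to prove the single-step estimate $a(G^{(j+1)}) \geq a(G^{(j)}) + c_{j+1}$, with equality when $G$ is Cohen--Macaulay, and then telescope.

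For the single step, fix $j$, put $\theta = a_{j+1}^{\star}$ and $c = c_{j+1}$, and regard multiplication by $\theta$ as a degree-zero map $G^{(j)}(-c) \to G^{(j)}$. Splitting it through its kernel $K = (0:_{G^{(j)}}\theta)(-c)$, image $I = \theta G^{(j)}$, and cokernel $G^{(j+1)}$ gives the two short exact sequences $0 \to K \to G^{(j)}(-c) \to I \to 0$ and $0 \to I \to G^{(j)} \to G^{(j+1)} \to 0$. The crucial observation is that $(0:_{G^{(j)}}\theta)$ is annihilated by $\theta$, hence is a module over $G^{(j)}/\theta G^{(j)}$, so $\dim K \leq d-j-1$ and thus $H^{d-j}_{\mathfrak{q}T}(K) = H^{d-j+1}_{\mathfrak{q}T}(K) = 0$ by Grothendieck vanishing. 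Feeding this into the long exact cohomology sequence of the first short exact sequence produces an isomorphism $H^{d-j}_{\mathfrak{q}T}(I) \cong H^{d-j}_{\mathfrak{q}T}(G^{(j)})(-c)$, under which the map induced by the inclusion $I \hookrightarrow G^{(j)}$ becomes multiplication by $\theta$.

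From the long exact sequence of the second short exact sequence, together with $H^{d-j}_{\mathfrak{q}T}(G^{(j+1)}) = 0$ (since $\dim G^{(j+1)} = d-j-1$), I extract the exact piece
\[
H^{d-j-1}_{\mathfrak{q}T}(G^{(j)}) \to H^{d-j-1}_{\mathfrak{q}T}(G^{(j+1)}) \to \Ker\bigl(\theta \colon H^{d-j}_{\mathfrak{q}T}(G^{(j)})(-c) \to H^{d-j}_{\mathfrak{q}T}(G^{(j)})\bigr) \to 0 .
\]
Setting $H = H^{d-j}_{\mathfrak{q}T}(G^{(j)})$ with top nonzero degree $a(G^{(j)})$, the degree $(a(G^{(j)})+c)$ component of $H(-c)$ is nonzero but lands in $[H]_{a(G^{(j)})+c} = 0$, so the displayed kernel has top nonzero degree exactly $a(G^{(j)})+c$. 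The surjection above then forces $a(G^{(j+1)}) \geq a(G^{(j)}) + c_{j+1}$, and telescoping over $j = 0,\ldots,d-1$ gives $f(\underline a,\mathfrak q) = a(G^{(d)}) \geq a(\mathfrak q) + \sum_{i=1}^d c_i$.

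When $G$ is Cohen--Macaulay the system of parameters $\underline{a}^{\star}$ is a regular sequence on $G$, so each $G^{(j)}$ is Cohen--Macaulay of dimension $d-j$; hence $H^{d-j-1}_{\mathfrak{q}T}(G^{(j)}) = 0$, the leftmost term in the displayed sequence drops out, and the surjection upgrades to an isomorphism $H^{d-j-1}_{\mathfrak{q}T}(G^{(j+1)}) \cong \Ker(\theta)$. Each single-step estimate then becomes an equality, and telescoping yields $f(\underline a,\mathfrak q) = a(\mathfrak q) + \sum_{i=1}^d c_i$. I expect the main obstacle to be the non-Cohen--Macaulay bookkeeping in the single step: one must verify that the two sources of error, namely the torsion module $(0:_{G^{(j)}}\theta)$ and the lower cohomology $H^{d-j-1}_{\mathfrak{q}T}(G^{(j)})$, only ever help the inequality---the former through the dimension bound that kills its top two local cohomologies, the latter through the direction of the surjection---rather than disturbing the top-degree count.
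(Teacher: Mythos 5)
Your proof is correct and takes essentially the same approach as the paper: both peel off the initial forms one parameter at a time, use the two short exact sequences coming from $0:_{G^{(j)}}a_{j+1}^{\star}$ and its image (your $K$ and $I$ are precisely the paper's $(G_A(\mathfrak{q})/0:a_1^{\star})(-c_1)$ reformulated), invoke Grothendieck vanishing to identify the top cohomology of the image with a twist of that of $G^{(j)}$, and extract the degree shift from the surjection onto the kernel of multiplication by $a_{j+1}^{\star}$, with Cohen--Macaulayness forcing injectivity on the left and hence equality at each step. The only difference is presentational: your explicit $G^{(j)}$ notation and single-step telescoping make precise what the paper compresses into ``by iterating this argument $d$-times.''
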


\begin{proof} Since $a_1^{\star}$ is a parameter it is easily seen that there is an isomorphism 
\[
H^d_{\mathfrak{q}T}(G_A(\mathfrak{q})/ 0: a_1^{\star}) \simeq H^d_{\mathfrak{q}T}(G_A(\mathfrak{q})), 
d = \dim A.
\] 
Moreover  there is the following short exact sequence 
\[
0 \to (G_A(\mathfrak{q})/ 0: a_1^{\star})(-c_1) \stackrel{a_1^{\star}}{\to} G_A(\mathfrak{q}) \to 
G_A(\mathfrak{q})/(a_1^{\star}) \to 0.
\]
By the Grothendieck Vanishing Theorem it induces an exact sequence  
\[
H^{d-1}_{\mathfrak{q}T}(G_A(\mathfrak{q})/ (a_1^{\star})) \to H^d_{\mathfrak{q}T}(G_A(\mathfrak{q}))(-c_1) \to H^d_{\mathfrak{q}T}(G_A(\mathfrak{q})) \to 0.
\]
A simple argument implies $a +c_1 \leq 
\max \{n \in \mathbb{Z} | [H^{d-1}_{\mathfrak{q}T}(G_A(\mathfrak{q})/(a_1^{\star}))]_n \not= 0\}$. In case $G_A(\mathfrak{q})$ 
is a Cohen-Macaulay ring $a_1^{\star}$ is a regular element. That is, the first map in the previous exact 
sequence is injective and therefore equality holds.  By iterating this argument $d$-times it follows that 
$a +\sum_{i=1}^d c_i \leq 
\max \{n \in \mathbb{N} | [H^0_{\mathfrak{q}T}(G_A(\mathfrak{q})/(\underline{a}^{\star}))]_n \not= 0\}$. Because 
of  $H^0_{\mathfrak{q}T}(G_A(\mathfrak{q})/(\underline{a}^{\star})) \simeq G_A(\mathfrak{q})/(\underline{a}^{\star})$ the definition 
of $f$ proves the estimate. In case $G_A(\mathfrak{q})$ is a Cohen-Macaulay ring we get in each step equality. 
\end{proof}

\section{A Problem and an Example}
In this section we use the notation as it was introduced at the 
beginning of the previous one.  
Here we want to relate the multiplicity $e(\mathfrak{q},A)$ 
of $\mathfrak{q}$ with respect to $A$ to the multiplicity 
$e(\underline{a},A)$ of $(\underline{a})$ with respect to $A.$

For the definition of the multiplicity as well as other basic 
notions of commutative algebra we refer to Matsumura's book 
(see \cite{Ma}). As a first result of our investigations there is 
the following Lemma. 

\begin{lemma} \label{2.1} With the previous notation the following are true:
\begin{itemize}
\item[(a)]  
$c \cdot e(\mathfrak{q},A) \leq e(\underline{a};A),$
where $c = c_1\cdots c_d.$
\item[(b)] Equality holds if and only if $e(a_1^{e_1}, \ldots,a_d^{e_d};A) = e(\mathfrak{q}^c;A).$
\end{itemize}
\end{lemma}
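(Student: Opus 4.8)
The plan is to reduce the whole statement to two standard multiplicity formulas together with the monotonicity of the Hilbert--Samuel multiplicity under inclusion of $\mathfrak{m}$-primary ideals. The key observation, already built into the setup, is the containment $(a_1^{e_1},\ldots,a_d^{e_d})A \subseteq \mathfrak{q}^c$: indeed $a_i \in \mathfrak{q}^{c_i}$ forces $a_i^{e_i} \in \mathfrak{q}^{c_i e_i} = \mathfrak{q}^c$ because $e_i = c/c_i$. Both $(a_1^{e_1},\ldots,a_d^{e_d})A$ (generated by a system of parameters) and $\mathfrak{q}^c$ are $\mathfrak{m}$-primary, so their multiplicities are defined and comparable.

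First I would record the two homogeneity formulas. For the system of parameters $\underline{a}$, the classical behaviour of the multiplicity symbol under raising the generators to powers gives
\[
e(a_1^{e_1},\ldots,a_d^{e_d};A) = e_1 \cdots e_d \cdot e(\underline{a};A) = c^{d-1}\, e(\underline{a};A),
\]
where the last equality uses $e_1\cdots e_d = \prod_i (c/c_i) = c^d/c = c^{d-1}$. On the other hand, for the $\mathfrak{m}$-primary ideal $\mathfrak{q}$ the behaviour of the multiplicity under taking powers gives $e(\mathfrak{q}^c;A) = c^d\, e(\mathfrak{q};A)$, since the Hilbert--Samuel function $n \mapsto \ell(A/\mathfrak{q}^{cn})$ of $\mathfrak{q}^c$ has leading coefficient scaled by $c^d$ relative to that of $\mathfrak{q}$. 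Both facts are standard and I would quote them from Matsumura \cite{Ma}.

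Next I would invoke monotonicity: if $I \subseteq J$ are both $\mathfrak{m}$-primary, then $\ell(A/I^n) \geq \ell(A/J^n)$ for all $n$, so comparing the leading coefficients of the two Hilbert--Samuel polynomials (each of degree $d$) yields $e(I;A) \geq e(J;A)$. Applying this to $I = (a_1^{e_1},\ldots,a_d^{e_d})A$ and $J = \mathfrak{q}^c$ gives
\[
c^{d-1}\, e(\underline{a};A) = e(a_1^{e_1},\ldots,a_d^{e_d};A) \geq e(\mathfrak{q}^c;A) = c^d\, e(\mathfrak{q};A).
\]
Dividing by $c^{d-1}$ (note $c \geq 1$) produces the inequality $e(\underline{a};A) \geq c \cdot e(\mathfrak{q};A)$ of part (a). Since the only inequality used is the monotonicity step, equality in (a) holds precisely when $e(a_1^{e_1},\ldots,a_d^{e_d};A) = e(\mathfrak{q}^c;A)$, which is exactly part (b).

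I do not anticipate a serious obstacle, as the argument is bookkeeping once the three ingredients are in place. The only points demanding a little care are the justification of the power formula $e(a_1^{e_1},\ldots,a_d^{e_d};A) = e_1\cdots e_d\, e(\underline{a};A)$ for a genuine system of parameters (rather than a regular sequence), and the monotonicity of $e(-;A)$ under inclusion; the former I would cite from the standard theory of the multiplicity symbol, and the latter rests on the elementary comparison of Hilbert--Samuel polynomials indicated above.
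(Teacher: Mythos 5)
Your proof is correct and follows essentially the same route as the paper: the containment $(a_1^{e_1},\ldots,a_d^{e_d})A \subseteq \mathfrak{q}^c$, monotonicity of multiplicity under inclusion of $\mathfrak{m}$-primary ideals, the formula $e(\mathfrak{q}^c;A)=c^d\,e(\mathfrak{q};A)$, and the formula $e(a_1^{e_1},\ldots,a_d^{e_d};A)=e_1\cdots e_d\,e(\underline{a};A)$ (which the paper cites from Auslander--Buchsbaum), with part (b) then read off from the fact that the monotonicity step is the only inequality in the chain. The only cosmetic difference is that you spell out the monotonicity argument via Hilbert--Samuel functions, which the paper dismisses as ``easily seen.''
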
 

\begin{proof} 
For each $i = 1,\ldots,d$ we have that $a_i^{e_i} \in \mathfrak{q}^c,$ 
where $e_i = c/c_i.$ So, there is the following containment relation 
\[
(a^{e_1}_1,\ldots,a^{e_d}_d)A \subseteq \mathfrak{q}^c.
\] 
For the multiplicities this says  
\[
e(\mathfrak{q}^c;A) \leq e(a^{e_1}_1,\ldots,a^{e_d}_d;A),
\]
as easily seen. Now $e(\mathfrak{q}^c,A) = c^d \cdot e(\mathfrak{q},A)$ (see 
e.g. \cite[Formula 14.3]{Ma}). Moreover, $a^{e_1}_1,\ldots,a^{e_d}_d$ is  a system of parameters of $A,$ and therefore 
\[
e(\mathfrak{q}^c;A) \leq e(a^{e_1}_1,\ldots,a^{e_d}_d;A) = e_1 \cdots  e_d 
\cdot e(\underline{a};A).
\]
See \cite[Proposition 4.4]{AB} for the last equality. Because of $e_1 \cdots e_d = c^{d-1}$ 
the above relation proves the statements  of the Lemma. 
\end{proof}

In the case of $\mathfrak{q} = \mathfrak{m}$ Pritchard (see \cite[Lemma 3.1]{Pr}) 
claimed the following: $e(\underline{a};A) = c \cdot e(\mathfrak{m};A)$ holds if and 
only if $a^{\star}_1,\ldots,a^{\star}_d$ is a $G_A(\mathfrak{m})$-regular sequence. 
In particular, the equality $e(\underline{a},A) = c\cdot e(\mathfrak{m},A)$ implies that 
the form ring $G_A(\mathfrak{m})$ is a graded Cohen-Macaulay ring. This is not true 
as the following examples show. 

\begin{example} \label{2.2} (A) Let $k$ denote a field and $A = k[|t^4,t^5,t^{11}|] 
\subset k[|t|],$ where $t$ is an indeterminate over $k.$ Then $A$ is a one-dimensional 
domain and therefore a Cohen-Macaulay ring with $A \simeq k[|X,Y,Z|]/(X^4-YZ,Y^3-XZ,Z^2-X^3Y^2).$ 
Clearly, the residue class $a = x$ of $X$ is a parameter with $a \in \mathfrak{m} \setminus 
\mathfrak{m}^2,$ so that $c = 1.$  

Furthermore, by easy calculations it follows that $e(a,A) = \ell_A(A/aA) = 4$ and 
$e(\mathfrak{m},A) = 4.$ So, the equation $e(a,A) = c \cdot e(\mathfrak{m},A)$ holds, 
while $G_A(\mathfrak{m}) = k[X,Y,Z]/(XZ,YZ,Y^4,Z^2)$ is not a Cohen-Macaulay ring 
(see \cite[Section 6]{Va} for the details). 

(B) Let $A = k[|x,y,z|]/(x)\cap(y,z).$ Then $f= x^2-y, g=x^2-z$ forms a system of 
parameters of $A.$ It is easily seen that $e(f,g;A) = e(\mathfrak{m};A) =1.$ Moreover, 
$G_A(\mathfrak{m}) = k[X,Y,Z]/(X)\cap (Y,Z)$ and $f^{\star} = Y, g^{\star} = Z$ 
Therefore, $G_A(\mathfrak{m})$ is not a Cohen-Macaulay ring. Moreover, $f^{\star}, g^{\star}$ 
is not a system of parameters of $G_A(\mathfrak{m})$ as easily seen.
\end{example}

\section{The use of Koszul complexes} 
Another tool of our investigations is the Koszul complex. For the basic definitions 
and basic properties of it we refer to \cite{AB} or \cite{Ma}. In particular we use the 
Koszul complex $K(\underline{a};A)$ of $A$ with respect to $\underline{a} = a_1,
\ldots,a_d$ and the Koszul complex $K(\underline{aT};R_A(\mathfrak{q}))$ 
of $R_A(\mathfrak{q})$ with respect to the sequence $\underline{aT} = a_1T^{c_1},\ldots,
a_dT^{c_d}.$ It is a complex of $\mathbb{N}$-graded $R_A(\mathfrak{q})$-modules with 
homogeneous homomorphisms of degree zero.

\begin{definition} \label{3.1} First let $K(\underline{a};A)[T]$ be the $\mathbb N$-graded 
complex obtained by $K(\underline{a};A)$ in each degree. Then there is an 
embedding of complexes $K(\underline{aT};R_A(\mathfrak{q})) \subset K(\underline{a};A)[T]$ considered 
as complexes of $R_A(\mathfrak{q})$-modules which is homogeneous of degree zero. 
The co-kernel of this embedding is a complex, defined by $L(\underline{a};A).$ So there is a 
short exact sequence of complexes 
\[
0 \to K(\underline{aT};R_A(\mathfrak{q})) \to K(\underline{a};A)[T] \to 
L(\underline{a};A) \to 0.
\]
Let $n \in \mathbb N$ be an integer. By the restriction of the previous short exact sequence 
to the degree $n$  there is the following short exact sequence of complexes of $A$-modules 
\[
0 \to K(\underline{aT};R_A(\mathfrak{q}))_n \to K(\underline{a};A) \to 
L(\underline{a};A)_n \to 0. 
\]
Let $L_i(\underline{a};A)_n$ be the $n$-th graded component 
of the $i$-th module in the complex $L(\underline{a};A).$ By the definitions it is easily seen that 
\[
L_i(\underline{a};A)_n = \bigoplus_{1\leq j_1 < \ldots <j_i \leq d} 
A/\mathfrak{q}^{n-c_{j_1}- \cdots c_{j_i}} 
\]
with the boundary maps induced by the Koszul complexes, where 
$\mathfrak{q}^m = 0$ for $m \leq 0.$ 
\end{definition} 

\begin{lemma} \label{3.2} With the previous notation let $H_i(\underline{aT};R_A(\mathfrak{q}))_n$ 
denote the $n$-th graded component of the $i$-th homology module of $K(\underline{aT};R_A(\mathfrak{q})).$ 
Then $H_i(\underline{aT};R_A(\mathfrak{q}))_n$ is of finite length as an 
$A$-module for each $i$ and each $n.$
\end{lemma}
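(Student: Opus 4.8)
The plan is to extract the finiteness directly from the defining short exact sequence of complexes in Definition \ref{3.1} by passing to the long exact homology sequence in each fixed degree $n$. The key observation is that the two outer complexes of that short exact sequence are assembled from modules of finite length, so by the long exact sequence the homology of the middle term $K(\underline{aT};R_A(\mathfrak{q}))_n$ must inherit finite length as well.

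First I would pin down the homology of the complexes flanking $K(\underline{aT};R_A(\mathfrak{q}))_n$. Restricting to degree $n$ gives the short exact sequence of complexes of $A$-modules
\[
0 \to K(\underline{aT};R_A(\mathfrak{q}))_n \to K(\underline{a};A) \to L(\underline{a};A)_n \to 0.
\]
The middle complex is the ordinary Koszul complex $K(\underline{a};A)$, whose homology modules $H_i(\underline{a};A)$ are finitely generated and annihilated by the ideal $(\underline{a})$. Since $\underline{a}$ is a system of parameters, $(\underline{a})$ is $\mathfrak{m}$-primary, so $A/(\underline{a})$ is Artinian and each $H_i(\underline{a};A)$ has finite length. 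For the right-hand complex I would use the explicit description
\[
L_i(\underline{a};A)_n = \bigoplus_{1 \leq j_1 < \cdots < j_i \leq d} A/\mathfrak{q}^{\,n - c_{j_1} - \cdots - c_{j_i}},
\]
where each summand $A/\mathfrak{q}^m$ has finite length because $\mathfrak{q}$ is $\mathfrak{m}$-primary (with the convention $\mathfrak{q}^m = 0$ for $m \leq 0$). Hence every term, and therefore every homology module $H_i(L(\underline{a};A)_n)$, has finite length.

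Finally I would invoke the long exact homology sequence of the displayed short exact sequence of complexes, which yields the exact piece
\[
H_{i+1}(L(\underline{a};A)_n) \to H_i(K(\underline{aT};R_A(\mathfrak{q}))_n) \to H_i(\underline{a};A).
\]
This exhibits $H_i(K(\underline{aT};R_A(\mathfrak{q}))_n)$ as an extension of a submodule of $H_i(\underline{a};A)$ by a quotient of $H_{i+1}(L(\underline{a};A)_n)$. Both of these are of finite length by the previous step, and finite length is stable under extensions, so $H_i(K(\underline{aT};R_A(\mathfrak{q}))_n)$ has finite length for every $i$ and every $n$.

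I do not anticipate a serious obstacle; the argument is purely formal once the two outer complexes are known to consist of finite-length modules. The only points deserving care are that the degree-$n$ restriction genuinely produces a short exact sequence of $A$-complexes and that the connecting homomorphisms are $A$-linear, both of which are immediate from the homogeneous-of-degree-zero construction recorded in Definition \ref{3.1}.
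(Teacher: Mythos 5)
Your proof is correct and takes essentially the same route as the paper's own argument: you use the same degree-$n$ short exact sequence of complexes from Definition \ref{3.1}, the same long exact homology sequence, and the same two finite-length inputs (the Koszul homology $H_i(\underline{a};A)$ of the system of parameters, and the homology of $L(\underline{a};A)_n$ as a subquotient of the explicitly described finite-length terms). The only difference is that you make explicit the extension step $H_{i+1}(L(\underline{a};A)_n) \to H_i(K(\underline{aT};R_A(\mathfrak{q}))_n) \to H_i(\underline{a};A)$, which the paper leaves implicit.
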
 

\begin{proof} Take the second short exact sequence of complexes of $A$-modules as introduced 
in Definition \ref{3.1}. The long exact 
homology sequence induces the exact sequence 
\[
\ldots \to H_i(\underline{aT};R_A(\mathfrak{q}))_n \to H_i(\underline{a};A) \to 
H_i(L(\underline{a};A))_n \to \ldots
\] 
Since $\underline{a} = a_1,\ldots,a_d$ is a system of parameters of $A$ the homology module 
$H_i(\underline{a};A)$ 
is an $A$-module of finite length (see e.g. \cite{Ma}). Moreover, $H_i(L(\underline{a};A))_n$ 
is by definition a sub quotient of $L_i(\underline{a};A)_n.$ Since $\mathfrak{q}$ is an $\mathfrak{m}$-primary 
ideal it is of finite length too. So the short exact sequence provides the claim. 
\end{proof} 

\begin{definition} \label{3.3} Let $F :  0 \to F_r \to \ldots\to  F_1 \to F_0 \to 0$ denote 
a bounded complex of $A$-modules such that for all $i$ the homology module $H_i(F)$ is 
of finite length. Then define the Euler characteristic 
\[
\chi(F) = \sum_{i \geq 0} (-1)^i \ell_A(H_i(F))
\]
of $F.$ Let $\underline{a} = a_1,\ldots,a_d$ denote a system of parameters of $A.$ Then it is 
known by Serre and Auslander Buchsbaum (see \cite{AB} resp. \cite{S}) that the Euler characteristic 
$\chi(\underline{a};A)$ of the Koszul complex $K(\underline{a};A)$ coincides with the 
multiplicity $e(\underline{a};A)$ (see also Remark \ref{3.5}). 

By view of Lemma \ref{3.2} we are able to define the Euler characteristic of the $n$-th graded 
piece of the Koszul complex $K(\underline{aT};R_A(\mathfrak{q})).$ We call this 
$\chi(\underline{a},\mathfrak{q};n).$ That is, 
\[
\chi(\underline{a},\mathfrak{q};n) = \sum_{i\geq 0} (-1)^i \ell_A([H_i(\underline{aT};R_A(\mathfrak{q}))]_n).
\]
\end{definition} 

\begin{theorem} \label{3.4} With the previous notation we have the following results: 
\begin{itemize}
  \item[(a)] The Euler characteristic $\chi(\underline{a},\mathfrak{q};n) $ 
  is a non-negative constant, say $\chi(\underline{a},\mathfrak{q})$, for all $n\gg 0$.
  \item[(b)] $e(\underline{a};A) = c_1\cdots c_d \cdot e(\mathfrak{q};A) + \chi(\underline{a},\mathfrak{q}).$
\end{itemize}
\end{theorem}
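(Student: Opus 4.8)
The plan is to extract both statements from the degree-$n$ short exact sequence of complexes of Definition~\ref{3.1},
\[
0 \to K(\underline{aT};R_A(\mathfrak{q}))_n \to K(\underline{a};A) \to L(\underline{a};A)_n \to 0,
\]
by exploiting the additivity of the Euler characteristic. By Lemma~\ref{3.2} the homology of the left-hand complex has finite length, and the homology of $K(\underline{a};A)$ has finite length because $\underline{a}$ is a system of parameters; the associated long exact homology sequence then squeezes $H_i(L(\underline{a};A)_n)$ between finite-length modules, so it too has finite length and the Euler characteristic is additive along the sequence. This yields, for every $n$,
\[
\chi(K(\underline{a};A)) = \chi(\underline{a},\mathfrak{q};n) + \chi(L(\underline{a};A)_n).
\]
Since $\chi(K(\underline{a};A)) = e(\underline{a};A)$ as recalled in Definition~\ref{3.3}, the entire theorem reduces to evaluating $\chi(L(\underline{a};A)_n)$ for $n \gg 0$.

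First I would restrict to $n > c_1 + \cdots + c_d$, so that every exponent $n - c_{j_1} - \cdots - c_{j_i}$ appearing in the explicit description of $L_i(\underline{a};A)_n$ from Definition~\ref{3.1} is positive and the corresponding summand $A/\mathfrak{q}^{\,n-c_{j_1}-\cdots-c_{j_i}}$ has finite length. Then $L(\underline{a};A)_n$ is a bounded complex of finite-length modules, so by the Euler--Poincar\'e principle its homological Euler characteristic equals the alternating sum of the lengths of its terms:
\[
\chi(L(\underline{a};A)_n) = \sum_{i=0}^d (-1)^i \sum_{1 \leq j_1 < \cdots < j_i \leq d} \ell_A\bigl(A/\mathfrak{q}^{\,n-c_{j_1}-\cdots-c_{j_i}}\bigr).
\]

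The computational core is the evaluation of this sum. Writing $P(m) = \ell_A(A/\mathfrak{q}^m)$ for the Hilbert--Samuel function, which for $m \gg 0$ agrees with a polynomial of degree $d$ and leading coefficient $e(\mathfrak{q};A)/d!$, the displayed expression is precisely the iterated finite difference $\prod_{j=1}^d (1 - E^{-c_j})\,P(n)$, where $E^{-c}$ denotes the shift $P(m) \mapsto P(m-c)$. Each factor $1 - E^{-c_j}$ lowers the degree of a polynomial by one and multiplies its leading coefficient by $c_j$ times the current degree; applying all $d$ factors to $P$ therefore produces a constant, and tracking the leading coefficients through the $d$ successive steps gives exactly $c_1 \cdots c_d\, e(\mathfrak{q};A)$. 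Hence $\chi(L(\underline{a};A)_n) = c_1\cdots c_d\, e(\mathfrak{q};A)$ for all $n \gg 0$. Substituting back,
\[
\chi(\underline{a},\mathfrak{q};n) = e(\underline{a};A) - c_1\cdots c_d\, e(\mathfrak{q};A)
\]
for $n \gg 0$, which is independent of $n$; denoting this constant by $\chi(\underline{a},\mathfrak{q})$ establishes the stabilization asserted in (a) and, upon rearrangement, the formula in (b). Its non-negativity is immediate from Lemma~\ref{2.1}(a).

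I expect the only delicate point to be the finite-difference evaluation: one must check that all lower-order terms of the Hilbert--Samuel polynomial are annihilated by the $d$ difference operators and that the surviving constant carries the correct combinatorial factors, so that the answer is exactly $c_1\cdots c_d\, e(\mathfrak{q};A)$ rather than merely a constant of comparable size. The degenerate behaviour of $L_i(\underline{a};A)_n$ for small $n$ is harmless, since part (a) concerns only $n \gg 0$; imposing $n > \sum_j c_j$ at the outset sidesteps it completely.
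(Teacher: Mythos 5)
Your proposal is correct and follows essentially the same route as the paper's own proof: the short exact sequence of complexes from Definition \ref{3.1}, additivity of Euler characteristics together with $\chi(\underline{a};A)=e(\underline{a};A)$, the identification of $\chi(L(\underline{a};A)_n)$ with the weighted $d$-fold backwards difference of the Hilbert--Samuel function (giving the constant $c_1\cdots c_d\,e(\mathfrak{q};A)$ for $n\gg 0$), and Lemma \ref{2.1} for non-negativity. The only differences are expository: you make explicit the restriction $n>c_1+\cdots+c_d$ ensuring all terms of $L(\underline{a};A)_n$ have finite length, and you carry out the leading-coefficient bookkeeping of the difference operators in detail, both of which the paper leaves implicit.
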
  

\begin{proof} Let us start with the short exact sequence of complexes of $A$-modules 
that was given in the Definition \ref{3.1}. All three complexes have 
homology of finite length. Therefore for each of them we may 
consider its Euler characteristic. By the additivity of the Euler characteristics 
on short exact sequences of complexes it follows that 
\[
\chi(\underline{a};A) = \chi(L(\underline{a};A)_n) + \chi(\underline{a},\mathfrak{q};n).
\]
Clearly $\chi(\underline{a};A) = e(\underline{a};A)$ as follows by the work of 
Auslander and Buchsbaum and Serre (see \cite{AB} and \cite{S}, respectively). Now 
let us continue 
with a calculation of the Euler characteristic of $L(\underline{a};A)_n.$ It is a well 
known fact that 
\[
\chi(L(\underline{a};A)_n) = \sum_{i \geq 0} (-1)^i \ell_A(L_i(\underline{a};A)_n)
\]
because all of the $L_i(\underline{a};A)_n$ are $A$-modules of finite length for all 
$i$ and all $n.$ By the structure of the $L_i(\underline{a};A)_n$  given in 
Definition \ref{3.1} it follows that
\[
\chi(L(\underline{a};A)_n) = \sum_{i \geq 0} (-1)^i (\sum_{1\leq j_1 < \ldots <j_i \leq d} 
\ell_A(A/\mathfrak{q}^{n-c_{j_1}- \cdots c_{j_k}})).
\]
The operation on the right side is the weighted $d$-fold backwards difference operator 
of the Hilbert-Samuel function $\ell_A(A/\mathfrak{q}^n).$ For $n \gg 0$ it is the weighted $d$-fold 
backwards difference operator of the Hilbert-Samuel polynomial. So the value is 
the constant 
\[
c_1\cdots  c_d \cdot e(\mathfrak{q};A).
\] 
This finishes the proof of the claim in (b). By the inequality shown in Lemma \ref{2.1} the proof of (a) 
is also complete. 
\end{proof}

\begin{remark} \label{3.5} The ideas of the proof of Theorem \ref{3.4} may be used in order 
to prove the result of Auslander-Buchsbaum and Serre that $\chi(\underline{a};A) = e(\underline{a};A)$ 
for a system of parameters $\underline{a} = a_1,\ldots,a_d$ of $A.$ To this end 
use the sum 
\[
\chi(\underline{a};A) = \chi(L(\underline{a};A)_n) + \chi(\underline{a},\underline{a};n)
\]
as shown in the proof of Theorem \ref{3.4}. Since $(\underline{aT})H_i(\underline{aT};R_A(\underline{a})) = 0$ 
for all $i \in \mathbb{Z}$ it follows that $[H_i(\underline{aT};R_A(\underline{a})]_n = 0$ for all 
$n \gg 0$ and all $i \in \mathbb{Z}.$ (Recall that $(\underline{aT})$ contains all elements 
of positive degree.) That is $\chi(\underline{a},\underline{a};n) = 0$ for 
all $n \gg 0.$ Therefore $\chi(\underline{a};A) = e(\underline{a};A)$ as follows because 
$\chi(L(\underline{a};A)_n) = e(\underline{a};A)$ for all $n \gg 0$.  

This argument was used also by Rees (see \cite[page 173]{R}). In fact Rees proved  a  more general 
result for  a good $\mathfrak{q}$-filtration on an $R$-module $M$. 
\end{remark}

\section{Characterization of equality}
We have seen in Lemma \ref{2.1} that $c \cdot e(\mathfrak{q},A) \leq e(\underline{a},A),
$ where $c = c_1\cdot \ldots \cdot c_d.$ In the next step we want to characterize the equality. 
We begin with a sufficient condition. 

\begin{theorem} \label{4.2} With the previous notation suppose that $\underline{a^{\star}} = a_1^{\star},\ldots,a_d^{\star}$ is a system of parameters of $G_{A}(\mathfrak{q}).$ 
Then $c_1 \cdots c_d \cdot e(\mathfrak{q},A) = e(\underline{a},A).$ In general the converse 
is not true.
\end{theorem}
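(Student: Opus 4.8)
The plan is to show the equality $c_1\cdots c_d\cdot e(\mathfrak{q},A)=e(\underline{a},A)$ by reducing it to the criterion in Lemma \ref{2.1}(b), namely $e(a_1^{e_1},\ldots,a_d^{e_d};A)=e(\mathfrak{q}^c;A)$, and then using the characterization of systems of parameters in the form ring from Theorem \ref{1.1}. First I would observe that since $\underline{a^{\star}}=a_1^{\star},\ldots,a_d^{\star}$ is assumed to be a system of parameters of $G_A(\mathfrak{q})$, Theorem \ref{1.1} applies and gives us the equivalent conditions there; in particular condition (i) tells us that $(a_1^{e_1},\ldots,a_d^{e_d})A$ is a minimal reduction of $\mathfrak{q}^c$. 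The key point is that two ideals related by a reduction have the same multiplicity, so from the minimal reduction property we immediately obtain $e(a_1^{e_1},\ldots,a_d^{e_d};A)=e(\mathfrak{q}^c;A)$. Feeding this into Lemma \ref{2.1}(b) yields exactly the desired equality $c_1\cdots c_d\cdot e(\mathfrak{q},A)=e(\underline{a},A)$.

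An alternative and perhaps cleaner route, which I would mention, goes directly through the Euler characteristic machinery of Section 3. By Theorem \ref{3.4}(b) we have $e(\underline{a};A)=c_1\cdots c_d\cdot e(\mathfrak{q};A)+\chi(\underline{a},\mathfrak{q})$, so the equality we want is equivalent to the vanishing of the non-negative Euler characteristic $\chi(\underline{a},\mathfrak{q})$. Here the hypothesis that $\underline{a^{\star}}$ is a system of parameters in $G_A(\mathfrak{q})$ should force the graded Koszul homology modules $[H_i(\underline{aT};R_A(\mathfrak{q}))]_n$ to vanish for $n\gg 0$ in every homological degree $i$. The mechanism is that $\underline{a^{\star}}$ being a homogeneous system of parameters makes $G_A(\mathfrak{q})/(\underline{a^{\star}})$ finite-dimensional (its graded pieces vanish in high degree, by condition (iii) of Theorem \ref{1.1}), and this controls the stable-range behaviour of the Koszul homology of $\underline{aT}$ on the Rees algebra, forcing $\chi(\underline{a},\mathfrak{q};n)=0$ for large $n$ and hence $\chi(\underline{a},\mathfrak{q})=0$.

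The main obstacle, in either approach, is the passage from the parameter condition on $\underline{a^{\star}}$ in the associated graded ring to a statement about multiplicities or Koszul homology of the \emph{original} sequence $\underline{a}$ in $A$. In the reduction approach this is handled cleanly by Theorem \ref{1.1}, which does exactly this translation, so the real content is already packaged there; the remaining work is only to invoke that two ideals with a reduction relation share a multiplicity. In the Euler characteristic approach the delicate step is verifying that the finite-dimensionality of $G_A(\mathfrak{q})/(\underline{a^{\star}})$ propagates to the vanishing of all higher graded Koszul homology of $\underline{aT}$ in large degrees, rather than just the zeroth one; this requires tracking the long exact homology sequences from Definition \ref{3.1} and Lemma \ref{3.2} carefully. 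I would favour the reduction argument as the primary proof because it isolates the essential input into Theorem \ref{1.1} and keeps the multiplicity bookkeeping minimal.

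For the final assertion that the converse fails in general, I would simply point back to Examples \ref{2.2}: there the equality of multiplicities holds (for instance $e(a,A)=c\cdot e(\mathfrak{m},A)$ in part (A), and $e(f,g;A)=e(\mathfrak{m};A)$ in part (B)) while $\underline{a^{\star}}$ is demonstrably \emph{not} a system of parameters of $G_A(\mathfrak{q})$, so no further construction is needed. I would note that quasi-unmixedness is precisely the hypothesis, added later in Theorem \ref{0.2}(c), that rescues the converse, which is why the converse can fail without it.
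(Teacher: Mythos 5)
Your primary argument is exactly the paper's proof: invoke Theorem \ref{1.1} to conclude that $(a_1^{e_1},\ldots,a_d^{e_d})A$ is a minimal reduction of $\mathfrak{q}^c$, use the standard fact that a reduction has the same multiplicity to get $e(a_1^{e_1},\ldots,a_d^{e_d};A)=e(\mathfrak{q}^c;A)$, and feed this into Lemma \ref{2.1}(b); the paper likewise disposes of the converse by citing Example \ref{2.2}(B).

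One small correction to your discussion of the converse: Example \ref{2.2}(A) is \emph{not} a counterexample to it. There $a^{\star}=X$ in $G_A(\mathfrak{m})=k[X,Y,Z]/(XZ,YZ,Y^4,Z^2)$, and $G_A(\mathfrak{m})/(X)\simeq k[Y,Z]/(YZ,Y^4,Z^2)$ is Artinian, so $a^{\star}$ \emph{is} a system of parameters of the one-dimensional ring $G_A(\mathfrak{m})$; what fails in (A) is only that $a^{\star}$ is a regular sequence (equivalently, that $G_A(\mathfrak{m})$ is Cohen--Macaulay), which refutes Pritchard's claim but is consistent with Theorem \ref{4.2}. Only Example \ref{2.2}(B), where $f^{\star}=Y,\, g^{\star}=Z$ fail to be a system of parameters of $k[X,Y,Z]/(X)\cap(Y,Z)$ yet the multiplicity equality holds, does the job --- and that is the one the paper uses.
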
 

\begin{proof} Let $\underline{a^{\star}} 
= a_1^{\star}, \ldots,a_d^{\star}$ be a system of parameters of the form ring 
$G_A(\mathfrak{q})$. Whence the ideal $(a_1^{e_1}, \ldots,a_d^{e_d})$ 
is a minimal reduction 
of $\mathfrak{q}^c$ (see \ref{1.1}). Then $e(a_1^{e_1}, \ldots,a_d^{e_d};A) = 
e(\mathfrak{q}^c;A)$, which by Lemma \ref{2.1} implies that $c \cdot e(\mathfrak{q};A) 
= e(\underline{a};A)$. 
Finally, the Example \ref{2.2} (B) shows that the reverse implication does not hold in general. 
\end{proof}

In order to describe a necessary and sufficient condition we need further notation. 
For a local ring $A$ let $u_A(0)$ denote the intersection of those
primary components of the zero ideal that correspond to an associated prime ideal 
$\mathfrak{p}$ with $\dim A/\mathfrak{p} = \dim A.$ Moreover let $\hat{A}$ 
denote the completion of $A.$ In the following put $\overline{A} = \hat{A}/u_{\hat{A}}(0).$ 

\begin{theorem} \label{4.1}With the previous notation the following are equivalent: 
\begin{itemize}
  \item[(i)] $\underline{a^{\star}} = a_1^{\star},\ldots,a_d^{\star}$ is a system of parameters of $G_{\overline{A}}(\mathfrak{q}\overline{A}).$ 
  \item[(ii)] $c_1 \cdots c_d \cdot e(\mathfrak{q};A) = e(\underline{a};A).$ 
  \item[(iii)] The ideal $(a_1^{e_1},\ldots,a_d^{e_d})\overline{A}$ is a minimal reduction 
  of $\mathfrak{q}^c\overline{A}.$ 
\end{itemize}
\end{theorem}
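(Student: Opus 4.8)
The plan is to transport the entire question to the complete, quasi-unmixed ring $\overline{A} = \hat{A}/u_{\hat{A}}(0)$, where the deep converse of Rees's multiplicity theorem becomes available, and then to let Lemma~\ref{2.1} and Theorem~\ref{1.1} carry out the rest. The first task is to check that condition (ii) is unaffected by the passage from $A$ to $\overline{A}$. Completion preserves Hilbert--Samuel multiplicities, so $e(\mathfrak{q};A) = e(\mathfrak{q}\hat{A};\hat{A})$ and $e(\underline{a};A) = e(\underline{a}\hat{A};\hat{A})$; and forming $\overline{A}$ discards exactly the primary components of $\hat{A}$ of dimension $< \dim \hat{A}$, which by the associativity formula for multiplicity contribute nothing. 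Hence $e(\mathfrak{q}\hat{A};\hat{A}) = e(\mathfrak{q}\overline{A};\overline{A})$ and likewise for $\underline{a}$, so (ii) holds over $A$ if and only if $c_1\cdots c_d\, e(\mathfrak{q}\overline{A};\overline{A}) = e(\underline{a}\overline{A};\overline{A})$. Being complete and equidimensional, $\overline{A}$ is quasi-unmixed, which is precisely the hypothesis needed below.

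Next I would settle the equivalence (ii) $\Leftrightarrow$ (iii) over $\overline{A}$. Since $a_1^{e_1},\ldots,a_d^{e_d}$ is a system of parameters of $\overline{A}$, the power formula $e(a_1^{e_1},\ldots,a_d^{e_d};\overline{A}) = e_1\cdots e_d\cdot e(\underline{a};\overline{A})$ together with $e_1\cdots e_d = c^{\,d-1}$ and $e(\mathfrak{q}^c\overline{A};\overline{A}) = c^{\,d}\,e(\mathfrak{q}\overline{A};\overline{A})$ turns (ii) into the single equation $e(a_1^{e_1},\ldots,a_d^{e_d};\overline{A}) = e(\mathfrak{q}^c\overline{A};\overline{A})$, in the presence of the containment $(a_1^{e_1},\ldots,a_d^{e_d})\overline{A} \subseteq \mathfrak{q}^c\overline{A}$. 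For (iii) $\Rightarrow$ (ii) this is immediate, a reduction sharing the multiplicity of the ideal it reduces. The reverse implication (ii) $\Rightarrow$ (iii) is the heart of the matter: here I would invoke Rees's theorem (see \cite{SH}), whose converse direction asserts that two nested ideals of equal multiplicity in a quasi-unmixed local ring have the smaller as a reduction of the larger. As $\overline{A}$ is quasi-unmixed this applies, yielding that $(a_1^{e_1},\ldots,a_d^{e_d})\overline{A}$ is a reduction of $\mathfrak{q}^c\overline{A}$, and being generated by $d = \dim\overline{A}$ elements it is a minimal one. This appeal to Rees's theorem, together with the fact that it forces the replacement of $A$ by $\overline{A}$, is the main obstacle and the only genuinely nonformal ingredient.

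Finally I would identify (i) with (iii) by applying Theorem~\ref{1.1} to the ring $\overline{A}$, under which its conditions (i) and (iii) read exactly as our (iii) and (i). The one point requiring care is that Theorem~\ref{1.1} is calibrated to the initial orders of the $a_i$ in $\mathfrak{q}\overline{A}$, whereas (i)--(iii) carry the orders $c_i$ computed in $A$, and a priori the order $\bar{c}_i$ of $\bar{a}_i$ can only rise under $\hat{A}\to\overline{A}$. I would close this gap by observing that both (iii) and (i) already force order preservation: under (iii) the equivalent equality (ii), combined with the basic inequality of Lemma~\ref{2.1} over $\overline{A}$, gives $\prod_i \bar{c}_i \le \prod_i c_i$ with $\bar{c}_i \ge c_i$, whence $\bar{c}_i = c_i$ for all $i$; while under (i) each class $a_i^{\star}$ is nonzero in degree $c_i$, so $\bar{a}_i \notin (\mathfrak{q}\overline{A})^{c_i+1}$ and again $\bar{c}_i = c_i$. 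With the orders preserved the data $c$ and $e_i$ agree over $A$ and over $\overline{A}$, Theorem~\ref{1.1} applies verbatim, and the cycle of implications closes. What remains is the routine bookkeeping of how multiplicities and initial orders survive completion and the removal of the lower-dimensional components, the substantive content being concentrated entirely in the use of Rees's theorem.
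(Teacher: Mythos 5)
Your proof is correct and takes essentially the same route as the paper: reduce to $\overline{A}=\hat{A}/u_{\hat{A}}(0)$ via invariance of multiplicities, use Lemma \ref{2.1} to turn (ii) into $e(a_1^{e_1},\ldots,a_d^{e_d};\overline{A})=e(\mathfrak{q}^c\overline{A};\overline{A})$, apply Rees's theorem over the quasi-unmixed ring $\overline{A}$ for (ii)$\Leftrightarrow$(iii), and invoke Theorem \ref{1.1} for (i)$\Leftrightarrow$(iii). The only difference is your explicit check that the initial orders $c_i$ are not increased by the passage from $\hat{A}$ to $\overline{A}$ under each of the three conditions --- a point the paper's ``without loss of generality we may assume $A=\overline{A}$'' step passes over in silence --- so your write-up supplies a detail the published proof omits rather than diverging from it.
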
 

\begin{proof} 
First of all we note that $e(\underline{a};A) = e(\underline{a};\hat{A})$ and $e(\mathfrak{q};A) = 
e(\mathfrak{q};\hat{A})$ as easily seen. Second, by applying \cite[Theorem 14.7]{Ma} it follows 
that $e(\underline{a};\hat{A}) = e(\underline{a};\hat{A}/u_{\hat{A}}(0))$ and 
$e(\mathfrak{q};\hat{A}) = e(\mathfrak{q};\hat{A}/u_{\hat{A}}(0)).$ That is, without loss 
of generality we may assume that $A= \overline{A}.$ 

Then the equivalence of (i) and (iii) is shown in Theorem \ref{1.1}. By Lemma \ref{2.1} it follows that 
the condition (ii) is equivalent to the equality $e(a_1^{e_1},\ldots,a_d^{e_d};A) = e(\mathfrak{q}^c;A).$ 
Under the assumption that $A$ is quasi-unmixed  the last equality is equivalent to (iii). This 
is true by Rees' Theorem (see \cite{Re} or \cite[11.3.1]{SH}).
\end{proof}

\begin{corollary} \label{4.3} Let $(A,\mathfrak{m})$ denote a quasi-unmixed local ring. 
Then the following conditions are equivalent: 
\begin{itemize}
  \item[(i)] $\underline{a^{\star}} = a_1^{\star},\ldots,a_d^{\star}$ is a system of parameters of $G_A(\mathfrak{q}).$ 
  \item[(ii)] $c_1 \cdots c_d \cdot e(\mathfrak{q};A) = e(\underline{a};A).$ 
  \item[(iii)] The ideal $(a_1^{e_1},\ldots,a_d^{e_d})$ is a minimal reduction 
  of $\mathfrak{q}^c.$ 
\end{itemize}
\end{corollary}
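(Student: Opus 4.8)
The plan is to assemble the three-way equivalence from ingredients already established, and in particular to avoid passing to $\overline{A}$ at all, arguing directly over $A$. The three tools are: Theorem \ref{1.1}, which links the system-of-parameters condition (i) to the minimal-reduction condition (iii); Lemma \ref{2.1}, which reformulates the multiplicity equality (ii) as the numerical equality $e(a_1^{e_1},\ldots,a_d^{e_d};A) = e(\mathfrak{q}^c;A)$; and Rees' theorem, which supplies the single step that genuinely exploits quasi-unmixedness. In contrast to Theorem \ref{4.1}, where conditions (i) and (iii) had to be read off over $\overline{A}$, here the hypothesis lets me keep everything over $A$.

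I would first dispose of the equivalence (i) $\Leftrightarrow$ (iii), which needs no hypothesis on $A$ whatsoever: it is exactly the equivalence of statements (iii) and (i) in Theorem \ref{1.1} applied to $A$ itself. Thus quasi-unmixedness enters only to tie condition (ii) into the equivalence. For the easy implication (iii) $\Rightarrow$ (ii) I would invoke that passing to a reduction leaves the multiplicity unchanged: if $(a_1^{e_1},\ldots,a_d^{e_d})A$ is a minimal reduction of $\mathfrak{q}^c$, then $e(a_1^{e_1},\ldots,a_d^{e_d};A) = e(\mathfrak{q}^c;A)$, and Lemma \ref{2.1}(b) converts this equality into (ii). This step is purely formal and uses no special hypothesis.

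The crux is the reverse implication (ii) $\Rightarrow$ (iii), and this is the only place the quasi-unmixed hypothesis is indispensable. Starting from (ii), Lemma \ref{2.1}(b) yields $e(a_1^{e_1},\ldots,a_d^{e_d};A) = e(\mathfrak{q}^c;A)$, and we already have the containment $(a_1^{e_1},\ldots,a_d^{e_d})A \subseteq \mathfrak{q}^c$ of two $\mathfrak{m}$-primary ideals. Rees' theorem for formally equidimensional local rings (see \cite{Re} or \cite[11.3.1]{SH}) then forces $(a_1^{e_1},\ldots,a_d^{e_d})A$ to be a reduction of $\mathfrak{q}^c$. Since this reduction is generated by $d = \dim A$ elements and $\mathfrak{q}^c$ is $\mathfrak{m}$-primary, so that the analytic spread equals $d$, the reduction is automatically minimal, which is (iii); closing the cycle (i) $\Leftrightarrow$ (iii) $\Leftrightarrow$ (ii) is then the assertion. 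I expect the single delicate point to be the precise invocation of Rees' theorem: one must verify that its equidimensionality requirement is exactly what quasi-unmixedness of $A$ provides, and that no passage to the completion is needed, since both the multiplicity equality and the ideal containment hold verbatim over $A$. The upgrade from ``reduction'' to ``minimal reduction'' via the generator count is routine and should be noted only in passing.
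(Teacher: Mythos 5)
Your proof is correct, but it takes a different route from the paper's own proof of this corollary. The paper disposes of Corollary \ref{4.3} in two lines: it invokes Theorem \ref{4.1}, observes that quasi-unmixedness forces $u_{\hat{A}}(0) = 0$ (so that $\overline{A} = \hat{A}$), and then transfers conditions (i)--(iii) between $A$ and $\hat{A}$ using that $\hat{A}$ is faithfully flat over $A$. You instead bypass Theorem \ref{4.1} and the completion entirely, and in effect re-run the argument underlying Theorem \ref{4.1} directly over $A$: (i) $\Leftrightarrow$ (iii) from Theorem \ref{1.1}, (iii) $\Rightarrow$ (ii) from the multiplicity-invariance of reductions plus Lemma \ref{2.1}(b), and (ii) $\Rightarrow$ (iii) from Lemma \ref{2.1}(b) plus Rees' theorem for formally equidimensional rings \cite[11.3.1]{SH}, whose hypothesis is exactly quasi-unmixedness of $A$. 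Your approach buys self-containedness and sidesteps two points the paper's proof must address: the descent of (i) and (iii) along $A \to \hat{A}$, and the assertion $u_{\hat{A}}(0) = 0$ --- which is actually delicate, since for a merely quasi-unmixed ring $u_{\hat{A}}(0)$ need only be nilpotent (all associated primes of maximal dimension are minimal, but embedded components may survive; consider $k[[x,y]]/(x^2,xy)$), so strictly speaking the paper's reduction uses unmixedness there. What the paper's route buys is brevity and reuse: the corollary becomes a formal specialization of the already-proved Theorem \ref{4.1}. The one step you label routine --- upgrading ``reduction'' to ``minimal reduction'' --- does deserve its one-line justification, namely that a reduction of the $\mathfrak{m}$-primary ideal $\mathfrak{q}^c$ generated by $d = \ell(\mathfrak{q}^c)$ elements is a parameter ideal, and parameter ideals admit no proper reductions because systems of parameters are analytically independent; this holds over any residue field, so there is no gap.
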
 

\begin{proof} It is a consequence of Theorem \ref{4.1}. To this end note that $u_{\hat{A}}(0) = 0$.
Moreover the statements in (i), (ii) and (iii) are equivalent to the corresponding statements for $\hat{A}$. To 
this end recall that $\hat{A}$ is a faithfully flat $A$-module.
\end{proof}

\section{The two dimensional situation} 
In this subsection we want to give an interpretation of the formula of Theorem 
\ref{3.4} (b) in the case of $\dim A = 2.$ To this end we modify our 
notation slightly. In this section we fix $(A, \mathfrak{m})$ a local 
two dimensional ring. Let $a,b$ denote a system of parameters of $A.$ As before let 
$\mathfrak{q}$ denote an $\mathfrak{m}$-primary ideal. We choose 
$c, d \in \mathbb N$ such that $a \in \mathfrak{q}^c \setminus \mathfrak{q}^{c+1}$ and 
$b \in \mathfrak{q}^d \setminus \mathfrak{q}^{d+1}.$ 

\begin{theorem} \label{5.1} We fix the above notation. Suppose that $a^{\star}$ 
is a $G_A(\mathfrak{q})$-regular element. Then the following hold:
\begin{itemize}
  \item[(a)]  The length $\ell_A((aA , \mathfrak{q}^n):_A b/(aA,\mathfrak{q}^{n-d}))$ 
  is a constant for all $n \gg 0.$ 
  \item[(b)] $\ell_A(A/(a,b)) = c\cdot d \cdot e(\mathfrak{q};A) +  
  \ell_A((aA , \mathfrak{q}^n):_A b/(aA,\mathfrak{q}^{n-d}))$ for all $n \gg 0.$ 
\end{itemize}
\end{theorem}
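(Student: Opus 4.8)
The plan is to realise the length $\ell_A\big((aA,\mathfrak{q}^n):_A b/(aA,\mathfrak{q}^{n-d})\big)$ as the kernel of a multiplication-by-$b$ map between two Hilbert--Samuel quotients of the hyperplane section $\bar{A} := A/aA$, and then to read off both (a) and (b) from the Hilbert--Samuel polynomial of $\bar{A}$ with respect to $\bar{\mathfrak{q}} := \mathfrak{q}\bar{A}$. Since $b \in \mathfrak{q}^d$ gives $b\mathfrak{q}^{n-d} \subseteq \mathfrak{q}^n$ and trivially $baA \subseteq aA$, multiplication by $b$ induces an $A$-linear map
\[
A/(aA+\mathfrak{q}^{n-d}) \xrightarrow{\, b\,} A/(aA+\mathfrak{q}^n),
\]
whose kernel is exactly $(aA,\mathfrak{q}^n):_A b/(aA,\mathfrak{q}^{n-d})$ and whose cokernel is $A/((a,b)+\mathfrak{q}^n)$. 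This produces the four-term exact sequence of finite-length $A$-modules
\[
0 \to \frac{(aA,\mathfrak{q}^n):_A b}{(aA,\mathfrak{q}^{n-d})} \to A/(aA+\mathfrak{q}^{n-d}) \xrightarrow{\, b\,} A/(aA+\mathfrak{q}^n) \to A/((a,b)+\mathfrak{q}^n) \to 0,
\]
and the entire statement will follow by taking alternating lengths.

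The one input that genuinely uses the hypothesis is the leading term of the Hilbert--Samuel function of $\bar{A}$. Since $a^{\star}$ is a $G_A(\mathfrak{q})$-regular element of degree $c$, the Valabrega--Valla relation quoted in the proof of Theorem \ref{1.2} gives $aA \cap \mathfrak{q}^m = a\mathfrak{q}^{m-c}$ for all $m$; by the modular law this identifies $\bar{\mathfrak{q}}^m/\bar{\mathfrak{q}}^{m+1} \cong \mathfrak{q}^m/(\mathfrak{q}^{m+1}+a\mathfrak{q}^{m-c}) = [G_A(\mathfrak{q})/(a^{\star})]_m$. The short exact sequence $0 \to G_A(\mathfrak{q})(-c) \xrightarrow{a^{\star}} G_A(\mathfrak{q}) \to G_A(\mathfrak{q})/(a^{\star}) \to 0$ then gives $\ell_A(\bar{\mathfrak{q}}^m/\bar{\mathfrak{q}}^{m+1}) = h(m)-h(m-c)$ with $h(m) = \ell_A(\mathfrak{q}^m/\mathfrak{q}^{m+1})$. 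As $\dim A = 2$, the function $h$ is eventually linear with leading coefficient $e(\mathfrak{q};A)$, so the difference stabilises to $c\,e(\mathfrak{q};A)$; in particular $\dim \bar{A} = 1$ and $e(\bar{\mathfrak{q}};\bar{A}) = c\,e(\mathfrak{q};A)$. Consequently $\ell_A(A/(aA+\mathfrak{q}^m))$ agrees for $m \gg 0$ with a degree-one polynomial $P(m)$ whose leading coefficient is $c\,e(\mathfrak{q};A)$.

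Taking alternating lengths in the exact sequence yields, for $n \gg 0$,
\[
\ell_A\!\Big(\frac{(aA,\mathfrak{q}^n):_A b}{(aA,\mathfrak{q}^{n-d})}\Big) = P(n-d) - P(n) + \ell_A\big(A/((a,b)+\mathfrak{q}^n)\big).
\]
Because $(a,b)$ is $\mathfrak{m}$-primary we have $\mathfrak{q}^n \subseteq (a,b)$ for $n \gg 0$, so the last term equals $\ell_A(A/(a,b))$; and $P(n-d) - P(n) = -cd\,e(\mathfrak{q};A)$ since $P$ is linear with leading coefficient $c\,e(\mathfrak{q};A)$. Hence the displayed length equals $\ell_A(A/(a,b)) - cd\,e(\mathfrak{q};A)$, which is independent of $n$ for $n \gg 0$ — this is (a) — and rearranging gives (b). The only nontrivial step is the multiplicity identity $e(\bar{\mathfrak{q}};\bar{A}) = c\,e(\mathfrak{q};A)$ for the hyperplane section, which is precisely where the regularity of $a^{\star}$ enters; everything else is bookkeeping with the exact sequence and the eventual polynomiality of the Hilbert--Samuel function.
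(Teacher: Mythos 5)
Your proof is correct, and it takes a genuinely different route from the one in the paper. The paper's proof stays inside the Koszul--Rees framework of Definition \ref{3.1}: it takes the degree-$n$ complex
\[
L(\underline{a},\mathfrak{q};n)\colon \; 0 \to A/\mathfrak{q}^{n-c-d} \stackrel{\alpha}{\longrightarrow} A/\mathfrak{q}^{n-d}\oplus A/\mathfrak{q}^{n-c} \stackrel{\beta}{\longrightarrow} A/\mathfrak{q}^{n} \to 0,
\]
observes (as in the proof of Theorem \ref{3.4}) that the alternating sum of the lengths of its terms is the weighted second backwards difference of the Hilbert--Samuel polynomial, hence equals $c\cdot d\cdot e(\mathfrak{q};A)$ for $n \gg 0$, and then computes all three homology modules: $H_0 \simeq A/(a,b)$, $H_2 = \ker\alpha = 0$, and $H_1 \simeq (aA,\mathfrak{q}^n):_A b/(aA,\mathfrak{q}^{n-d})$ via an explicitly constructed map $\Phi$. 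The regularity of $a^{\star}$ enters there twice, through the colon identities $\mathfrak{q}^m :_A a = \mathfrak{q}^{m-c}$ (once to kill $H_2$, once to prove $\ker\Phi = \Img\alpha$). You instead eliminate $a$ at the outset: over the hyperplane section $\bar{A} = A/aA$ the colon module is literally the kernel of multiplication by $b$ between Hilbert--Samuel quotients, so no homological identification is needed at all (your four-term sequence uses no hypothesis), and the assumption on $a^{\star}$ is consumed exactly once, through Valabrega--Valla, to get $G_{\bar{A}}(\mathfrak{q}\bar{A}) \simeq G_A(\mathfrak{q})/(a^{\star})$ and hence the slope $c\, e(\mathfrak{q};A)$ of the Hilbert--Samuel polynomial of $\bar{A}$. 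What your argument buys is economy and transparency: it is self-contained (no Rees algebra, no Koszul complex, no appeal to Theorem \ref{3.4}), and it isolates the real content of the theorem as the multiplicity identity $e(\mathfrak{q}\bar{A};\bar{A}) = c\,e(\mathfrak{q};A)$ for the hyperplane section. What the paper's heavier route buys is structural: it exhibits the correction term as $\ell_A(H_1)$ of a concrete complex tied to the Euler characteristic $\chi(\underline{a},\mathfrak{q})$ of Theorem \ref{3.4}, which is precisely the object that Proposition \ref{5.2} dissects further (into the tangent number $t$ and the residual invariant $\ell$ used in Theorem \ref{6.1}) and that extends to arbitrary dimension $d$ and to the blow-up formulas of the later sections.
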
 

\begin{proof} For an integer $n \in \mathbb N$ let $L(\underline{a},\mathfrak{q};n)$ 
denote the complex introduced in Definition \ref{3.1}.  In this particular situation 
it is the following 
\[
L(\underline{a},\mathfrak{q};n):\quad  0 \to A/\mathfrak{q}^{n-c-d} \stackrel{\alpha}{\longrightarrow} 
A/\mathfrak{q}^{n-d} \oplus A/\mathfrak{q}^{n-c} \stackrel{\beta}{\longrightarrow} A/\mathfrak{q}^{n} \to 0.
\]
The homomorphism $\alpha$ is given as 
\[
\alpha : r + \mathfrak{q}^{n-c-d} \mapsto (ra + \mathfrak{q}^{n-d}, rb + \mathfrak{q}^{n-c} )
\] 
while the homomorphism $\beta$ is defined by 
\[
\beta : (s + \mathfrak{q}^{n-d}, t + \mathfrak{q}^{n-c}) \mapsto sb - ta + \mathfrak{q}^n
\]
for all $n \in \mathbb N.$ 
The alternating sum of the length of all the modules of this complex gives (as indicated 
in the proof of Theorem \ref{3.4}) for all $n \gg 0$ the value $c\cdot d \cdot e(\mathfrak{q};A).$ 
Moreover, this coincides for all $n \gg 0$ with the Euler characteristic 
$\chi(\underline{a},\mathfrak{q};n).$ That is, 
\[
\chi(\underline{a},\mathfrak{q};n) = \ell_A(H_0(L(\underline{a},\mathfrak{q};n))) 
- \ell_A(H_1(L(\underline{a},\mathfrak{q};n))) + \ell_A(H_2(L(\underline{a},\mathfrak{q};n))). 
\]
In the next we shall calculate all of the homology modules that are involved. Clearly 
$H_0(L(\underline{a},\mathfrak{q};n)) \simeq A/((a,b)A , \mathfrak{q}^n).$ Since $\mathfrak{q}$ 
is an $\mathfrak{m}$-primary ideal and $a,b$ is a system of parameters it follows 
that $H_0(L(\underline{a},\mathfrak{q};n)) \simeq A/(a,b)A$ for all $n \gg 0.$ 
Furthermore, the second homology module is equal to 
\[
\ker \alpha = (\mathfrak{q}^{n-d}:_A a) \cap (\mathfrak{q}^{n-c} :_A b)/\mathfrak{q}^{n-c-d}.
\] 
This vanishes for $n \gg 0$ since $\mathfrak{q}^{n-d}:_A a =  
\mathfrak{q}^{n-c-d}.$ Recall that by our assumption $a^{\star}$  is 
a $G_A(\mathfrak{q})$-regular element.

Finally we investigate the first homology module. To this end we define  a 
homomorphism 
\[
\Phi : \ker \beta \to (aA , \mathfrak{q}^n):_A b/(aA,\mathfrak{q}^{n-d}) , \;
(s + \mathfrak{q}^{n-d}, t + \mathfrak{q}^{n-c}) \mapsto s + (aA , \mathfrak{q}^{n-d}).
\] 
Because $(s + \mathfrak{q}^{n-d}, t + \mathfrak{q}^{n-c}) \in \ker \beta$ 
it implies that $s \in (aA , \mathfrak{q}^n):_A b.$ Moreover $\Phi$ is surjective as 
$s + (aA ,\mathfrak{q}^{n-d} )\in  (aA, \mathfrak{q}^n ): b/(aA, \mathfrak{q}^{n-d})$ implies 
that $sb - ta \in \mathfrak{q}^n$ for a certain $t \in A.$ That is, 
$(s + \mathfrak{q}^{n-d}, t + \mathfrak{q}^{n-c}) \in \ker \beta.$ 

Now let us show that  $\ker \Phi = \Img \alpha.$ We have that 
$(s + \mathfrak{q}^{n-d}, t + \mathfrak{q}^{n-c}) \in \ker \beta$ if and only 
if $sb - ta \in \mathfrak{q}^n.$  Such an element belongs to $\ker \Phi$ if and only if 
$s- x a \in \mathfrak{q}^{n-d}$ for some $x \in A.$ 
Therefore $xab-ta \in \mathfrak{q}^n$ and $xb-t \in \mathfrak{q}^n :_A a = \mathfrak{q}^{n-c}.$ 
Here we used that $a^{\star}$ is $G_A(\mathfrak{q})$-regular. Finally that implies 
\[
(s + \mathfrak{q}^{n-d}, t + \mathfrak{q}^{n-c}) = (xa + \mathfrak{q}^{n-d}, xb + 
\mathfrak{q}^{n-c}) \in \Img \alpha.
\]
Because the converse is clear this finishes the proof of the isomorphism. 

So the Euler characteristic is completely described and therefore both of the statements  
are shown. 
\end{proof} 

For some  geometric applications in the next section we want to describe the length 
of $H_1(L(\underline{a},\mathfrak{q};n))$ for $n \gg 0$ in a different context.

\begin{proposition} \label{5.2} With the above notation suppose that $a^{\star}$ 
	is a regular element. Then it follows that 
\[
 \ell_A((aA , \mathfrak{q}^n):_A b/(aA,\mathfrak{q}^{n-d})) = \ell_A([a^{\star} G_A(\mathfrak{q}) : 
 b^{\star} /a^{\star} G_A(\mathfrak{q})]_{n-1-d}) + \ell_n,
 \]
 where $\ell_n = 
 \ell_A((aA,\mathfrak{q}^n):_A b/(aA,\mathfrak{q}^n):_A b\cap (aA, \mathfrak{q}^{n-d-1})).$
Moreover, all of the lengths are constants for all $n \gg 0.$ We put $\ell := \ell_n$ for  all $n \gg 0.$ 
\end{proposition}

\begin{proof} Because $a^{\star}$ is an $G_A(\mathfrak{q})$-regular element it follows that 
$G_A(\mathfrak{q})/(a^{\star}) \simeq G_{A/aA}(\mathfrak{q}A/aA)$ and therefore 
\[
[a^{\star} G_A(\mathfrak{q}) :  b^{\star} /a^{\star} G_A(\mathfrak{q})]_{n} \simeq 
(aA,\mathfrak{q}^{n+d+1}) : b \cap (aA, \mathfrak{q}^{n})/(aA,\mathfrak{q}^{n+1})
\] 
as it is easily seen. So there is the following short exact sequence 
\begin{gather*}
0 \to [a^{\star} G_A(\mathfrak{q}) :  b^{\star} /a^{\star} G_A(\mathfrak{q})]_{n-1-d} \to 
(aA , \mathfrak{q}^n):_A b/(aA,\mathfrak{q}^{n-d}) \to \\
(aA,\mathfrak{q}^n):_A b/(aA,\mathfrak{q}^n):_A b\cap (aA, \mathfrak{q}^{n-d-1}) \to 0.
\end{gather*}
By  counting the lengths we obtain the equality of the proposition. The length of the module 
in the middle is constant for $n \gg 0$ since  $G_{A/aA}(\mathfrak{q}A/aA)$ is of dimension one. 
By comparing the Hilbert polynomials this proves the final statement. 
\end{proof} 

\section{A first geometric application}
Let $k$ be an algebraically closed field. Let $C = V(f), D = V(g)$ be two plane curves without any 
common component. Let$f, g\in k[x,y]$ denote the defining 
equations. Suppose that $0 \in C\cap D.$ Let $A= k[x,y]_{(x,y)}$ denote the local 
ring at the origin. Moreover let $c,d$ denote the initial degree of $f$ and $g$ respectively. 
Then a classical result says that $e(f,g;A) \geq c\cdot d$ (see e.g. \cite[6.1]{BK} and \cite[8.6]{Fi}). The proof 
by Brieskorn and Kn\"orrer  needs resultants, while the proof by Fischer  uses Puiseux expansions. 
It is shown that equality holds if and only if 
$C$ and $D$ intersects transversally. That is, if and only the initial forms $f^{\star}, g^{\star}$ 
are a system of parameters in $G_A(\mathfrak{m}) = k[X,Y].$ 
In the following we want to present an improvement of this result mentioned by 
Byd\u{z}ovsk\'y (see \cite[Chap. XI, 134]{By}) and a further sharpening. 

\begin{theorem} \label{6.1} With the notation of the beginning of this section there is 
the following equality
\[
e(f,g;A) = c\cdot d + t + \ell_A((fA,\mathfrak{m}^n):_A g/(fA,\mathfrak{m}^n):_A g)\cap 
(fA, \mathfrak{m}^{n-d-1}))
\]
for all $n \gg 0,$ where $t$ denotes the number of common tangents counted with multiplicities. 
In particular $e(f,g;A) = c \cdot d + t + \ell \geq  c\cdot d + t$, where $\ell$ denotes 
the ultimative constant value of $\ell_A((fA,\mathfrak{m}^n):_A g/(fA,\mathfrak{m}^n):_A g)\cap 
(fA, \mathfrak{m}^{n-d-1}))$ for $n \gg 0$.
\end{theorem}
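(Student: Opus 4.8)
The plan is to specialize the machinery of Section 5 to the regular local ring $A = k[x,y]_{(x,y)}$ with $\mathfrak{q} = \mathfrak{m}$, and then to read off the graded colon term as the number of common tangents. First I would observe that since $C$ and $D$ share no component, the ideal $(f,g)A$ is $\mathfrak{m}$-primary, so $f,g$ is a system of parameters, and the form ring $G_A(\mathfrak{m}) = k[X,Y]$ is an integral domain. Hence the initial form $f^{\star}$, being a nonzero homogeneous element, is automatically $G_A(\mathfrak{m})$-regular, and both Theorem \ref{5.1} and Proposition \ref{5.2} apply with $a = f$, $b = g$. Because $A$ is regular we have $e(\mathfrak{m};A) = 1$ and $e(f,g;A) = \ell_A(A/(f,g))$, so Theorem \ref{5.1}(b) gives
\[
e(f,g;A) = c \cdot d + \ell_A((fA, \mathfrak{m}^n):_A g/(fA,\mathfrak{m}^{n-d}))
\]
for all $n \gg 0$.

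Next I would invoke Proposition \ref{5.2} to split the colon length as
\[
\ell_A((fA, \mathfrak{m}^n):_A g/(fA,\mathfrak{m}^{n-d})) = \ell_A([f^{\star}G_A(\mathfrak{m}):g^{\star}/f^{\star}G_A(\mathfrak{m})]_{n-1-d}) + \ell,
\]
where $\ell$ is exactly the ultimate value of the length appearing in the statement. It then remains to identify the graded piece on the right with $t$ for all $n \gg 0$, after which substituting back yields $e(f,g;A) = c\cdot d + t + \ell$, and the inequality $e(f,g;A) \geq c\cdot d + t$ follows from $\ell \geq 0$.

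The key computation takes place in the UFD $k[X,Y]$. Writing $p = \gcd(f^{\star},g^{\star})$, I would use unique factorization to identify $f^{\star}G_A(\mathfrak{m}):g^{\star} = (f^{\star}/p)$, whence multiplication by $f^{\star}/p$ gives an isomorphism of graded $k[X,Y]$-modules
\[
f^{\star}G_A(\mathfrak{m}):g^{\star}/f^{\star}G_A(\mathfrak{m}) \cong (k[X,Y]/(p))(-\deg(f^{\star}/p)).
\]
Since $k$ is algebraically closed, $p$ splits into linear forms and $\deg p = t$ is precisely the number of common tangent lines of the tangent cones $V(f^{\star})$, $V(g^{\star})$ counted with multiplicity. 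A direct Hilbert-function count shows $\dim_k[k[X,Y]/(p)]_m = t$ for all $m \geq \deg p$, so once $n \gg 0$ the component in degree $n-1-d$ above has length $t$, as required.

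The main obstacle is this clean identification of the graded colon module with $t$: one must verify that the common components of the tangent cones are governed exactly by $\gcd(f^{\star},g^{\star})$ and that its degree records the common tangents with the correct multiplicities. This is where the two standing hypotheses do the essential work — that $k$ is algebraically closed, so that the forms $f^{\star}, g^{\star}$ factor into linear pieces, and that $C, D$ have no common component, so that $f^{\star}, g^{\star}$ are nonzero forms and $f, g$ is a system of parameters to which Section 5 applies.
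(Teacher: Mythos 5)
Your proposal is correct and follows essentially the same route as the paper: reduce via Theorem \ref{5.1} and Proposition \ref{5.2} to identifying $\ell_A([f^{\star}B : g^{\star}/f^{\star}B]_n)$ with $t$ for $n \gg 0$, then compute the colon module in $B = k[X,Y]$ using the common factor $h = \gcd(f^{\star},g^{\star})$ to get $B/hB$ up to a shift, whose Hilbert function is eventually $\deg h = t$. The only cosmetic difference is that the paper treats the transversal case ($t=0$) separately via Theorem \ref{4.2}, whereas your gcd argument handles it uniformly.
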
 

\begin{proof} First note that $e(f,g;A) = \ell_A(A/(f,g)A).$ Therefore we apply the 
results of the previous sections. By  Theorem \ref{5.1} and  Proposition \ref{5.2} 
it will be enough to show that $t = \ell_A([f^{\star}B : g^{\star}/f^{\star}B]_n)$ for 
$n \gg 0,$ where $B = k[X,Y].$ We have $t = 0$ if and only if $f^{\star}, g^{\star}$ 
is a homogeneous system of parameters in $B,$ that is $C,D$ meet transversally in 
$0.$ Then $e(f,g;A) = c\cdot d$ (see  Theorem \ref{4.2}). So assume that $C$ and $D$ 
do not meet transversally. Then $f^{\star}, g^{\star}$ have a common factor $h$ and 
$f^{\star} = hr, g^{\star} = hs$ for two homogeneous polynomials $r, s \in B$ that are 
relatively prime. The degree $t :=\deg h$ denotes the number of common tangents 
counted with multiplicities. Then 
\[
f^{\star}B : g^{\star}/f^{\star}B \simeq rB/hrB \simeq B/hB(-u), \; u = \deg r.
\] 
Using this isomorphisms for $n \gg 0$ it follows that $\ell_A([f^{\star}B : g^{\star}/f^{\star}B]_n )
= t.$ This completes the proof of the statement. 
\end{proof}

The estimate $e(f,g;A) \geq  c\cdot d + t$ was proved by  Byd\u{z}ovsk\'y (see 
\cite[Kap. XI, 134]{By}). This is done by a study of resultants similar to the approach in 
\cite{BK}. Moreover it is not clear to the authors how to give 
a geometric interpretation of the correcting term in Theorem \ref{6.1}. We illustrate this 
investigations  with a few  examples (see Section 10).  

\begin{problem} \label{6.2} The authors do not know a geometric interpretation of 
the constant 
\[
\ell = \ell_A((fA,\mathfrak{m}^n):_A g/(fA,\mathfrak{m}^n):_A g)\cap 
(fA, \mathfrak{m}^{n-d-1})).
\]
This problem could be related to an interpretation of the integer $\ell$ as it was investigated in 
Proposition \ref{5.2} in homological terms.
\end{problem}

\section{The use of blowing ups} 
As above let $\mathfrak{q} = (q_1,\ldots,q_s)A$ denote an $\mathfrak{m}$-primary 
ideal of $A$ and $\underline{a} = a_1,\ldots,a_d, d = \dim A,$ a system of 
parameters contained in $\mathfrak{q}$. An affine covering of $\Proj R_A(\mathfrak{q})$ 
is given by $\Spec A[\mathfrak{q}/q_i], i = 1, \ldots,s$. The affine rings 
$A[\mathfrak{q}/q_i]$ are obtained as the degree zero components of the 
localizations $R_A(\mathfrak{q})_{q_iT}, i = 1,\ldots,s$. It follows that 
\[
R_A(\mathfrak{q})_{q_iT} \simeq A[\mathfrak{q}/q_i][q_iT,(q_iT)^{-1}]
\] 
(see \cite[Proposition 5.5.8]{SH} for the details). In the following we will examine the 
localization of the Koszul complex $K(\underline{aT};R_A(\mathfrak{q})_{q_iT}), i = 1,\ldots 
s$. For the notation of the strict transform we refer to \cite[(13.13)]{HIO}. 

\begin{lemma} \label{7.0} Fix the previous notation and assumptions. 
Then there are isomorphisms
\begin{gather*}
K(\underline{aT};R_A(\mathfrak{q})_{q_iT})_{\geq 0} \simeq K(\underline{\tilde{a}_i}; R_{A[\mathfrak{q}/q_i]}(\mathfrak{q}A[\mathfrak{q}/q_i]) 
\text{ and } \\
[K(\underline{aT};R_A(\mathfrak{q})_{q_iT})]_n \simeq 
K(\underline{\tilde{a}_i}; \mathfrak{q}^n A[\mathfrak{q}/q_i])  \text{ for all } n \geq 0 ,
\end{gather*}
where $\underline{\tilde{a}}_i = \tilde{a}_{1,i},\ldots,\tilde{a}_{d,i},$ denotes the sequence of 
strict transforms of $\underline{a} = a_1,\ldots,a_d$ on $A[\mathfrak{q}/q_i], i = 1,\ldots,s$.
\end{lemma}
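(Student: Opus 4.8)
The plan is to reduce the localized Koszul complex to a genuine Koszul complex over the blowing up ring by rescaling the generators $a_jT^{c_j}$ by suitable homogeneous units, and then to read off the graded pieces. Throughout I would use the identification $R_A(\mathfrak{q})_{q_iT} \simeq A[\mathfrak{q}/q_i][q_iT,(q_iT)^{-1}]$ recalled just before the Lemma, which realizes $R_A(\mathfrak{q})_{q_iT}$ as a $\mathbb{Z}$-graded ring in which $q_iT$ is a homogeneous unit of degree $1$ and $A[\mathfrak{q}/q_i]$ is the degree-zero part. First I would record the structure of the graded pieces. Since $\mathfrak{q}A[\mathfrak{q}/q_i] = q_iA[\mathfrak{q}/q_i]$ (each $q_j = (q_j/q_i)q_i$ lies in $q_iA[\mathfrak{q}/q_i]$), multiplication by $(q_iT)^n$ yields $[R_A(\mathfrak{q})_{q_iT}]_n \simeq \mathfrak{q}^nA[\mathfrak{q}/q_i]\,T^n$ for every $n$, so that the non-negative truncation $R_A(\mathfrak{q})_{q_iT,\geq 0}$ is exactly the Rees ring $R_{A[\mathfrak{q}/q_i]}(\mathfrak{q}A[\mathfrak{q}/q_i])$.

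Next I would carry out the change of basis. Writing $\tilde a_{j,i} = a_j/q_i^{c_j} \in A[\mathfrak{q}/q_i]$ for the strict transforms, one has the identity $a_jT^{c_j} = \tilde a_{j,i}\,(q_iT)^{c_j}$ in $R_A(\mathfrak{q})_{q_iT}$. In the Koszul complex $K(\underline{aT};R_A(\mathfrak{q})_{q_iT})$ the basis vector $e_j$ sits in degree $c_j$; I would replace it by $f_j = (q_iT)^{-c_j}e_j$, which is legitimate because $(q_iT)^{-c_j}$ is a homogeneous unit. A direct computation with the Koszul differential then shows that, in the new basis $f_{j_1}\wedge\cdots\wedge f_{j_p}$, the differential acts by the Koszul formula for the sequence $\underline{\tilde a_i} = \tilde a_{1,i},\ldots,\tilde a_{d,i}$, and that each $f_j$ now lies in degree $0$. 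Consequently there is a degree-preserving isomorphism of graded complexes over $R_A(\mathfrak{q})_{q_iT}$,
\[
K(\underline{aT};R_A(\mathfrak{q})_{q_iT}) \simeq K(\underline{\tilde a_i};R_A(\mathfrak{q})_{q_iT}),
\]
where the right-hand complex carries its standard Koszul grading with basis in degree $0$.

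Finally I would extract the two assertions by passing to graded components. Because the generators $\tilde a_{j,i}$ and the basis elements $f_J$ all lie in degree $0$, the differential of $K(\underline{\tilde a_i};R_A(\mathfrak{q})_{q_iT})$ preserves each graded strand, so its $n$-th piece is the Koszul complex of $\underline{\tilde a_i}$ with coefficients in $[R_A(\mathfrak{q})_{q_iT}]_n \simeq \mathfrak{q}^nA[\mathfrak{q}/q_i]$, which is the second isomorphism; and its non-negative truncation is the Koszul complex of $\underline{\tilde a_i}$ over $R_A(\mathfrak{q})_{q_iT,\geq 0} = R_{A[\mathfrak{q}/q_i]}(\mathfrak{q}A[\mathfrak{q}/q_i])$, which is the first. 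The step that needs the most care is the bookkeeping of the grading: one must check that rescaling the generators by the units $(q_iT)^{c_j}$ shifts every basis vector into degree $0$ without disturbing the $\mathbb{Z}$-grading of the ambient complex, so that the truncation $(\,\cdot\,)_{\geq 0}$ really becomes a subcomplex and matches the Rees ring of the blowing up. The fact that the strict transforms are homogeneous of degree zero — which is precisely the point of dividing $a_j$ by $q_i^{c_j}$ — is what makes all of this work.
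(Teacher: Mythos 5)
Your proposal is correct and follows essentially the same route as the paper: both arguments exploit that $q_iT$ is a homogeneous unit in $R_A(\mathfrak{q})_{q_iT}$ to rewrite $a_jT^{c_j} = \tilde a_{j,i}(q_iT)^{c_j}$, obtain an isomorphism $K(\underline{aT};R_A(\mathfrak{q})_{q_iT}) \simeq K(\underline{\tilde a_i};R_A(\mathfrak{q})_{q_iT})$ (the paper phrases the rescaling as a tensor product of rank-one complex isomorphisms with vertical maps $q_i^{c_j}T^{c_j}$, you phrase it as the basis change $e_j \mapsto (q_iT)^{-c_j}e_j$, which is the same thing), and then read off the graded strands via $[R_A(\mathfrak{q})_{q_iT}]_n \simeq \mathfrak{q}^n A[\mathfrak{q}/q_i]$. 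Your explicit verification that the degree-zero truncation recovers the Rees ring $R_{A[\mathfrak{q}/q_i]}(\mathfrak{q}A[\mathfrak{q}/q_i])$ is a point the paper leaves implicit, so no gap on either side.
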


\begin{proof} First of all we fix $j \in \{1,\ldots,s\}$ and put $q = q_j$.  
We consider the Koszul complex 
$K(\underline{aT};R_A(\mathfrak{q})_{qT})$. For the given $q = q_j$  it is - by the definition - the tensor product 
$$K(\underline{aT};R_A(\mathfrak{q})_{qT}) \cong \otimes_{i=1}^d K(a_iT^{c_i};R_A(\mathfrak{q})_{qT}).$$ For simplicity of 
notation put $a = a_i, c = c_i$. As a first step we claim that
\[ 
K(aT^c;R_A(\mathfrak{q})_{qT}) \simeq K(\tilde{a};R_A(\mathfrak{q})_{qT}),
\] 
where $\tilde{a} \in A[\mathfrak{q}/q]$ denotes the strict transform of $a\in A$.
To this end consider the following homomorphism of complexes
\[
\begin{array}{cccccccccl}
K(aT^c;R_A(\mathfrak{q})_{qT})& : & & 0 & \to& R_A(\mathfrak{q})_{qT}[-c] & \stackrel{aT^c}{\longrightarrow} & R_A(\mathfrak{q})_{qT} & \to & 0 \\
\downarrow                   &   & &   &    & \downarrow q^cT^c          &                                  &   \parallel           &     &   \\
K(\tilde{a};R_A(\mathfrak{q})_{qT})& : & & 0 & \to& R_A(\mathfrak{q})_{qT} & \stackrel{\tilde{a}}{\longrightarrow} & R_A(\mathfrak{q})_{qT} & \to & 0.
\end{array}
\]
Since $a \in \mathfrak{q}^c 
\setminus \mathfrak{q}^{c+1}$ the element $a$ may be written 
as a polynomial in $q_1,\ldots,q_s$ with initial degree $c$. Because of 
$q_j = q(q_j/q), j = 1,\ldots,s,$ in $A[\mathfrak{q}/q]$ 
we may substitute $q_j, j = 1,\ldots,s,$ in the element $a$. As a result of this process 
we get $aT^c = \tilde{a}q^cT^c$. The element $\tilde{a}\in A[\mathfrak{q}/q]$ is called the strict transform of $a \in A$ 
in $A[\mathfrak{q}/q]$, see also \cite[(13.13)]{HIO}. In fact, the above homomorphism of 
complexes is an isomorphism. By tensoring these isomorphisms we get the following isomorphism 
of Koszul complexes 
\[
K(\underline{aT};R_A(\mathfrak{q})_{qT}) \simeq K(\underline{\tilde{a}};R_A(\mathfrak{q})_{qT}),
\]
where $\underline{\tilde{a}} = \tilde{a}_1,\ldots \tilde{a}_d$ denotes the sequence 
of strict transforms of the elements $\underline{a} = a_1,\ldots,a_d$ in $A[\mathfrak{q}/q]$. 

As remarked above there is an isomorphism $R_A(\mathfrak{q})_{qT} \simeq A[\mathfrak{q}/q][qT,(qT)^{-1}].$  Now it is easy to see that the 
component of degree $n$ is given by $q^n A[\mathfrak{q}/q] = \mathfrak{q}^nA[\mathfrak{q}/q].$ 
Then the restriction to the $n$-th graded component 
of the Koszul complex $K(\underline{\tilde{a}};R_A(\mathfrak{q})_{qT})$ 
provides the claim. 
\end{proof} 

In order to give the Euler characteristic of $K(\underline{\tilde{a}_i}; \mathfrak{q}^n A[\mathfrak{q}/q_i]),$ in particular 
for $n = 0$,  the interpretation of a multiplicity we need a few explanations. 

\begin{remark} \label{7.0a} (A) Fix the previous notation. Then  $\dim A[\mathfrak{q}/q] \leq d$. 
Moreover it is not correct that $\dim A[\mathfrak{q}/q] = d$ for all $q = q_i, i = 1,\ldots,s$. To this end 
consider the Example \ref{2.2} (B). We have that $\dim A = 2$.  It follows  that $\dim A[\mathfrak{m}/x] = 1$,
 while $\dim A[\mathfrak{m}/y] = \dim A[\mathfrak{m}/z]  = 2$.  Suppose that $(A,\mathfrak{m})$ is 
 quasi-unmixed. Then 
it follows by the dimension formula (see \cite[Theorems 15.6 and 31.7]{Ma}) that $\dim A[\mathfrak{q}/q] = \dim A$ 
for all $\mathfrak{m}$-primary ideals $\mathfrak{q}$ and $q \in \mathfrak{q}$. \\
(B) Let $(A,\mathfrak{m})$ denote a quasi-unmixed local ring. Let $I \subset A$ denote an 
$\mathfrak{m}$-primary ideal such that $A[\mathfrak{q}/q]/(I)$ is of finite length. In order to define a multiplicity 
$e(I;A[\mathfrak{q}/q])$ - note that $A[\mathfrak{q}/q]$ is not a local ring - we proceed as follows: 
Because $A[\mathfrak{q}/q] $ is a Noetherian ring there are only 
finitely many maximal ideals $\mathfrak{m}_i, i = 1,\ldots,t,$ such that $IA[\mathfrak{q}/q] \subset \mathfrak{m}_i$. 
Because $A$ is quasi-unmixed it follows that $\dim A[\mathfrak{q}/q_i] = \dim A$ for all $i = 1,\ldots,t$.
By the Chinese Reminder Theorem it turns out that 
\[
A[\mathfrak{q}/q]/I^n \simeq \oplus_{i=1}^t A[\mathfrak{q}/q]_{\mathfrak{m}_i}/I^nA[\mathfrak{q}/q]_{\mathfrak{m}_i}
\]
for all $n \geq 0$. Therefore we define $e(I;A[\mathfrak{q}/q]) := \sum_{i=1}^t e(I;A[\mathfrak{q}/q]_{\mathfrak{m}_i})$.
In the following we will always use this notion for the multiplicity of the blowing up rings $A[\mathfrak{q}/q] $.  \\
(C)  In order to describe 
the multiplicity $e(\underline{\tilde{a}};A[\mathfrak{q}/q])$
in terms of Koszul homology (see \ref{3.5}) we need the assumption that $\underline{\tilde{a}} = \tilde{a}_1, \ldots, 
\tilde{a}_d$ is a system of parameters of $A[\mathfrak{q}/q]_{\mathfrak{m}_i}$ for all maximal ideals 
$\mathfrak{m}_i, i = 1,\ldots,t,$ containing $\underline{\tilde{a}} = \tilde{a}_1, \ldots, \tilde{a}_d$.
In general this is not the case. 
In the Example \ref{2.2} (B) we have  $\dim A[\mathfrak{m}/z] = 1$ while for the system of parameters 
$\{f,g\}$ the strict transforms consist of two elements. That is, we do need 
some additional assumption for our purposes here. 
\end{remark} 

\begin{lemma} \label{7.0b} Let $(A,\mathfrak{m})$ denote a quasi-unmixed local ring. 
With the previous notation let $\underline{a}^{\star} = a_1^{\star}, \ldots,a_d^{\star}$ denote 
the sequence of initial forms of $\underline{a} = a_1,\ldots,a_d$. Assume that $\dim G_A(\mathfrak{q})/(\underline{a}^{\star}) = 1$. The following are true:
\begin{itemize}
\item[(a)]
The factor ring $ A[\mathfrak{q}/q_i]/( \underline{\tilde{a}_i}),  i = 1,\ldots,s,$ is of finite length.
\item[(b)] There is at least one $i \in \{1,\ldots,s\}$ such that $( \underline{\tilde{a}_i})$ is a proper ideal. 
\item[(c)] If $( \underline{\tilde{a}_i}) \subset  A[\mathfrak{q}/q_i]$ is a proper ideal, it is a parameter ideal 
in $ A[\mathfrak{q}/q_i]_{\mathfrak{M}}$ for each maximal ideal $\mathfrak{M} \supseteq  ( \underline{\tilde{a}_i}) $.
\end{itemize}
\end{lemma}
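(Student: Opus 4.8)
The plan is to transport all three assertions to the exceptional fibre, where the hypothesis $\dim G_A(\mathfrak{q})/(\underline{a}^{\star})=1$ lives, using the dictionary between the blowing-up charts and the form ring provided by Lemma \ref{7.0}. Write $B_i = A[\mathfrak{q}/q_i]$ and $J_i = (\underline{\tilde{a}_i})B_i$. Two facts will be used throughout. First, inverting $q_i$ collapses the chart onto $A$: since $q_j/q_i \in A[1/q_i]$ we have $B_i[1/q_i] = A[1/q_i]$, and since $a_k = q_i^{c_k}\tilde{a}_{k,i}$ in $B_i$ (this is precisely the strict-transform relation $a_kT^{c_k}=\tilde{a}_{k,i}q_i^{c_k}T^{c_k}$ of Lemma \ref{7.0}), the extension of $J_i$ to $A[1/q_i]$ equals $(\underline{a})A[1/q_i]$. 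Second, $B_i/q_iB_i \simeq [G_A(\mathfrak{q})_{q_i^{\star}}]_0$, and under this isomorphism the residue class of $\tilde{a}_{k,i}$ is identified with the dehomogenisation $a_k^{\star}/(q_i^{\star})^{c_k}$. Hence $\Spec B_i/(J_i + q_iB_i)$ is exactly the chart $D_+(q_i^{\star})$ of $Y := \Proj\bigl(G_A(\mathfrak{q})/(\underline{a}^{\star})\bigr)$.

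For (a) I would first record that $V(J_i)\subseteq V(q_i)$. Because $\underline{a}$ is a system of parameters, $(\underline{a})$ is $\mathfrak{m}$-primary, so a power of $q_i$ lies in $(\underline{a})$; thus $J_iA[1/q_i] = (\underline{a})A[1/q_i] = A[1/q_i]$, i.e. no prime of $B_i$ over $J_i$ avoids $q_i$. Consequently $V(J_i) = V(J_i + q_iB_i)$, which by the dictionary is the $q_i$-chart of $Y$. Since $\dim G_A(\mathfrak{q})/(\underline{a}^{\star}) = 1$, the graded ring $G_A(\mathfrak{q})/(\underline{a}^{\star})$ has dimension one, so $Y$ is a nonempty projective scheme of dimension $0$, and each of its charts is a finite set of closed points. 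Therefore $\dim B_i/J_i = 0$, and $B_i/J_i$, being Noetherian of dimension zero, is Artinian, hence of finite length. Assertion (b) is the nonemptiness half of the same picture: as $G_A(\mathfrak{q})$ is generated in degree one over $A/\mathfrak{q}$, the sets $D_+(q_i^{\star})$, $i=1,\dots,s$, cover $\Proj G_A(\mathfrak{q})$ and in particular cover $Y$; since $Y\neq\emptyset$, some chart $D_+(q_i^{\star})\cap Y$ is nonempty, giving $V(J_i)\neq\emptyset$, i.e. $J_i\neq B_i$, for that index $i$.

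For (c), fix $i$ with $J_i$ proper and a maximal ideal $\mathfrak{M}\supseteq J_i$. By (a) the ring $B_i/J_i$ is Artinian, so $\mathfrak{M}$ is minimal over $J_i$ and $J_iB_{i,\mathfrak{M}}$ is $\mathfrak{M}B_{i,\mathfrak{M}}$-primary, generated by the $d$ elements $\tilde{a}_{1,i},\dots,\tilde{a}_{d,i}$. It remains to prove $\dim B_{i,\mathfrak{M}} = d$. From $V(J_i)\subseteq V(q_i)$ we get $q_i\in\mathfrak{M}$, hence $\mathfrak{q}B_i = q_iB_i\subseteq\mathfrak{M}$ and $\mathfrak{M}\cap A=\mathfrak{m}$, so $\mathfrak{M}$ is a closed point of the fibre over $\mathfrak{m}$. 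Since $A$ is quasi-unmixed it is universally catenary and equidimensional, so by the dimension formula (as already invoked in Remark \ref{7.0a} (A)) the chart $B_i$ is equidimensional of dimension $d$ and catenary; consequently every maximal ideal lying over $\mathfrak{m}$ has height $d$, giving $\dim B_{i,\mathfrak{M}} = d$. A set of $d$ elements generating an $\mathfrak{M}$-primary ideal in a $d$-dimensional local ring is a system of parameters, so $J_iB_{i,\mathfrak{M}}$ is a parameter ideal.

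The step I expect to be the main obstacle is the dimension count in (c): that quasi-unmixedness of $A$ forces $\dim B_{i,\mathfrak{M}} = d$ at \emph{every} maximal ideal of the chart lying over $\mathfrak{m}$, and not merely $\dim B_i = d$. This is exactly the place where the hypothesis on $A$ is indispensable, since without equidimensionality a maximal ideal of the chart could have strictly smaller height and the $d$ strict transforms would then overdetermine it. Everything else is bookkeeping built on the identification $\Spec B_i/(J_i + q_iB_i)\cong D_+(q_i^{\star})\cap Y$ and the zero-dimensionality of $Y$.
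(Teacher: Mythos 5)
Your proof is correct, and for parts (a) and (b) it takes a genuinely different route from the paper's. The paper argues upstairs in the Rees algebra: from $\dim G_A(\mathfrak{q})/(\underline{a}^{\star})=1$ it chooses a homogeneous parameter $qT^c$ of that ring, applies Nakayama to obtain $\mathfrak{q}^n=\sum_{i=1}^d a_i\mathfrak{q}^{n-c_i}+q\mathfrak{q}^{n-c}$ for all $n\gg 0$, concludes that $\Proj R_A(\mathfrak{q})/(\underline{aT})$ is zero-dimensional, and identifies the rings $A[\mathfrak{q}/q_i]/(\underline{\tilde{a}_i})$ with its affine charts; part (b) is then a terse contradiction argument from $\Proj R_A(\mathfrak{q})/(\underline{aT})=\emptyset$. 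You instead transport everything to the special fibre: since $q_i^M\in(\underline{a})A[\mathfrak{q}/q_i]\subseteq(\underline{\tilde{a}_i})$ for $M\gg 0$, you get $V((\underline{\tilde{a}_i}))=V((\underline{\tilde{a}_i})+q_iA[\mathfrak{q}/q_i])$, and the latter is the chart $D_+(q_i^{\star})$ of $Y=\Proj G_A(\mathfrak{q})/(\underline{a}^{\star})$, where the hypothesis $\dim G_A(\mathfrak{q})/(\underline{a}^{\star})=1$ directly yields zero-dimensionality (hence (a)) and nonemptiness together with the covering by the $D_+(q_i^{\star})$ (hence (b)), with no auxiliary parameter and no Nakayama step. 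Your route is cleaner: it also sidesteps the paper's loosely phrased intermediate claims (``$R_A(\mathfrak{q})/(\underline{aT},qT^c)$ is of finite length'' and ``$\dim R_A(\mathfrak{q})/(\underline{aT})\leq 1$''), which taken literally fail --- both rings have degree-zero component $A$, so the second one has dimension at least $d$ --- and must be read as statements about the Proj, i.e. modulo saturation by the irrelevant ideal. What the paper's route buys is scheme-theoretic control of $\Proj R_A(\mathfrak{q})/(\underline{aT})$ itself, the object that feeds into the \v{C}ech--Koszul computation of Theorem \ref{7.1}, whereas your identification of $V((\underline{\tilde{a}_i}))$ with a fibre chart holds only up to radical; that suffices for (a)--(c), since dimension, properness and being a parameter ideal are radical-invariant. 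Part (c) is the same in both proofs: quasi-unmixedness and the dimension formula of Remark \ref{7.0a} give $\dim A[\mathfrak{q}/q_i]_{\mathfrak{M}}=d$, and (a) plus the $d$ generators finish. Two statements you assert without justification are standard and at the paper's own level of detail: $Y\neq\emptyset$ (if $\Proj$ were empty, every minimal prime of $G_A(\mathfrak{q})/(\underline{a}^{\star})$ would contain the irrelevant ideal, forcing dimension $0$, not $1$), and finite length over $A$ of the Artinian quotients (use $\mathfrak{m}^N A[\mathfrak{q}/q_i]\subseteq(\underline{\tilde{a}_i})$ and Zariski's lemma for the residue fields).
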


\begin{proof} By the assumption there is an element $q \in \mathfrak{q}^c \setminus \mathfrak{q}^{c+1}$ which is a 
parameter of the ring $G_A(\mathfrak{q})/(\underline{a}^{\star})$. Therefore $\dim G_A(\mathfrak{q})/(\underline{a}^{\star},qT^c) = 0$.  This implies $[ G_A(\mathfrak{q})/(\underline{a}^{\star},qT^c)]_n = 0$ for all $n \gg 0$ and therefore the equalities 
\[
\mathfrak{q}^n = \sum_{i=1}^d a_i \mathfrak{q}^{n-c_i} +q \mathfrak{q}^{n-c} + \mathfrak{q}^{n+1}
\]
for all $n \gg 0$. By Nakayama Lemma it follows that $\mathfrak{q}^n = \sum_{i=1}^d a_i \mathfrak{q}^{n-c_i} +q \mathfrak{q}^{n-c}$ 
for all $n \gg 0$. That means that $R_A(\mathfrak{q})/(\underline{aT},q T^c)$ is of finite length and therefore 
$\dim R_A(\mathfrak{q})/(\underline{aT}) \leq 1$. But now $\Spec A[\mathfrak{q}/q_i]/( \underline{\tilde{a}_i}), 
i = 1,\ldots,s,$ is the affine covering of $\Proj R_A(\mathfrak{q})/(\underline{aT}))$ and therefore of dimension 
zero for all $i = 1,\ldots,s$.  This proves the claim in (a). 

Assume  that  $(\underline{\tilde{a}_i})$ is the unit ideal for all $i$. Then $\Proj R_A(\mathfrak{q})/(\underline{aT}) = \emptyset$ 
and $\dim R_A(\mathfrak{q}) \leq d$ which is a contradiction because $\dim A > 0$. This proves (b). 

We have that $\dim A[\mathfrak{q}/q_i]_{\mathfrak{M}} = d, i = 1,\ldots,s,$ for each 
maximal ideal $\mathfrak{M} \supseteq (\underline{\tilde{a}_i})$ (see \ref{7.0a}). Because $(\underline{\tilde{a}_i})$ is generated 
by $d$ elements the statement (a) proves the claim in (c). 
\end{proof} 

\begin{remark} \label{7.0c} Let $(A,\mathfrak{m})$ denote a quasi-unmixed local ring.  By arguments similar to those of Remark \ref{7.0a} it follows that  $\dim A[\mathfrak{q}/q_{j_1},\ldots, \mathfrak{q}/q_{j_i}] = d$ for all $1 \leq j_1 < \ldots < j_i \leq s$.    
By the definition of the $\Proj$ it holds that $(\underline{\tilde{a}_i}) A[\mathfrak{q}/q_i,\mathfrak{q}/q_j] = (\underline{\tilde{a}_j}) A[\mathfrak{q}/q_i,\mathfrak{q}/q_j]$ 
for all $i \not= j$. Now assume that in addition $\dim G_A(\mathfrak{q})/(\underline{a^{\star}}) = 1$.  As a consequence 
of \ref{7.0b} it follows that if $(\underline{\tilde{a}_k})A[\mathfrak{q}/q_{j_1},\ldots, \mathfrak{q}/q_{j_i}] $ 
is a proper ideal, then it is a parameter ideal in $A[\mathfrak{q}/q_{j_1},\ldots, \mathfrak{q}/q_{j_i}]_{\mathfrak{M}}$ for all $1 \leq j_1 < \ldots < j_i \leq s$ and $k \in \{j_1,\ldots,j_i\}$, where $\mathfrak{M}$ denotes a maximal ideal containing 
$(\underline{\tilde{a}_k})$. 
\end{remark}

The following remark will be the basic consideration for the computation of multiplicities in 
the next section. 

\begin{remark} \label{7.0d} We fix the previous notation. We will assume in 
addition that $(A,\mathfrak{m})$ is a quasi-unmixed local ring with $\dim A = d$ and $\dim G_A(\mathfrak{q})/(\underline{a^{\star}}) = 1$. 
Then (see \ref{7.0c}) $(\underline{\tilde{a}_k})$ is either 
the unit ideal or a parameter ideal in $A[\mathfrak{q}/q_{j_1},\ldots, \mathfrak{q}/q_{j_i}] _{\mathfrak{M}}$ 
for all $1 \leq j_1 < \ldots < j_i \leq s$ and $k \in \{j_1,\ldots,j_i\}$. So  $e(\underline{\tilde{a}_k}; A[\mathfrak{q}/q_{j_1},\ldots, \mathfrak{q}/q_{j_i}] )$ is equal to the 
Euler characteristic $\chi(\underline{\tilde{a}_k};A[\mathfrak{q}/q_{j_1},\ldots, \mathfrak{q}/q_{j_i}] )$. That is 
\[
e(\underline{\tilde{a}_k}; A[\mathfrak{q}/q_{j_1},\ldots, \mathfrak{q}/q_{j_i}] ) = 
\sum_{l=1}^d (-1)^l\ell (H_l({\underline{\tilde{a}_k}}; A[\mathfrak{q}/q_{j_1},\ldots, \mathfrak{q}/q_{j_i}] ))
\]
for  all $1 \leq j_1 < \ldots < j_i \leq s$ and $k \in \{j_1,\ldots,j_i\}$. This is easily seen by virtue 
of \ref{7.0a}, \ref{7.0b} and \ref{7.0c}.
\end{remark} 

\section{The use of local cohomology}

We want to study the \v{C}ech complex 
$C^{\cdot}$ of $R_A(\mathfrak{q})$ with respect 
to $\mathfrak{q}T = q_1T,\ldots,q_sT.$ That is, 
\[
(C^{\cdot},d^{\cdot}) : 0 \to R_A(\mathfrak{q}) \stackrel{d^0}{\to} \oplus_{i=1}^s R_A(\mathfrak{q})_{q_iT} 
\stackrel{d^1}{\to} \ldots \stackrel{d^{s-1}}{\to} R_A(\mathfrak{q})_{q_1T\cdots q_sT} \to 0
\]
(see \cite{SH} and \cite{Sc} for the details). It is a complex of graded $R_A(\mathfrak{q})$-modules. The $i$-th cohomology is the local cohomology module 
$H^i_{\mathfrak{q}T}(R_A(\mathfrak{q})),$ $ i \in \mathbb{Z}.$ It is a graded $R_A(\mathfrak{q})$-module such that the $n$-th graded component $[H^i_{\mathfrak{q}T}(R_A(\mathfrak{q}))]_n$ is a finitely generated $A$-module and  vanishes for all $n \gg 0.$  

\begin{lemma} \label{8.0} Let $(A,\mathfrak{m})$ denote a local ring. Let $\mathfrak{q} = (q_1,\ldots,q_s)A$ denote an 
$\mathfrak{m}$-primary ideal. Let $\underline{a} = a_1,\ldots,a_d$ denote a system of 
parameters with $(\underline{a}) \subset \mathfrak{q}$. With the previous notation it follows 
\[
[R_A(\mathfrak{q})_{q_{j_1}T \cdots q_{j_i}T}]_n = \mathfrak{q}^n  A[\mathfrak{q}/q_{j_1},\ldots, \mathfrak{q}/q_{j_i}] 
\simeq A[\mathfrak{q}/q_{j_1},\ldots, \mathfrak{q}/q_{j_i}] 
\] 
for all $n \geq 0$ and all $1 \leq j_1 < \ldots j_i \leq s$.
\end{lemma}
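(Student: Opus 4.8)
The sequence $\underline{a}$ plays no role in this statement, so the claim is purely about the graded structure of the localized Rees ring. The plan is to generalize the single-element isomorphism $R_A(\mathfrak{q})_{q_iT} \simeq A[\mathfrak{q}/q_i][q_iT,(q_iT)^{-1}]$ recalled just before Lemma \ref{7.0} (following \cite[Proposition 5.5.8]{SH}) to the localization at the product $q_{j_1}T\cdots q_{j_i}T$. First I would observe that localizing at a product inverts each factor, so in $R_A(\mathfrak{q})_{q_{j_1}T\cdots q_{j_i}T}$ all of $q_{j_1}T,\ldots,q_{j_i}T$ become units. Throughout write $B = A[\mathfrak{q}/q_{j_1},\ldots,\mathfrak{q}/q_{j_i}]$ and $u = q_{j_1}T$, a homogeneous unit of degree one.

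Next I would describe the homogeneous pieces explicitly. A homogeneous element of degree $n$ is represented by $xT^m/(q_{j_1}T)^{k_1}\cdots(q_{j_i}T)^{k_i}$ with $x \in \mathfrak{q}^m$ and $m = n + K$, where $K = k_1+\cdots+k_i$; after cancelling $T$ its coefficient is $x/(q_{j_1}^{k_1}\cdots q_{j_i}^{k_i})$ with $x \in \mathfrak{q}^{n+K}$. The key combinatorial observation is that a degree-$(n+K)$ monomial $q_{l_1}\cdots q_{l_{n+K}}$ in $q_1,\ldots,q_s$ divided by the degree-$K$ monomial $q_{j_1}^{k_1}\cdots q_{j_i}^{k_i}$ can be rewritten, by pairing $K$ of the numerator factors with the denominator factors, as a product of $n$ generators of $\mathfrak{q}$ times a product of $K$ fractions $q_l/q_{j_k}$; hence it lies in $\mathfrak{q}^n B$. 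The reverse inclusion is immediate, since any element of $\mathfrak{q}^n B$ has, after clearing denominators, exactly this shape. Thus the degree-$n$ component equals $\mathfrak{q}^n B\cdot T^n$, and in particular (taking $n=0$) the degree-zero component is exactly $B$, which also confirms that the notation $A[\mathfrak{q}/q_{j_1},\ldots,\mathfrak{q}/q_{j_i}]$ is the right object. This identification of the graded pieces with $\mathfrak{q}^n B$ is the technical heart of the argument and the one point needing genuine care.

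Finally I would harvest the two assertions of the lemma. Because every generator satisfies $q_l = q_{j_1}(q_l/q_{j_1})$ with $q_l/q_{j_1}\in B$, one has $\mathfrak{q}B = q_{j_1}B$ and therefore $\mathfrak{q}^n B = q_{j_1}^n B = B\,u^n$. Since $u$ is a unit in the localization, multiplication by $u^n$ is a $B$-module isomorphism from $B$ onto $B\,u^n = \mathfrak{q}^n B$, which yields the asserted isomorphism $[R_A(\mathfrak{q})_{q_{j_1}T\cdots q_{j_i}T}]_n \simeq B$. Assembling the graded pieces shows in passing that the whole localization is the graded Laurent ring $B[u,u^{-1}]$, the expected generalization of the isomorphism underlying Lemma \ref{7.0}; everything in this last step is formal once the second paragraph is in place.
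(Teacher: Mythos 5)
Your proposal is correct and takes essentially the same route as the paper: both identify the localized Rees ring as a Laurent-type extension of $B = A[\mathfrak{q}/q_{j_1},\ldots,\mathfrak{q}/q_{j_i}]$ with $q_{j_1}T$ an invertible element of degree one, read off the degree-$n$ component as $\mathfrak{q}^n B$, and deduce the isomorphism with $B$ from $\mathfrak{q}^n B = q_{j_1}^n B$. The only cosmetic differences are that the paper obtains the structural isomorphism by citing \cite[Proposition 5.5.8]{SH} and ``iteration of the localization'' where you verify it by a direct computation with homogeneous fractions, and that the paper phrases the last step via regularity of $q_{j_1}$ on $B$ rather than invertibility of $q_{j_1}T$ in the localization --- equivalent formulations of the same fact.
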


\begin{proof} An affine covering of $\Proj R_A(\mathfrak{q})$ 
is given by $\Spec A[\mathfrak{q}/q_i], i = 1, \ldots,s$. The affine rings 
$A[\mathfrak{q}/q_i]$ are obtained as the degree zero components of the 
localizations $R_A(\mathfrak{q})_{q_iT}, i = 1,\ldots,s$. It follows that 
$R_A(\mathfrak{q})_{q_iT} \simeq A[\mathfrak{q}/q_i][q_iT,(q_iT)^{-1}]$  
(see the beginning of the previous Section). By an iteration of the localization this 
provides that 
\[
R_A(\mathfrak{q})_{q_{j_1}T \cdots q_{j_i}T} = A[\mathfrak{q}/q_{j_1},\ldots, \mathfrak{q}/q_{j_i}] 
[q_{j_1}T,q_{j_1}^{-1}T^{-1}, \ldots,q_{j_i}T,q_{j_i}^{-1}T^{-1}]
\]
for all $1 \leq j_1 < \ldots j_i \leq s$. By considering the equality in degree $n \geq 0$ this proves the 
equality at the first. The second isomorphism follows since 
\[
\mathfrak{q}^n  A[\mathfrak{q}/q_{j_1},\ldots, \mathfrak{q}/q_{j_i}] = q_{j_1}^n  A[\mathfrak{q}/q_{j_1},\ldots, \mathfrak{q}/q_{j_i}] 
\]
and $q_{j_1}$ is regular on $A[\mathfrak{q}/q_{j_1},\ldots, \mathfrak{q}/q_{j_i}]$. 
\end{proof} 

As a consequence of Lemma \ref{8.0} we get  the degree $n$-component $n \geq 0$ of 
the  \v{C}ech complex. Note that it is defined by localizations.  

In  accordance with Lemma \ref{7.0} we will be able to 
examine the Koszul complex of the \v{C}ech complex and its Euler characteristics.

\begin{theorem} \label{7.1} Let $(A,\mathfrak{m})$ denote an unmixed local ring.  With the previous notation 
suppose that $\dim G_A(\mathfrak{q})/(\underline{a^{\star}}) =  1$. Then there is the equality 
\[
\chi(\underline{a},\mathfrak{q};n) = \sum_{i=1}^s (-1)^{i-1} \sum_{1 \leq j_1 < \ldots <j_i \leq s} 
e(\underline{\tilde{a}}_{j_1}; A[\mathfrak{q}/q_{j_1}, \ldots, \mathfrak{q}/q_{j_i}]),
\]
for all $n \gg 0$.
\end{theorem}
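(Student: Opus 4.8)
The plan is to compute the Euler characteristic of the $n$-th graded piece of a double complex in two different ways. I would form the double complex $D^{\,\cdot,\cdot} = K(\underline{aT};R_A(\mathfrak{q})) \otimes_{R_A(\mathfrak{q})} C^{\cdot}$, where $K(\underline{aT};R_A(\mathfrak{q}))$ is regarded cohomologically (placing $K_q$ in cohomological degree $-q$) and $C^{\cdot}$ is the \v{C}ech complex of the previous section, and restrict everything to its $n$-th graded component $D^{\,\cdot,\cdot}_n$. Since each Koszul term $K_q$ is a finite free $R_A(\mathfrak{q})$-module, both spectral sequences of $D^{\,\cdot,\cdot}_n$ are computable, and the whole argument reduces to comparing the two resulting values of the Euler characteristic of $\operatorname{Tot}(D^{\,\cdot,\cdot}_n)$.

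First I would show that $\operatorname{Tot}(D^{\,\cdot,\cdot}_n)$ is exact for all $n \gg 0$. Taking \v{C}ech cohomology first, the freeness of $K_q$ gives the cohomology of the $q$-th row as $K_q \otimes_{R_A(\mathfrak{q})} H^p_{\mathfrak{q}T}(R_A(\mathfrak{q}))$. As $K_q$ is a finite direct sum of shifts $R_A(\mathfrak{q})(-c_{j_1}-\cdots-c_{j_q})$ and each graded piece $[H^p_{\mathfrak{q}T}(R_A(\mathfrak{q}))]_m$ vanishes for $m \gg 0$, the degree-$n$ component of $K_q \otimes H^p_{\mathfrak{q}T}(R_A(\mathfrak{q}))$ is zero once $n$ exceeds the top nonvanishing degree of the finitely many local cohomology modules plus $\sum_i c_i$. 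Hence the first spectral sequence already vanishes on its $E_1$-page, $\operatorname{Tot}(D^{\,\cdot,\cdot}_n)$ is exact, and in particular its total Euler characteristic is $0$ for all $n \gg 0$.

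Next I would run the second spectral sequence, computing Koszul homology within each \v{C}ech column. The column $C^0 = R_A(\mathfrak{q})$ contributes $[H_q(\underline{aT};R_A(\mathfrak{q}))]_n$, of finite length by Lemma \ref{3.2}, with alternating length-sum equal to $\chi(\underline{a},\mathfrak{q};n)$ by Definition \ref{3.3}. For $i \geq 1$, the column $\bigoplus_{j_1<\cdots<j_i} R_A(\mathfrak{q})_{q_{j_1}T\cdots q_{j_i}T}$ has $n$-th graded piece identified, via Lemma \ref{7.0} and Lemma \ref{8.0}, with $\bigoplus_{j_1<\cdots<j_i} K(\underline{\tilde{a}}_{j_1};A[\mathfrak{q}/q_{j_1},\ldots,\mathfrak{q}/q_{j_i}])$. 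Under the hypothesis $\dim G_A(\mathfrak{q})/(\underline{a^{\star}}) = 1$, Lemma \ref{7.0b} together with Remarks \ref{7.0c} and \ref{7.0d} shows each ideal $(\underline{\tilde{a}}_{j_1})$ is either the unit ideal or a parameter ideal locally, so these Koszul homologies have finite length and Euler characteristic $e(\underline{\tilde{a}}_{j_1};A[\mathfrak{q}/q_{j_1},\ldots,\mathfrak{q}/q_{j_i}])$. Thus every $E_1$-term of the second spectral sequence has finite length, the Euler characteristic is preserved from page to page, and the $E_\infty$-page computes the (vanishing) total homology.

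Equating the two computations of the total Euler characteristic then yields
\[
0 = \chi(\underline{a},\mathfrak{q};n) + \sum_{i=1}^s (-1)^i \sum_{1\le j_1<\cdots<j_i\le s} e(\underline{\tilde{a}}_{j_1};A[\mathfrak{q}/q_{j_1},\ldots,\mathfrak{q}/q_{j_i}])
\]
for all $n \gg 0$, the sign $(-1)^i$ recording the \v{C}ech degree, and rearranging gives the asserted formula. The main obstacle is the content of the third paragraph: one must know that the Koszul complexes of the strict transforms on the iterated blow-up charts have finite-length homology and that their Euler characteristic is the stated multiplicity. This is exactly where the standing hypotheses, quasi-unmixedness and $\dim G_A(\mathfrak{q})/(\underline{a^{\star}}) = 1$, are indispensable; without the dimension-one assumption the strict transforms need not cut out finite-length quotients and the right-hand multiplicities would not even be defined. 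The spectral-sequence bookkeeping is then formal, the only care being the grading-shift estimate behind the vanishing in the second paragraph and the sign conventions relating the \v{C}ech degree to the factor $(-1)^{i-1}$.
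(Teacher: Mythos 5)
Your proposal is correct and takes essentially the same route as the paper: both compute the Euler characteristic of the degree-$n$ part of $K(\underline{aT};R_A(\mathfrak{q})) \otimes C^{\cdot}$ in two ways, using the graded vanishing of $H^i_{\mathfrak{q}T}(R_A(\mathfrak{q}))$ for the vanishing of the total Euler characteristic, and Lemmas \ref{7.0}, \ref{7.0b}, \ref{8.0} together with Remarks \ref{7.0c} and \ref{7.0d} to identify the column contributions with the multiplicities of the strict transforms. The only difference is presentational: where you invoke the two spectral sequences of the double complex, the paper carries out the same comparison by hand, splitting the \v{C}ech complex into short exact sequences of kernels, images and cohomologies, tensoring with the (free) Koszul complex, and applying additivity of Euler characteristics inductively.
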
 

\begin{proof} 
We use the \v{C}ech complex $C^{\cdot}$ of the beginning of this section. Note that 
\[
C^i \simeq \oplus_{1\leq j_1 < \ldots j_i \leq s} R_A(\mathfrak{q})_{q_{j_1} T\cdots q_{j_i}T}.
\]
Moreover the restriction of the complex $C^{\cdot}$ to the degree $n$ is exact in all degrees $n \gg 0$ 
since its cohomology modules vanish for all $n \gg 0.$ Let $d^i : C^i \to C^{i+1}$ denote the boundary 
map. From the complex we derive the following 
short exact sequences 
\[
0 \to \ker d^i \to C^i \to \Img d^i \to 0 \text{ and } 0 \to  \Img d^{i-1} \to \ker d^i \to H^i \to 0
\] 
for all $i \in \mathbb{Z}$. Here we use the abbreviation $H^i = H^i_{\mathfrak{q}T}(R_A(\mathfrak{q})).$
Next we apply the Koszul complex $K$ to the previous two short exact sequences. By Lemma \ref{7.0}, 
Lemma \ref{7.0b} and Remark \ref{7.0c} 
the $n$-th graded component of the  Koszul homology $H_l(\underline{aT},C^i)_n$ is of finite length 
for $l = 0,\ldots,d, i = 1,\ldots,s,$ and all $n \geq 0$. Moreover, the $n$-th graded component of the 
Koszul homology $H_l(\underline{aT},H^i)_n$ vanishes for all $n \gg 0.$  By view of the 
two short exact sequences induction on $i$ provides  that $H_l(\underline{aT}; \ker d^i)_n $ as well 
as $H_l(\underline{aT}; \Img d^i)_n$ are $A$-modules of finite length for all $l=0,\ldots,d,$ and all 
$n \gg 0$. 

The Koszul complex $K := K(\underline{aT};R_A(\mathfrak{q}))$ is a complex of free $R_A(\mathfrak{q})$-modules. 
Therefore it induces two short exact sequences of complexes 
\begin{gather*}
0 \to \ker d^i \otimes K \to C^i \otimes K \to \Img d^i \otimes K \to 0 \text{ and }\\
 0 \to  \Img d^{i-1} \otimes K \to \ker d^i \otimes K\to H^i \otimes K \to 0.
\end{gather*}
By the previous investigations we are able to evaluate the Euler characteristics of each of the complexes. 
By the additivity of Euler characteristics the first exact sequence yields that 
\[
\chi([C^i \otimes K]_n)=  \chi([\ker d^i \otimes K]_n) + \chi([\Img d^i \otimes K]_n)
\]
for all $n, i \in \mathbb{N}.$  By the same argument the second of these short exact sequence provides that 
\[
\chi([\Img d^{i-1} \otimes K]_n) = \chi([\ker d^i \otimes 
K]_n) \text{ for all } n \gg 0.
\]
To this end note that $\chi([H^i \otimes K]_n) = 0$ for all $n \gg 0.$ This follows  because of $H^i_n = 0$ 
for all $n \gg 0.$ Therefore $\chi([C^i \otimes K]_n)= \chi([\Img d^{i-1} \otimes K]_n) + \chi([\Img d^i \otimes K]_n)$ 
for all $n \gg 0$.

By applying the Koszul complex $K$ to the \v{C}ech complex $C^{\cdot}$  it provides the  
complex $C^{\cdot} \otimes K$. 
That is the single complex associated to the double complex
\[
0 \to C^0 \otimes K \to C^1 \otimes K \to \ldots \to C^s \otimes K \to 0.
\]
Now we claim that $\sum_{i=0}^s (-1)^i\chi([K\otimes C^i]_n) = 0$ for all $n \gg 0.$ This 
follows easily by summing up  the previous formulas for the Euler characteristics. 
In other words, by our definitions we get that 
$\chi(\underline{a},\mathfrak{q};n) = \sum_{i=1}^s (-1)^{i-1} \chi([K(\underline{aT};C^i)]_n)$ for all 
$n \gg 0$.  By virtue of Lemma \ref{7.0} and Remark \ref{7.0d} it follows that 
\[
\chi([K(\underline{aT};C^i)]_n) =  \sum_{1 \leq j_1 < \ldots <j_i \leq s}  
\chi(\underline{\tilde{a}}_{j_1,\ldots,j_i}; \mathfrak{q}^n A[\mathfrak{q}/q_{j_1},\ldots,\mathfrak{q}/q_{j_i}]),
\]
where $\underline{\tilde{a}}_{j_1,\ldots,j_i}$ denotes the strict transform of $\underline{aT}$ on 
$A[\mathfrak{q}/q_{j_1},\ldots,\mathfrak{q}/q_{j_i}]$. Next recall that 
This implies for the Euler characteristic of the Koszul homology 
\[
\chi(\underline{\tilde{a}}_{j_1,\ldots,j_i}; \mathfrak{q}^n A[\mathfrak{q}/q_{j_1},\ldots,\mathfrak{q}/q_{j_i}]) 
= e(\underline{\tilde{a}}_{j_1,\ldots,j_i}; A[\mathfrak{q}/q_{j_1}, \ldots, \mathfrak{q}/q_{j_i}])
\]
as follows by virtue of Auslander and Buchsbaum resp. by Serre (see \cite{AB} resp. \cite{S}). In order to simplify 
the formula recall that 
$(\underline{\tilde{a}_i}) A[\mathfrak{q}/q_i,\mathfrak{q}/q_j] = (\underline{\tilde{a}_j}) A[\mathfrak{q}/q_i,\mathfrak{q}/q_j]$ 
for all $i \not= j$. Therefore $(\underline{\tilde{a}}_{j_1,\ldots,j_i}) A[\mathfrak{q}/q_{j_1}, \ldots, \mathfrak{q}/q_{j_i}] = 
(\underline{\tilde{a}_k}) A[\mathfrak{q}/q_{j_1}, \ldots, \mathfrak{q}/q_{j_i}] $ for all $k \in \{j_1,\ldots,j_i\}.$ 
With this in mind and summing up all the direct summands of the \v{C}ech complex the additivity 
of the Euler characteristic provides the claim. 
\end{proof}

It should be mentioned that several of the multiplicities in the 
sum of Lemma \ref{7.1} might be zero. This happens for instance, if the strict transform in the corresponding 
ideal is the unit ideal. 

In the following we will apply the previous result to the local Bezout Theorem as studied in the 
previous section. To this end let  $C = V(f), D = V(g)$ two plane curves without any common 
component. Let$f, g\in k[x,y]$ denote the defining equations. Suppose that $0 \in C\cap D.$ 
Let $A= k[x,y]_{(x,y)}$ denote the local ring at the origin.

\begin{theorem} \label{7.2} With the notion of Section 7 it follows 
\[
e(f,g;A) = c\cdot d + e(f_1,g_1;A[\mathfrak{m}/x]) + e(f_2,g_2;A[\mathfrak{m}/y]) - e(f_1,g_1;A[\mathfrak{m}/x,\mathfrak{m}/y]),
\]
where $f_i,g_i, i = 1,2,$ denote the strict transform of $f,g$ on $A[\mathfrak{m}/x]$ and 
$A[\mathfrak{m}/y]$ respectively. 
\end{theorem}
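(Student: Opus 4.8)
The plan is to read off the equality by specializing Theorem \ref{3.4}~(b) together with the blowing-up formula of Theorem \ref{7.1} to the case $A = k[x,y]_{(x,y)}$, $\mathfrak{q} = \mathfrak{m} = (x,y)A$, and $\underline{a} = f,g$. Since $A$ is a two-dimensional regular local ring, its form ring is the polynomial ring $G_A(\mathfrak{m}) = k[X,Y]$ and $e(\mathfrak{m};A) = 1$; the two initial degrees appearing in Theorem \ref{3.4} are here precisely the orders $c$ and $d$ of $f$ and $g$, and $e(f,g;A) = \ell_A(A/(f,g)A)$ is the local intersection multiplicity. Thus Theorem \ref{3.4}~(b) reads
\[
e(f,g;A) = c\cdot d + \chi(f,g,\mathfrak{m}),
\]
and the whole problem reduces to identifying the Euler characteristic $\chi(f,g,\mathfrak{m})$ with the alternating sum of the three correction multiplicities.

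To evaluate $\chi(f,g,\mathfrak{m})$ in the generic situation I would apply Theorem \ref{7.1} with $s = 2$, $q_1 = x$, $q_2 = y$. Its hypotheses are then in force: $A$ is regular, hence unmixed, and when $f^{\star},g^{\star}$ share a common factor one has $\dim k[X,Y]/(f^{\star},g^{\star}) = 1$ (here $f^{\star}\neq 0$ forces the dimension to be $0$ or $1$). For $s = 2$ the double sum of Theorem \ref{7.1} has exactly three summands --- the singletons $\{1\},\{2\}$ with positive sign and the pair $\{1,2\}$ with negative sign --- so that
\[
\chi(f,g,\mathfrak{m}) = e(f_1,g_1;A[\mathfrak{m}/x]) + e(f_2,g_2;A[\mathfrak{m}/y]) - e(f_1,g_1;A[\mathfrak{m}/x,\mathfrak{m}/y]).
\]
On the double localization I would use Remark \ref{7.0c} to identify the two a priori different strict transforms, namely $(f_1,g_1)A[\mathfrak{m}/x,\mathfrak{m}/y] = (f_2,g_2)A[\mathfrak{m}/x,\mathfrak{m}/y]$, so that the minus term is written unambiguously as $e(f_1,g_1;A[\mathfrak{m}/x,\mathfrak{m}/y])$. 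Substituting this expression into the previous display produces exactly the asserted formula.

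The point needing the most care is the borderline transversal case, where $f^{\star},g^{\star}$ are coprime and hence a homogeneous system of parameters in $k[X,Y]$, so that $\dim k[X,Y]/(f^{\star},g^{\star}) = 0$ and Theorem \ref{7.1} no longer applies. I would dispose of it by hand: Theorem \ref{4.2} gives $e(f,g;A) = c\cdot d$, i.e.\ $\chi(f,g,\mathfrak{m}) = 0$, while the strict transforms of $C$ and $D$ meet the exceptional line only in their tangent directions, which are now disjoint, so in each of the three charts the ideal of the strict transforms is the unit ideal and the corresponding multiplicity vanishes in the sense of Remark \ref{7.0a}~(B); both sides then equal $c\cdot d$. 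Since the analytic content has already been established in Theorems \ref{3.4} and \ref{7.1}, the remaining work is essentially organizational, the only genuine subtleties being this case distinction and the reconciliation of the single- and double-chart strict transforms via Remark \ref{7.0c}.
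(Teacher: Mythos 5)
Your proposal is correct and follows essentially the same route as the paper: the paper's proof of Theorem \ref{7.2} likewise reduces to Theorem \ref{7.1} (combined with Theorem \ref{3.4}) after splitting into the cases $\dim G_A(\mathfrak{m})/(f^{\star},g^{\star}) = 1$ and the transversal case, where $e(f,g;A)=c\cdot d$ and all correction multiplicities vanish. Your write-up is in fact more careful than the paper's (which even contains a typo, writing ``$=2$'' for the transversal case $\dim G_A(\mathfrak{m})/(f^{\star},g^{\star})=0$), making explicit the role of $e(\mathfrak{m};A)=1$, the three summands for $s=2$, and the identification of the two strict transforms on the double chart via Remark \ref{7.0c}.
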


\begin{proof} 
The proof is an immediate consequence of Theorem \ref{7.1}. Clearly $A$ as a regular local 
ring is quasi-unmixed. Moreover $G_A(\mathfrak{m}) = k[X,Y]$ and $0 \leq \dim G_A(\mathfrak{m})/(f^{\star},g^{\star}) \leq 1$. 
In the case of  $\dim G_A(\mathfrak{m})/(f^{\star},g^{\star}) = 2$ we have $e(f,g;A)= c\cdot d$ and all of the other 
multiplicities are zero. The case of  $\dim G_A(\mathfrak{m})/(f^{\star},g^{\star}) = 1$ is covered by 
Theorem \ref{7.1}. 
\end{proof}

The formula shown in Theorem \ref{7.2}  provides a correction  to the formula \cite[3.21]{GLS}. See also 
the discussion in the next Section. 

\section{Examples and a second geometric application} 
Let $C =V(f), D =V(g) \subset \mathbb{A}^2_k$ be the two plane curves defined by 
\[
f = x^3+y^3-3xy, g = x^2+y^2-2ax \in k[x,y],
\] 
where $a \in k$ is a constant. Here $k$ is an algebraically closed field. The curve 
$C$ is the "folium cartesium", while the curve $D$ is the circle with center $(a,0)$ and 
radius $a$ (see Figure \ref{folium}). We have that $(0,0) \in C \cap D.$ We consider the local intersection at the 
origin. To this end put $A = k[x,y]_{(x,y)}.$ Because $C$ and $D$ do not have a component 
in common $f,g$ is a system of parameters of $A.$ 
\begin{figure*}
	\centering
	\includegraphics[width=0.5\linewidth]{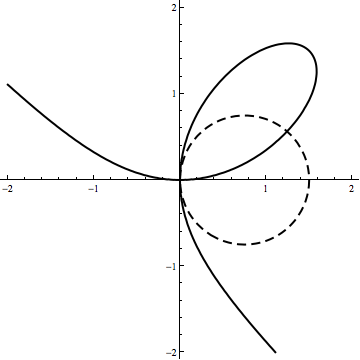}
	\caption{The intersection of "folium cartesium" with the circle for $a = 3/2.$}
	\label{folium}
\end{figure*}
First there is the computation of the local multiplicity $e(f,g;A)$ of $(0,0) \in C \cap D$.

\begin{proposition} \label{8.1} With the previous notation the multiplicity is given by 
\begin{equation*}
e(f,g;A) = 
\begin{cases} 
3 & \text{if $a \not= 0$ and $a \not= 3/2,$}\\
4 & \text{if $a = 0,$}  \\
5 & \text{if $a =3/2.$}
\end{cases}
\end{equation*}
\end{proposition}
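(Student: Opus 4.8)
The plan is to read off the initial forms of $f$ and $g$, dispose of the transversal case by Theorem \ref{4.2}, and extract the two remaining values from the blowing-up formula of Theorem \ref{7.2}. First I would record the local data. The lowest-degree part of $f = x^3+y^3-3xy$ is $-3xy$, so $f^{\star} = -3xy$ and $c = 2$. For $g = x^2+y^2-2ax$ the picture depends on $a$: if $a\neq 0$ then $g^{\star} = -2ax$ and $d=1$, whereas if $a=0$ then $g = x^2+y^2$, so $g^{\star} = x^2+y^2$ and $d=2$. In the case $a=0$ the forms $f^{\star} = -3XY$ and $g^{\star} = X^2+Y^2$ have no common factor in $k[X,Y]$, hence form a homogeneous system of parameters (equivalently $\dim G_A(\mathfrak{m})/(f^{\star},g^{\star}) = 0$), and Theorem \ref{4.2} gives $e(f,g;A) = c\cdot d = 4$ at once.

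Now suppose $a\neq 0$. Then $(f^{\star},g^{\star}) = (XY,X) = (X)$, so $\dim G_A(\mathfrak{m})/(f^{\star},g^{\star}) = 1$ and the $y$-axis $X=0$ is a common tangent; this is exactly the hypothesis of Theorem \ref{7.2}. I would apply it in the two charts $A[\mathfrak{m}/x]$ (coordinates $x$, $v = y/x$) and $A[\mathfrak{m}/y]$ (coordinates $y$, $u = x/y$). Dividing out the initial degrees $x^2$ from $f$ and $x$ from $g$ yields the strict transforms; in $A[\mathfrak{m}/x]$ one finds $g_1 = x(1+v^2) - 2a$, which reduces to $-2a\neq 0$ on the exceptional divisor $x=0$. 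Hence $f_1,g_1$ have no common zero in this chart, and the same observation kills the overlap term, so $e(f_1,g_1;A[\mathfrak{m}/x]) = 0$ and $e(f_1,g_1;A[\mathfrak{m}/x,\mathfrak{m}/y]) = 0$. All the content therefore sits in $A[\mathfrak{m}/y]$, where $f_2 = y(u^3+1) - 3u$ and $g_2 = y(u^2+1) - 2au$ have the linear initial forms $y-3u$ and $y-2au$, whose common zero on the exceptional divisor is the single point $u=0$.

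The split between the generic value and the special value $a=3/2$ occurs here, and this is the step I expect to be the main obstacle. When $a\neq 3/2$ the lines $y=3u$ and $y=2au$ are distinct, hence transversal, so $e(f_2,g_2;A[\mathfrak{m}/y]) = 1$ and Theorem \ref{7.2} gives $e(f,g;A) = 2+0+1-0 = 3$. When $a=3/2$ the two initial forms coincide, the tangent cone of the pair becomes non-reduced, and the multiplicity is no longer visible at the linear level, so the formula only reduces the problem to a sub-computation with a persisting common tangent.

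I would settle the remaining case by a direct length computation in $A[\mathfrak{m}/y]$ with $a=3/2$. Since $f_2-g_2 = yu^2(u-1)$ and $u-1$ is a unit at the origin of the chart, one has $(f_2,g_2) = (g_2, yu^2) = (y-3u,\,yu^2) = (y-3u,\,u^3)$, so that $A[\mathfrak{m}/y]/(f_2,g_2)\cong k[u]/(u^3)$ is of length $3$. Thus $e(f_2,g_2;A[\mathfrak{m}/y]) = 3$ and $e(f,g;A) = 2+3 = 5$, which completes all three cases. As an independent check one may instead parametrize the smooth branch of the circle $g=0$ (of the form $x = \tfrac{1}{2a}y^2+\cdots$ for $a\neq 0$) and read off the order of vanishing of $f$ along it, recovering the same trichotomy $3,4,5$.
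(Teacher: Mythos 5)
Your proof is correct, but it takes a genuinely different route from the paper's. The paper proves Proposition \ref{8.1} by bare-hands elimination: since $f' := f - yg = x^3-xy((3-2a)+x)$ generates together with $g$ the same ideal as $(f,g)$, one has $e(f,g;A)=\ell_A(A/(f',g))$, and factoring $f'$ (for instance $f'=x^2(x-y)$ when $a=3/2$) reduces everything to short length counts; no machinery from the rest of the paper is used. You instead invoke the paper's own theorems: Theorem \ref{4.2} for $a=0$ (where $f^{\star}=-3XY$ and $g^{\star}=X^2+Y^2$ are coprime, hence a homogeneous system of parameters) and the blow-up formula of Theorem \ref{7.2} for $a\neq 0$ (where $(f^{\star},g^{\star})=(X)$, so $\dim G_A(\mathfrak{m})/(f^{\star},g^{\star})=1$). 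There is no circularity in this, since Theorem \ref{7.2} rests on the general Theorem \ref{7.1} and not on the examples; moreover your chart computations ($e(f_2,g_2;A[\mathfrak{m}/y])=1$ resp.\ $3$, the other two terms vanishing because $g_1\equiv -2a$ on the exceptional divisor) are exactly the ones the paper performs afterwards in Proposition \ref{8.2}, so in effect you establish Propositions \ref{8.1} and \ref{8.2} simultaneously, in the reverse logical order (the paper instead extracts $\ell$ in that table from the already-known multiplicity via Theorem \ref{6.1}). The paper's route is more elementary and lets the example serve as an independent test of the theory; yours demonstrates that the blow-up formula genuinely computes. One point you should make explicit: the reason you may ignore common zeros of the strict transforms off the exceptional divisor --- both when concluding $e(f_1,g_1;A[\mathfrak{m}/x])=e(f_1,g_1;A[\mathfrak{m}/x,\mathfrak{m}/y])=0$ from $g_1\equiv-2a$ on $x=0$, and when identifying $A[\mathfrak{m}/y]/(f_2,g_2)$ with $k[u]/(u^3)$ --- is that any maximal ideal of a chart containing the strict transforms contracts to a prime of $A$ containing the $\mathfrak{m}$-primary ideal $(f,g)$, hence contains $\mathfrak{m}$ and therefore $x$ resp.\ $y$; with that observation recorded, the multiplicities in the sense of Remark \ref{7.0a} (B) are supported at the single point where you compute, and all three values follow.
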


\begin{proof} It is easy to see that $C \cap D = V(f',g),$ where $f' = x^3-xy((3-2a)+x).$ Then 
the result follows by some simple calculations. Note that for $a = 0$ the curve $D$ consists of 
one (real) point. But it is the union of two (conjugate) complex lines intersecting in the origin. 
\end{proof}

Finally let us summarize the correcting terms of the Bezout inequality as introduced  
in Theorem \ref{6.1} and Theorem \ref{7.2}. In respect to the definitions 
of Theorem \ref{7.2} we put 
\[
e_1 = e(f_1,g_1;A[x/y]), \; e_2 = e(f_2,g_2;A[y/x]), \; e_3 = e(f_1,g_1;A[x/y,y/x])
\]
where $f_i,g_i, i = 1,2,$ denote the corresponding strict transforms. 
Then we get the following result. 

\begin{proposition} \label{8.2} For the Bezout numbers of the intersection of the "folium 
cartesium" with the circle we have:
\[
\begin{array}{|c|cc|cc|ccc|}
\hline 
e & c & d & t & \ell & e_1 & e_2 &e_3 \\
\hline 
3 & 2 & 1 & 1 &  0      & 1 & 0 & 0 \\
4 & 2 & 2 & 0 & 0       & 0 & 0 & 0\\ 
5 & 2 & 1 & 1 & 2       & 3 & 0 & 0\\
\hline
\end{array}
\]
Here $e$ denotes the multiplicity. Moreover $\ell$ denotes the invariant  introduced in 
Theorem \ref{6.1}. Furthermore $t$ denotes the number of common tangents. 
\end{proposition}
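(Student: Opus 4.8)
The plan is to verify the three rows of the table by computing each invariant directly and then checking that the two master formulas, Theorem \ref{6.1} and Theorem \ref{7.2}, hold. First I would read off the local data from the defining equations. The lowest-degree part of $f = x^3+y^3-3xy$ is $-3xy$, so $c = 2$ and $f^\star = -3xy$; its two tangent lines at the origin are the coordinate axes. For $g = x^2+y^2-2ax$ the initial form depends on $a$: if $a \neq 0$ then $d = 1$ and $g^\star = -2ax$, whereas if $a = 0$ then $d = 2$ and $g^\star = x^2+y^2$. The number of common tangents $t$ is the degree of the common factor of $f^\star$ and $g^\star$ in $k[X,Y]$; for $a \neq 0$ the shared factor is $X$, so $t = 1$, while for $a = 0$ the forms $-3XY$ and $X^2+Y^2$ are coprime, so $t = 0$. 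The multiplicity $e$ is supplied by Proposition \ref{8.1}.

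The transversal case $a = 0$ is then immediate. Here $f^\star = -3XY$ and $g^\star = X^2+Y^2$ have the origin as their only common zero, hence form a homogeneous system of parameters in $G_A(\mathfrak{m}) = k[X,Y]$. By Theorem \ref{4.2} this forces $e = c\cdot d = 4$ with all correction terms vanishing, giving $t = \ell = e_1 = e_2 = e_3 = 0$, which is the middle row.

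For $a \neq 0$ the single common tangent is the line $x = 0$, and in the blowing-up this direction is visible only in the chart $A[x/y]$, at the point $u = 0$ of the exceptional divisor, where $u = x/y$. I would compute the strict transforms chart by chart by substituting $x = uy$ (respectively $y = vx$) and dividing by the appropriate power of the exceptional parameter. In $A[x/y]$ this yields $f_1 = y(u^3+1) - 3u$ and $g_1 = y(u^2+1) - 2au$, both vanishing at $(u,y) = (0,0)$. In $A[y/x]$ and in the overlap $A[x/y,y/x]$, however, the strict transform of $g$ restricts on the exceptional divisor to the nonzero constant $-2a$, so it is a unit there and no maximal ideal lying over the origin contains both strict transforms; hence $e_2 = e_3 = 0$. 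It then remains to evaluate $e_1 = e(f_1,g_1;A[x/y])$, which is the local intersection multiplicity at the origin of this chart.

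This last multiplicity is the crux, and it is where the subcases $a \neq 3/2$ and $a = 3/2$ split. The linear parts of $f_1$ and $g_1$ are $-3u + y$ and $-2au + y$, which are independent precisely when $a \neq 3/2$; in that case the curves meet transversally and $e_1 = 1$. When $a = 3/2$ the two linear parts coincide and the contact is tangential, so I would argue directly: one has $f_1 - g_1 = yu^2(u-1)$, whence $(f_1,g_1) = (g_1, yu^2)$ after discarding the unit $u-1$, and solving $g_1 = 0$ for $y = 3u/(u^2+1)$ reduces $yu^2$ to a unit times $u^3$. Thus the local ring at $(u,y) = (0,0)$ modulo $(f_1,g_1)$ is isomorphic to $k[u]_{(u)}/(u^3)$, of length $3$, so $e_1 = 3$. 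Finally I would read off $\ell = e - c\cdot d - t$ from Theorem \ref{6.1} (obtaining $0,0,2$) and confirm that Theorem \ref{7.2}, namely $e = c\cdot d + e_1 + e_2 - e_3$, holds in every row. The main obstacle is this direct length computation of $e_1$ in the tangential case $a = 3/2$; all remaining entries are forced once $c,d,t,e$ and the vanishing of $e_2,e_3$ are in hand.
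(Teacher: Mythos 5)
Your proposal is correct and follows essentially the same route as the paper: read off $c,d,t$ from the initial forms, take $e$ from Proposition \ref{8.1}, deduce $\ell$ from Theorem \ref{6.1}, and compute $e_1,e_2,e_3$ from the strict transforms $f_1 = y(u^3+1)-3u$, $g_1 = y(u^2+1)-2au$ and $f_2 = x+xT^3-3T$, $g_2 = x+xT^2-2a$ in the two charts. In fact you supply the details the paper leaves as ``an easy calculation,'' namely the transversality argument giving $e_1=1$ for $a\neq 0,3/2$, the colength computation $k[u]_{(u)}/(u^3)$ giving $e_1=3$ for $a=3/2$, and the unit argument on the exceptional divisor giving $e_2=e_3=0$.
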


\begin{proof} The multiplicity is computed in Proposition \ref{8.1}. It is obvious to verify 
the numbers $c,d,$ the initial degrees. In all the cases with multiplicity different from 
$4$ we have $t = 1.$  Therefore we have the value of $\ell$ by Theorem \ref{6.1}. 

In order to complete the table we have to calculate $e_i, i = 1,2.$ In the case of $a = 0$ 
there is nothing more to calculate.   So let us  assume that $a \not= 0.$ 
Because of $A[x/y] \simeq A[S]/(yS-x)$ it follows that 
\[
f_1 = yS^3+y-3S, \, g_1 = yS^2+y-2aS. 
\]
Then an easy calculation shows that $e_1 = 1$ provided $a \not= 0$ and $a \not= 3/2,$ 
while $e_1 = 3$ if $a = 3/2.$ 
Furthermore $A[y/x] \simeq A[T]/(xT-y)$ and 
\[
f_2 = x + xT^3-3T, \, g_2 =  x+ xT^2 -2a. 
\]
So that in all cases $e_2 = e_3 = 0.$
\end{proof}

In the following we shall investigate the dependence of $e(\underline{\tilde{a}};
A[\mathfrak{q}/q_{j_1},\ldots,\mathfrak{q}/q_{j_i}])$ on the particular choice of the basis 
$\mathfrak{q} = (q_1,\ldots,q_s)A$. We will investigate the most simple case, namely the geometric
situation with $A = k[x,y]_{(x,y)}$ as it was considered in Section 7. 

Let $C_i = V(F_i), D_i =V(G_i) \subset \mathbb{A}_k^2, i = 1,2$ the pair of two plane cubic curves defined by 
\[
\begin{array}{ll}
F_1 = x^3 - (x^2 - y^2), & G_1 = y^3 - (y^2 - x^2),\\ 
F_2 = (x + y)^3 - 4xy, & G_2 = (x - y)^3 + 4xy,
\end{array} 
\]
see Figure \ref{cubics}.
We have that $(0,0) \in C_i \cap D_i, i = 1,2$.  We consider the local intersection at the 
origin and  put $A = k[x,y]_{(x,y)}$.

\begin{figure}
        \centering
        \begin{subfigure}[b]{0.45\textwidth}
                \centering
                \includegraphics[width=\textwidth]{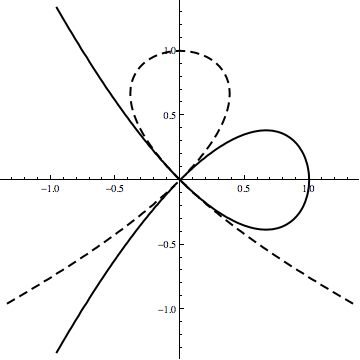}
                \caption{$C_1\cap D_1$}
                \label{cubic1}
        \end{subfigure}
~
        \begin{subfigure}[b]{0.45\textwidth}
                \centering
                \includegraphics[width=\textwidth]{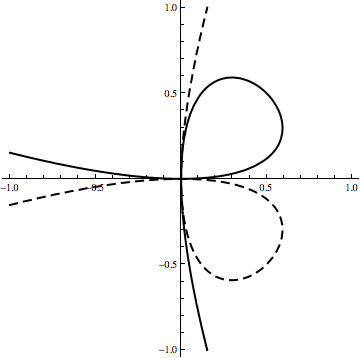}
                \caption{$C_2\cap D_2$}
                \label{cubic2}
        \end{subfigure}
        \caption{The intersection \ref{cubic2} is obtained by an affine transformation of the intersection \ref{cubic1}}\label{cubics}
\end{figure}

It comes out that the multiplicities of the blowing ups depend on the embedding in the affine space $\mathbb{A}_k^2$. 
That is, they depend on the particular choice of the basis of $\mathfrak{q}$. We illustrate this by the above 
examples. We put 
\[
e_1 = e(f_1,g_1;A[x/y]), \; e_2 = e(f_2,g_2;A[y/x]), \; e_3 = e(f_1,g_1;A[x/y,y/x])
\]
where $f_i,g_i, i = 1,2,$ denote the  strict transforms on $A[x/y]$ resp. $A[y/x]$. 
Then we get the following values:
\[
\begin{array}{|c|c|cc|ccc|}
\hline 
i & e & c & d &  e_1 & e_2 &e_3 \\
\hline 
1 & 7 & 2 & 2 &  3  & 3 & 3\\
2 & 7 & 2 & 2 & 2  & 1 & 0\\ 
\hline
\end{array}
\]
It is worth to remark that one can calculate the $e_i, i =1,2,3,$ easily 
with the aid of the Computer Algebra System {\sc Singular} (see \cite{DGPS}) also in more complicated 
examples.

\begin{remark} \label{7.3} In \cite[Proposition 3.21]{GLS} the authors claim a formula similar to those 
of Theorem \ref{7.2}. In their formula they get $e(\underline{a};A) = c\cdot d + e(f_1,g_1;A[\mathfrak{m}/x]) 
+ e(f_2,g_2;A[\mathfrak{m}/y])$. Here we use our notation.  In respect to the situation $C_1\cap D_1$ above we obtain an example 
with $e(f_1,g_1;A[\mathfrak{m}/x,\mathfrak{m}/y]) \not= 0$. In the proof of their result they assume 
that $g$ has as a tangent the $x$-axis. As the above examples show the  multiplicities of 
the blowing ups depend upon the concrete embedding.
\end{remark}

We conclude with a result on the vanishing of $e(f_1,g_1;A[\mathfrak{m}/x,\mathfrak{m}/y])$.

\begin{theorem} \label{7.4} Let $f,g \in k[x,y]$ denote the defining equations of two plane 
curves $C,D \subset \mathbb{A}^2_k$. We fix the notation of Section 7.
\begin{itemize}
\item[(a)] $e(f,g;A) = c\cdot d$ if and only if $e(f_1,g_1;A[x/y]) = e(f_2,g_2;A[y/x]) = 0$. That is, if and only if $C$ and $D$ 
intersect transversally in $(0,0)$.
\item[(b)] Assume that $C$ and $D$ do not intersect transversally in $(0,0)$. Then 
\[
e(f,g;A) \leq c\cdot d + e(f_1,g_1;A[x/y]) + e(f_2,g_2;A[y/x]).
\] 
Equality holds if and only if 
$C$ and $D$ have a coordinate axis as a common tangent in the origin.
\end{itemize}
\end{theorem}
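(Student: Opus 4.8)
The plan is to read both parts off the exact formula of Theorem \ref{7.2}. Writing, as in this section, $e_1 = e(f_1,g_1;A[x/y])$, $e_2 = e(f_2,g_2;A[y/x])$ and $e_3 = e(f_1,g_1;A[x/y,y/x])$, that theorem gives
\[
e(f,g;A) = c\cdot d + e_1 + e_2 - e_3 .
\]
Since each of $e_1,e_2,e_3$ is a multiplicity it is a non-negative integer, and the whole argument rests on two observations about them. First, $A[x/y,y/x]$ is the localization of $A[x/y]$ obtained by inverting $y/x$, and equally the localization of $A[y/x]$ obtained by inverting $x/y$. As localization only discards maximal ideals, while each surviving maximal ideal keeps the same local ring, I get $e_3 \le e_1$ and $e_3 \le e_2$; moreover $e_1 - e_3$ is precisely the contribution of the maximal ideals lying over $x/y = 0$, and $e_2 - e_3$ that of the maximal ideals over $y/x = 0$.

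The second observation is the geometric dictionary. Over the closed point of $A$ the two charts $\Spec A[x/y]$ and $\Spec A[y/x]$ cover the exceptional divisor of the blowing up, and the common zeros of the strict transforms there sit exactly at the common tangent directions of $C$ and $D$. In the coordinate $x/y$ the value $x/y = 0$ is the direction of the $y$-axis, while the $x$-axis direction is the point missing from this chart; symmetrically for $y/x$. Hence $\Spec A[x/y,y/x]$ meets the exceptional divisor in all tangent directions except the two coordinate axes, so that $e_i > 0$ detects a common tangent lying in the $i$-th chart, whereas $e_3 > 0$ detects a common tangent that is not a coordinate axis.

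For part (a), the displayed formula shows $e(f,g;A) = c\cdot d$ if and only if $e_1 + e_2 - e_3 = 0$. By the first observation $e_1 + e_2 - e_3 \ge \max(e_1,e_2) \ge 0$, so this forces $e_1 = e_2 = 0$ (and then $e_3 = 0$); conversely $e_1 = e_2 = 0$ gives the equality at once. By the dictionary $e_1 = e_2 = 0$ means the strict transforms of $C$ and $D$ have no common point over the origin in either chart, i.e. $C$ and $D$ share no tangent direction; equivalently $f^{\star}, g^{\star}$ form a homogeneous system of parameters in $k[X,Y]$, which is transversality.

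For part (b), non-negativity of $e_3$ turns the formula into $e(f,g;A) \le c\cdot d + e_1 + e_2$, with equality exactly when $e_3 = 0$. By the dictionary $e_3 = 0$ means $C$ and $D$ have no common tangent other than a coordinate axis; since we assume $C$ and $D$ are not transversal they do have a common tangent, so $e_3 = 0$ holds precisely when that common tangent is a coordinate axis. I expect the main obstacle to be making the geometric dictionary of the second paragraph precise — identifying the finitely many maximal ideals of $A[x/y]$, $A[y/x]$ and $A[x/y,y/x]$ lying over the origin with tangent directions, and pinning down which coordinate axis is excluded from which chart and from the overlap. Once that bookkeeping is in place, together with the localization inequalities $e_3 \le e_1, e_2$, both parts follow formally from Theorem \ref{7.2}.
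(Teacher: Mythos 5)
Your proof is correct, and for part (b) it is essentially the paper's own argument: the inequality and its equality case come from the formula of Theorem \ref{7.2} together with $e_3 \geq 0$, and the condition $e_3 = 0$ is identified with the absence of common tangent directions other than the coordinate axes --- the paper phrases this algebraically, showing that $(f_1,g_1)A[x/y,y/x]$ is the unit ideal iff $(G_A(\mathfrak{m})/(f^{\star},g^{\star}))_{xTyT} = 0$ iff $XY \in \Rad((h)k[X,Y])$, where $h$ is the common factor of $f^{\star},g^{\star}$; that radical condition is exactly your dictionary. For part (a), however, you take a genuinely different route. The paper does not use Theorem \ref{7.2} there: it invokes Theorem \ref{4.1} (which rests on Rees' multiplicity theorem) to get $e(f,g;A) = c\cdot d$ iff $f^{\star},g^{\star}$ is a system of parameters of $k[X,Y]$, i.e.\ $\Proj k[X,Y]/(f^{\star},g^{\star}) = \emptyset$, and then translates emptiness of the $\Proj$ into $(f_i,g_i)$ generating the unit ideal in the two charts. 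You instead stay entirely inside the blow-up formula and add the localization inequalities $e_3 \leq e_1$ and $e_3 \leq e_2$, which appear nowhere in the paper; these give $e_1+e_2-e_3 \geq \max(e_1,e_2)$ and so force $e_1=e_2=0$. This is a clean, more self-contained deduction given Theorem \ref{7.2}, at the price of having to justify the localization step. Two points to tighten: (i) the assertion that ``localization only discards maximal ideals'' is false for general rings (a non-maximal prime can become maximal after localizing); it is legitimate here because $A[x/y]/(f_1,g_1)$ has finite length (Lemma \ref{7.0b}, or trivially when the ideal is the unit ideal), so every prime containing $(f_1,g_1)$ is already maximal and survives localization with the same local ring, exhibiting $e_3$ as a partial sum of $e_1$ (resp.\ of $e_2$); (ii) in (b), what your dictionary (and the paper's condition $XY \in \Rad((h)k[X,Y])$) actually characterizes is that \emph{every} common tangent is a coordinate axis, not merely that some coordinate axis is a common tangent --- your closing sentence, like the paper's own phrasing, silently assumes the common tangent is unique.
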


\begin{proof} First let us prove the statement in (a). By view of \ref{4.1} it is known that 
$e(f,g;A) = c\cdot d$ if and only if $f^{\star}, g^{\star}$ is a system of parameters in 
$G_A(\mathfrak{m}) = k[X,Y]$. In other words $\Proj G_A(\mathfrak{m})/(f^{\star},g^{\star}) = 
\emptyset$. The last statement is equivalent to the fact that $(f_i,g_i), i= 1,2,$ generates 
the unit ideal on $A[x/y]$ and $A[y/x]$ respectively. This is easily seen equivalent to the 
vanishing of the multiplicities in the statement (a). 

In order to prove (b) we may assume $G_A(\mathfrak{m})/(f^{\star}, g^{\star}) = k[X,Y]/(f^{\star}, g^{\star})$ 
is of dimension one. Then $f^{\star}, g^{\star}$ have a common factor $h$ and 
$f^{\star} = hr, g^{\star} = hs$ for two homogeneous polynomials $r, s \in k[X,Y]$ that are 
relatively prime. The equation $h$ describes the common tangents of $C$ and $D$. 
On the other side $e(f_1,g_1;A[x/y,y/x]) = 0$ if and only if $(f_1,g_1) A[x/y,y/x] = 
(f_2,g_2) A[x/y,y/x]$ is the unit ideal. By the definition of the $\Proj$ this is equivalent 
to $(G_A(\mathfrak{m})/(f^{\star},g^{\star}))_{xTyT} = 0$. This is true if and only if $XY 
\in \Rad ((h) k[X,Y])$. Therefore if and only if one of the axis is a common tangent to 
$C$ and $D$ in the origin $(0,0))$. This finishes the proof of the claim in (b).
\end{proof}

It would be of some interest to find a relation between the statement in Theorem \ref{7.4} 
to Problem \ref{6.2}. 
\smallskip 

\noindent {\bf{Acknowledgement.}} The authors are grateful to the reviewer for a careful reading 
	of the manuscript.

\end{document}